\title{Unitary representations of oligomorphic groups}
\author{Todor Tsankov}
\address{Universit\'e Paris 7 \\ UFR de Math\'ematiques, case 7012  \\ 75205 Paris \textsc{cedex} 13}
\email{todor@math.jussieu.fr}
\subjclass[2010]{Primary 22A25, 20B27; Secondary 03C15}
\keywords{oligomorphic groups, unitary representations, $\omega$-categorical, Roelcke precompact, property (T)}
\thanks{Research partially supported by the ANR network AGORA, NT09-461407.}
\renewcommand{\Q}{{\mathbf Q}}
\newcommand{\Uspenskii}{Uspenski\xspace}
\numberwithin{equation}{section}
\newcounter{ExamplesEnum}
\begin{document}

\begin{abstract}
We obtain a complete classification of the continuous unitary representations of oligomorphic permutation groups (those include the infinite permutation group $S_\infty$, the automorphism group of the countable dense linear order, the homeomorphism group of the Cantor space, etc.). Our main result is that all irreducible representations of such groups are obtained by induction from representations of finite quotients of open subgroups and moreover, every representation is a sum of irreducibles. As an application, we prove that many oligomorphic groups have property (T). We also show that the Gelfand--Raikov theorem holds for topological subgroups of $S_\infty$: for all such groups, continuous irreducible representations separate points in the group.
\end{abstract}

\maketitle

\section{Introduction}
Abstract harmonic analysis is classically restricted to studying representations of locally compact groups, and for a good reason: the Haar measure provides an invaluable tool for constructing and analyzing representations. It gives rise to the left-regular representation (so that every locally compact group has at least one faithful representation) but also allows to define the convolution algebra of the group and various useful topologies on function spaces which are important for understanding the representations. And indeed, many standard theorems of the subject break down for non-locally compact groups: for example, the group of orientation-preserving homeomorphisms of the reals has no non-trivial unitary representations whatsoever (Megrelishvili~\cite{Megrelishvili2001}), while the group of all measurable maps from $[0, 1]$ to the circle has a faithful unitary representation by multiplication on $L^2([0, 1])$ but no irreducible representations (this example is due to Glasner~\cite{Glasner1998a}; see also \cite{Bekka2008}*{Example~C.5.10}). Those rather pathological examples suggest that any attempt to develop a representation theory for non-locally compact groups should be restricted to certain well-behaved classes.

And indeed, a number of interesting classification results have been obtained for the representations of some concrete non-locally compact groups. Lieberman~\cite{Lieberman1972} classified the unitary dual of the infinite symmetric group $S_\infty$ (for us, $S_\infty$ is always the group of \emph{all} permutations of the integers and not only those with finite support). Later, Olshanski developed a rather versatile machine, called the \df{semigroup method}, using which he succeeded to give another proof of Lieberman's theorem \cite{Olshanski1985} and also classify the representations of many other non-locally compact groups including the infinite-dimensional unitary, orthogonal, and symplectic groups, and (a variant of) the infinite-dimensional general linear groups over finite fields \cite{Olshanski1991a} (cf. Remark~\ref{r:Olshanski}). See the survey \cite{Olshanski1991a} for an explanation of the method and the references therein for more examples. A classification for the representations of the unitary group had also been announced by Kirillov~\cite{Kirillov1973}. All of the groups above have only countably many irreducible representations and every representation splits as a sum of irreducibles, a situation that very much resembles the case of compact groups.

In this paper, we study the unitary representations of \df{non-archimedean}, separable groups (i.e. separable groups that admit a countable basis at the identity consisting of open subgroups), also known as subgroups of $S_\infty$. This property alone allows us to recover one important result valid in the locally compact situation, namely the Gelfand--Raikov theorem. More precisely, we prove the following.
\begin{theorem} \label{th:GelfRaikov}
Let $G$ be a topological subgroup of $S_\infty$. Then for every $x, y \in G$, $x \neq y$, there exists a continuous, irreducible, unitary representation $\pi$ of $G$ such that $\pi(x) \neq \pi(y)$.
\end{theorem}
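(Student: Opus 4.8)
Fix $x\neq y$ and put $g=x^{-1}y\neq e$; since $\pi(x)=\pi(y)$ would force $\pi(g)=\mathrm{id}$, it suffices to produce a continuous irreducible unitary representation $\pi$ of $G$ with $\pi(g)\neq\mathrm{id}$. As $G$ is non-archimedean, I would fix an open subgroup $V\leq G$ with $g\notin V$. Then the indicator function $\mathbf 1_V$ is a continuous positive-definite function on $G$ --- it is the diagonal matrix coefficient of the quasi-regular representation of $G$ on $\ell^2(G/V)$ associated to the vector $\delta_V$ --- and $\mathbf 1_V(e)=1\neq 0=\mathbf 1_V(g)$. The goal is then to extract an irreducible representation out of this coefficient, in the spirit of the Gelfand--Raikov argument, but without the Haar measure that the classical proof relies on.

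The substitute for the weak-$*$ compactness of a ball of $L^\infty(G)$ is the following. Let $\mathcal P$ be the set of all \emph{right-$V$-invariant} positive-definite functions $\phi$ on $G$ with $\phi(e)\leq 1$. Every member of $\mathcal P$ is automatically continuous, being constant on each of the open cosets $hV$; moreover $|\phi|\leq\phi(e)\leq 1$ pointwise, so $\mathcal P$ is a closed convex subset of the space of all functions from $G$ to the closed unit disc, with the topology of pointwise convergence, and is therefore compact. Since $\mathbf 1_V\in\mathcal P$ witnesses that $\{\phi\in\mathcal P:\phi(g)=\phi(e)\}$ is a proper closed convex subset of $\mathcal P$, the Krein--Milman theorem yields an extreme point $\psi$ of $\mathcal P$ with $\psi(g)\neq\psi(e)$; then $\psi\neq 0$, and after rescaling by $1/\psi(e)$ (which preserves extremality in $\mathcal P$) I may assume $\psi(e)=1$.

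It remains to see that the GNS representation $(\pi_\psi,\mathcal H_\psi,\xi_\psi)$ attached to $\psi$ is irreducible --- its continuity being the usual consequence of continuity of $\psi$, and $\pi_\psi(g)\xi_\psi\neq\xi_\psi$ being immediate from $\psi(g)\neq\psi(e)=\|\xi_\psi\|^2$. Note that $\xi_\psi$ is fixed by $V$. Given an invariant splitting $\mathcal H_\psi=\mathcal K_1\oplus\mathcal K_2$ with both summands nonzero, let $\xi_i$ be the orthogonal projection of $\xi_\psi$ onto $\mathcal K_i$: each $\xi_i$ is nonzero (otherwise $\xi_\psi$ would generate $\mathcal K_{3-i}$, forcing the splitting to be trivial), is $V$-fixed, and is cyclic in $\mathcal K_i$ (the cyclic subspaces of $\xi_1$ and $\xi_2$ together contain that of $\xi_\psi=\xi_1+\xi_2$, which is all of $\mathcal H_\psi$). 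Setting $t_i=\|\xi_i\|^2$ and $\psi_i(h)=\langle\pi_\psi(h)\xi_i,\xi_i\rangle$, we get $t_1+t_2=1$ and $\psi=t_1(\psi_1/t_1)+t_2(\psi_2/t_2)$ as a nontrivial convex combination in $\mathcal P$; extremality forces $\psi_i=t_i\psi$, whence by uniqueness of the GNS construction there are $G$-equivariant unitaries $U_i\colon\mathcal H_\psi\to\mathcal K_i$ with $U_i\xi_\psi=\xi_i/\sqrt{t_i}$. Then $W\colon v\mapsto(\sqrt{t_1}\,U_1v,\sqrt{t_2}\,U_2v)$ is a bounded $G$-equivariant operator on $\mathcal H_\psi$ fixing the cyclic vector $\xi_\psi$, so $W=\mathrm{id}$; evaluating at $v=\xi_1$ gives $U_2\xi_1=0$, which contradicts injectivity of $U_2$ together with $\xi_1\neq 0$. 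Hence $\pi_\psi$ is irreducible and separates $x$ from $y$.

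I expect the last paragraph to be the crux: the point is to identify the normalized extreme points of $\mathcal P$ with the pure right-$V$-invariant positive-definite functions, i.e. those whose GNS representation is irreducible. The remaining verifications are routine --- in particular, the observation that passing to right-$V$-invariant functions genuinely restores the compactness that is unavailable without a Haar measure is exactly what makes the argument go through for arbitrary topological subgroups of $S_\infty$.
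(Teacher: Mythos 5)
Your argument is correct, and its skeleton is the same as the paper's: reduce to separating $g=x^{-1}y\neq 1$ from the identity, pick an open subgroup $V$ with $g\notin V$, use the coefficient $\mathbf 1_V$ of $\ell^2(G/V)$ as a witness inside a compact convex set of $V$-adapted positive definite functions, and apply Krein--Milman to get an extreme point outside the proper closed convex set $\{\phi:\phi(g)=\phi(1)\}$. Your set $\mathcal P$ is, up to the normalization $\phi(1)\le 1$, the paper's $\mathcal P_V(G)$: for a positive definite function, right-$V$-invariance forces $\phi\equiv\phi(1)$ on $V$ (take $h=1$), and conversely $\phi\equiv\phi(1)$ on $V$ gives a $V$-fixed GNS cyclic vector and hence bi-$V$-invariance (this is Lemma~\ref{l:const-dblcst}); your pointwise-convergence/Tychonoff compactness plays the role of the paper's weak$^*$ compactness in $\ell^\infty(V\backslash G/V)$. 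Where you genuinely diverge is the crux. The paper shows that an extreme point of $\mathcal P_V(G)$ remains extreme in $\mathcal P_1(G)$ (Lemma~\ref{l:extr-points}) and then quotes the standard equivalence between extremality in $\mathcal P_1(G)$ and irreducibility of the GNS representation. You instead prove irreducibility directly from extremality in the constrained set: the projections $\xi_1,\xi_2$ of the cyclic vector onto a hypothetical reducing decomposition are nonzero, cyclic in their summands and $V$-fixed, so $\psi=t_1(\psi_1/t_1)+t_2(\psi_2/t_2)$ is a decomposition \emph{inside} $\mathcal P$; extremality gives $\psi_i=t_i\psi$, and your GNS-uniqueness/intertwiner argument ($W$ is an equivariant isometry fixing the cyclic vector, hence $W=\mathrm{id}$, hence $U_2\xi_1=0$, contradicting unitarity of $U_2$) correctly converts this into a contradiction. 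This makes the proof self-contained (no analogue of Lemma~\ref{l:extr-points} and no appeal to the cited characterization of pure positive definite functions is needed), at the cost of redoing by hand what the paper outsources.

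One small inaccuracy: the parenthetical claim that rescaling by $1/\psi(e)$ preserves extremality in $\mathcal P$ is not the right justification. What is true, and what you should say, is that a nonzero extreme point automatically satisfies $\psi(e)=1$: if $0<\psi(e)<1$, then $\psi=\psi(e)\cdot\bigl(\psi/\psi(e)\bigr)+\bigl(1-\psi(e)\bigr)\cdot 0$ is a nontrivial convex combination of elements of $\mathcal P$. With this observation no rescaling is needed, extremality is retained, and the rest of your argument goes through unchanged.
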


The main body of the paper, however, concentrates on groups that have an additional special property, that of \df{Roelcke precompactness} (cf. Definition~\ref{df:Roelcke}). It turns out that this property has a natural translation in the language of permutation groups and model theory. For us, a \df{permutation group} will be a topological subgroup of the group of all permutations of a countable set $\bX$. The following definition is of central importance for this paper.
\begin{defn} \label{df:olig}
Let $\bX$ be a countable (finite or infinite) set. A permutation group $G \actson \bX$ is called \df{oligomorphic} if the diagonal action $G \actson \bX^n$ has only finitely many orbits for each $n$. A topological group $G$ is \df{oligomorphic} if it can be realized as an oligomorphic permutation group.
\end{defn}

Closed oligomorphic permutation groups also have a model-theoretic interpretation: they are exactly the automorphism groups of $\omega$-categorical structures (cf. Section~\ref{s:olig-groups}). A standard way to produce $\omega$-categorical structures is the \Fraisse construction: given a class of finite structures satisfying a certain amalgamation property, there is a way to build a (unique) infinite, homogeneous structure that contains all structures in the class as substructures. We postpone the formal definitions to Section~\ref{s:olig-groups} and just describe a few examples.

\smallskip \noindent \textbf{Examples of $\omega$-categorical \Fraisse limits:}
\begin{enumerate}  \romanenum
\item \label{i:ex:sinfty} The \Fraisse limit of all finite sets without structure is a countably infinite set. The corresponding group is $S_\infty$, the group of all permutations of this set.
\item \label{i:ex:Q} The \Fraisse limit of all finite linear orders is the countable dense linear order without endpoints $(\Q, <)$. We denote the corresponding automorphism group by $\Aut(\Q)$.
\item \label{i:ex:Cantor} The \Fraisse limit of all finite Boolean algebras is the countable atomless Boolean algebra which is isomorphic to the algebra of all clopen subsets of the Cantor space $2^\N$. The corresponding automorphism group is $\Homeo(2^\N)$, the group of all homeomorphisms of $2^\N$.
\item \label{i:ex:GL} The \Fraisse limit of all finite vector spaces over a fixed finite field $\F_q$ is the infinite-dimensional vector space over $\F_q$. The automorphism group is the general linear group $\GL(\infty, \F_q)$.
\item \label{i:ex:R} The \Fraisse limit of all finite graphs is the \df{random graph}, the unique countable graph $\bR$ such that for every two finite disjoint sets of vertices $U, V$, there exists a vertex $x$ which is connected by an edge to all vertices in $U$ and to no vertices in $V$. We denote its automorphism group by $\Aut(\bR)$.

\setcounter{ExamplesEnum}{\value{enumi}}
\end{enumerate}
There are also many other $\omega$-categorical structures, including, for example, certain groups \cite{Hodges1993}, and a variety of combinatorial structures \cite{Kechris2005}. There is a rather extensive literature devoted to the subject; we refer the interested reader to the volume \cite{Kaye1994}, or the more recent survey Macpherson~\cite{Macpherson2010p} and the references therein. We also indicate some ways to construct new oligomorphic groups from old ones in Proposition~\ref{p:closure}.

The main theorem describing the unitary representations of oligomorphic groups is the following.
\begin{theorem} \label{th:int:main}
Let $G$ be an oligomorphic group. Then every irreducible unitary representation of $G$ is of the form $\Ind_{C(V)}^G(\sigma)$, where $V \leq G$ is an open subgroup, $C(V)$ is the commensurator of $V$, $V \unlhd C(V)$, and $\sigma$ is an irreducible representation of the \emph{finite} group $C(V)/V$. Moreover, every unitary representation of $G$ is a sum of irreducibles.
\end{theorem}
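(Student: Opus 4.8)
The plan is to translate everything into the combinatorics of the spaces $H^V$ of $V$-fixed vectors, $V$ ranging over open subgroups of $G$, exploiting that oligomorphy makes every double coset space $V\backslash G/V$ finite (for a pointwise stabilizer $V=G_{(A)}$, $V\backslash G/V$ is in bijection with the orbits of $G$ on pairs of ``copies of $A$'', of which there are finitely many). The basic mechanism is this: for a continuous representation $\pi\colon G\to U(H)$ and $V$ open, let $p_V$ be the orthogonal projection onto $H^V$; then $\pi(v)p_V=p_V\pi(v)=p_V$ for $v\in V$, so $p_V\pi(g)p_V$ depends only on $VgV$, and hence
\[
\mathcal H(G,V):=p_V\,\pi(G)''\,p_V=\operatorname{span}\{p_V\pi(g)p_V:g\in G\}
\]
is a \emph{finite-dimensional} $\ast$-subalgebra of $B(H^V)$, i.e.\ a finite-dimensional $C^\ast$-algebra, hence semisimple. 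Moreover $\bigcup_V H^V$ is dense in $H$: given $\xi\neq 0$, continuity provides an open $V$ with $\sup_{v\in V}\|\pi(v)\xi-\xi\|<\|\xi\|$, and the (unique) vector of minimal norm in the closed convex hull of $\pi(V)\xi$ is then a nonzero element of $H^V$ by uniqueness and a triangle-inequality estimate.

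For complete reducibility, given $\pi\neq 0$ I would pick an open $V$ with $H^V\neq 0$ and a minimal $\mathcal H(G,V)$-submodule $M\subseteq H^V$ (it exists because $\mathcal H(G,V)$ is finite-dimensional and semisimple), and show that $\mathcal K:=\overline{\operatorname{span}}\,\pi(G)M$ is an irreducible subrepresentation. The point is that $p_V\pi(g)|_{H^V}\in\mathcal H(G,V)$, so $p_V\pi(g)M\subseteq M$, and passing to closures, $\mathcal K^V=M$; if $\mathcal K=\mathcal K_1\oplus\mathcal K_2$ is $G$-invariant then each $\mathcal K_i$ is $\pi(V)''$-invariant, hence $p_V$-invariant, so $M=\mathcal K_1^V\oplus\mathcal K_2^V$ as $\mathcal H(G,V)$-modules; minimality forces, say, $\mathcal K_2^V=0$, whence $M\subseteq\mathcal K_1$ and therefore $\mathcal K\subseteq\mathcal K_1$. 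A Zorn's lemma argument (a maximal orthogonal family of irreducible subrepresentations exhausts $\pi$, its orthocomplement otherwise producing one more) then gives $\pi=\bigoplus_i\pi_i$.

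For the form of the irreducibles, let $\pi$ be irreducible. By Schur, $\pi(G)''=B(H)$, so $\mathcal H(G,V)=B(H^V)$; combined with finite-dimensionality this forces $\dim H^V<\infty$ for every open $V$ with $H^V\neq 0$. The heart of the argument is to single out an open $V$ with $H^V\neq 0$ for which $V\unlhd C(V)$, the finite group $C(V)/V$ acts \emph{irreducibly and faithfully} on $H^V$ (this being the representation $\sigma$), and $\pi\cong\Ind_{C(V)}^G(\sigma)$. I would first record that $C(V)/V$ is always finite: $N_G(V)/V$ embeds into $V\backslash G/V$ via $Vn\mapsto VnV$ and is therefore finite, while $[C(V):N_G(V)]<\infty$ (the commensurable conjugates of an open subgroup of an oligomorphic group are finite in number); so, replacing $V$ by its core in $C(V)$ — an open finite-index subgroup with the same commensurator — one may assume $V\unlhd C(V)$, and $C(V)$ then acts on $H^V$. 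One must then choose $V$ so that this action is irreducible and faithful: the correct $V$ is the stabilizer attached to a suitably ``minimal'' cyclic vector of $\pi$; with this choice $H^V$ realizes a single irreducible $\sigma$ of $C(V)/V$, and Frobenius reciprocity for the open subgroup $C(V)$ gives a canonical $G$-map $\Ind_{C(V)}^G(\sigma)\to H$ which one checks to be, up to a scalar, an isometric isomorphism by verifying $\dim(\Ind_{C(V)}^G\sigma)^W=\dim H^W$ for all open $W$ — a finite count of orbits in $C(V)\backslash G/W$.

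The step I expect to be the main obstacle is precisely this last one: identifying the subgroup $V$ intrinsically attached to $\pi$ and proving that $\pi$ is \emph{genuinely} induced from it. Neither a maximal open subgroup with $H^V\neq 0$ nor an arbitrary open subgroup on which $C(V)$ acts irreducibly will do — for the permutation representation of $S_\infty$ on $\ell^2(\N)$ one must arrive at a one-point stabilizer, not at the setwise stabilizer of a larger finite set (both of which have nonzero fixed vectors and satisfy $V=C(V)$), because the natural induction map from the latter is unbounded. Pinning down the right $V$ and proving the boundedness and isometry of the resulting map is where the finite-index information carried by the commensurator enters, and hence where oligomorphy, as opposed to mere Roelcke precompactness, is genuinely used.
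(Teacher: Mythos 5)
Your preparatory reductions are fine (the corner algebra $p_V\pi(G)''p_V=\mathrm{span}\{p_V\pi(g)p_V\}$ is indeed a finite-dimensional $C^*$-algebra since $p_V\in\pi(V)''$ and $V\backslash G/V$ is finite, and the resulting complete-reducibility argument is correct, by a route genuinely different from the paper's), but the theorem's main content --- that an irreducible $\pi$ is induced from $\mathrm{Comm}_G(V)$ for a suitable open $V$ --- is exactly the step you defer to ``a suitably minimal cyclic vector'' and acknowledge as the main obstacle, and no mechanism is given for it. The paper's solution is the missing idea: among nonzero vectors fixed by some open subgroup, choose $\xi_0$ whose positive definite function $\phi_{\xi_0}(x)=\langle\pi(x)\xi_0,\xi_0\rangle$ takes the \emph{minimum possible number of distinct values} (finite, since $\phi_{\xi_0}$ is constant on the finitely many double cosets of an open subgroup), let $V$ be the stabilizer of $\xi_0$, and prove by a weak-limit-point argument (Lieberman's trick) that $\phi_{\xi_0}(g)=0$ for every $g\notin\mathrm{Comm}_G(V)$: if not, the translates $\pi(h_ig)\xi_0$ along infinitely many left cosets inside $VgV$ have a nonzero weak limit $\eta$ with $\|\eta\|<\|\xi_0\|$ whose positive definite function takes strictly fewer values, contradicting minimality. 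This vanishing is precisely what you need and do not supply: it makes $\mathcal{K}=\mathrm{span}\,\pi(\mathrm{Comm}_G(V))\xi_0$ finite-dimensional, fixed by a finite-index normal subgroup $V'=\bigcap_{h}V^h$, and --- crucially --- makes the translates $\pi(x)\mathcal{K}$ and $\pi(y)\mathcal{K}$ orthogonal for distinct cosets $x\,\mathrm{Comm}_G(V)\neq y\,\mathrm{Comm}_G(V)$, so that the natural map from $\mathrm{Ind}_{\mathrm{Comm}_G(V)}^G(\sigma)$ into $\mathcal{H}(\pi)$ is automatically a well-defined isometry; this is how the unboundedness problem you correctly identify (the $S_\infty$, $\ell^2(\mathbb{N})$ example) is avoided.

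Two further points in your sketch would not stand as written even granted a choice of $V$. First, the ``canonical $G$-map'' from Frobenius reciprocity must be shown bounded before anything else, and boundedness is equivalent to the orthogonality statement above, so it cannot be waved through. Second, the proposed criterion ``$\dim(\mathrm{Ind}_{C(V)}^G\sigma)^W=\dim \mathcal{H}(\pi)^W$ for all open $W$ implies $\pi\cong\mathrm{Ind}_{C(V)}^G\sigma$'' is unjustified: equality of fixed-space dimensions is a consequence of the classification, not an available tool for proving it, so as a proof step it is circular. Note finally that the paper obtains both halves of the theorem in one stroke: it shows every representation contains a subrepresentation unitarily equivalent to some $\mathrm{Ind}_{\mathrm{Comm}_G(V)}^G(\sigma)$ with $\sigma$ irreducible (irreducibility of the induced representation coming from the Mackey-type criterion for commensurators), and then Zorn's lemma gives the direct-sum decomposition; in particular an irreducible representation, containing such a subrepresentation, equals it. Your separation into ``complete reducibility first, classification second'' is legitimate, but without the minimizing-vector/weak-limit argument (or a substitute) the classification half, which is the heart of the theorem, remains unproven.
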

We also provide a criterion when two irreducible representations as above are isomorphic (Proposition~\ref{p:ind-reps}).

As every oligomorphic group has only countably many distinct open subgroups (Corollary~\ref{c:ctbl-open}), the theorem shows that every oligomorphic group has only countably many irreducible representations.

Our methods are quite different from the approach of Olshanski. In particular, his semigroup method only applies when the group is obtained as the completion of an inductive limit of subgroups, which is not the case for many oligomorphic groups (for example, $\Aut(\Q)$). On the other hand, we have borrowed an important idea from Lieberman: the use of a weak limit point in the proof of Theorem~\ref{th:int:main}.

If one is given a realization of a closed oligomorphic group as the automorphism group of a countable structure, it is possible to give a more concrete description of its representations in terms of the structure. For example, for the automorphism group of the random graph, all irreducible representations can be obtained in the following way. One takes a finite (induced) subgraph $\bA \sub \bR$ and sets $V$ to be the pointwise stabilizer of $\bA$. Then $C(V)$ is the setwise stabilizer of $\bA$ and $C(V)/V \cong \Aut(\bA)$. So in this case, the irreducible representations of $\Aut(\bR)$ are obtained by induction from irreducible representations of automorphism groups of finite graphs (and in fact, this correspondence is one-to-one if one makes the obvious identifications). See Section~\ref{s:examples} for more details.

As a corollary of the classification of the representations of $\Aut(\Q)$, we obtain that the group $\Homeo^+(\R)$ has no non-trivial unitary representations (this is a special case of a result of Megrelishvili), cf. Corollary~\ref{c:HomeoR}.

As a further application, we establish property (T) for a large class of oligomorphic groups. Our technique is quite similar to the one used by Bekka~\cite{Bekka2003} to prove that the unitary group has property (T) (for which he used Kirillov and Olshanski's classification of its representations).
\begin{theorem}
All of the examples \eqref{i:ex:sinfty}--\eqref{i:ex:R} above have property (T).
\end{theorem}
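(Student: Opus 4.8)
\emph{Proof plan.} Following the scheme of Bekka's proof of property~(T) for the unitary group, the idea is to combine Theorem~\ref{th:int:main} with a uniform spectral-gap estimate for quasi-regular representations. The key reduction is that it suffices, for each such $G$, to produce a \emph{finite} set $Q \subseteq G$ and a constant $c > 0$ with
\begin{equation*}
  \sum_{g \in Q} \|g\xi - \xi\|^2 \;\geq\; c\,\|\xi\|^2
  \qquad \text{for every open subgroup } V \lneq G \text{ and every } \xi \in \ell^2(G/V),
\end{equation*}
where $G$ acts on $\ell^2(G/V)$ by the quasi-regular representation. Granting this, write an arbitrary unitary representation as $\pi = \bigoplus_i \pi_i$ with each $\pi_i$ irreducible (Theorem~\ref{th:int:main}) and let $\xi = (\xi_i)_i$ be a unit vector orthogonal to the space of $G$-invariant vectors; that space is exactly the sum of the trivial summands, so we may assume every $\pi_i$ is non-trivial. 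By Theorem~\ref{th:int:main}, $\pi_i \cong \Ind_{C(V_i)}^G \sigma_i$, and since $\sigma_i$ embeds into the regular representation of the finite group $C(V_i)/V_i$ and induction is transitive, $\pi_i$ embeds as a subrepresentation of $\Ind_{V_i}^G \mathbf 1 = \ell^2(G/V_i)$, with $V_i \lneq G$ because $\pi_i \neq \mathbf 1$. Applying the estimate to each $\xi_i$ and summing over $i$ (the two sums being over non-negative terms) gives $\sum_{g \in Q}\|\pi(g)\xi - \xi\|^2 \ge c\sum_i\|\xi_i\|^2 = c$, whence $\max_{g \in Q}\|\pi(g)\xi - \xi\| \ge \sqrt{c/|Q|}$ and $(Q, \sqrt{c/|Q|})$ is a Kazhdan pair. (One must also know that $[G : V_i] = \infty$, since otherwise $\ell^2(G/V_i)$ has non-zero invariant vectors and the estimate fails; but none of \eqref{i:ex:sinfty}--\eqref{i:ex:R} has a proper open subgroup of finite index. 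Note that this step already shows that Theorem~\ref{th:int:main} by itself cannot yield property~(T): some input on the actions $G \actson G/V$ is indispensable — and indeed there are oligomorphic groups without property~(T).)

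To obtain the estimate I would look for a \emph{finitely generated, non-amenable} subgroup $\Gamma \le G$ whose action on the underlying set $\bX$ of the structure has amenable point stabilizers. Since every proper open subgroup of $G$ lies between the pointwise and the setwise stabilizer of a finite substructure, the $\Gamma$-stabilizer of any point of any $G/V$ is then, up to a finite extension, such a point stabilizer, hence amenable; consequently $\ell^2(G/V)|_\Gamma$ is a direct sum of representations of the form $\ell^2(\Gamma/H)$ with $H$ amenable, and is therefore weakly contained in the regular representation $\lambda_\Gamma$. As $\Gamma$ is non-amenable, $\lambda_\Gamma$ has no $(Q_0, \epsilon_0)$-invariant unit vectors for some finite $Q_0 \subseteq \Gamma$ and $\epsilon_0 > 0$, and weak containment transfers this property to every $\ell^2(G/V)$, which gives the required inequality with $Q = Q_0$.

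It then remains to construct such a $\Gamma$ in each case. For $S_\infty = \operatorname{Sym}(\bX)$ one takes any free action of a non-abelian free group on $\bX$, so that all point stabilizers are trivial. For $\GL(\infty, \F_q)$, viewed as the automorphism group of the countably infinite-dimensional vector space over $\F_q$, one embeds a suitable finitely generated non-amenable subgroup of $\operatorname{SL}(2, \F_q[t])$ through its linear action on $\F_q[t]^2$, where the point stabilizers are unipotent, hence abelian. For $\Aut(\Q)$, $\Homeo(2^\N)$ and $\Aut(\bR)$ one uses, respectively, a free subgroup of $\Aut(\Q)$ acting freely on $\Q$, a free minimal action of a free group on the Cantor space, and a free action of a free group on the random graph. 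The principal difficulty of the whole argument is exactly this last point together with the uniformity demanded in the previous paragraph: one needs a single pair $(Q, c)$ that works \emph{simultaneously} for all proper open $V$ — this is precisely what makes the direct-sum step go through — and securing it requires exploiting the specific combinatorics of each of the five structures, since an arbitrary oligomorphic group need not contain a subgroup $\Gamma$ of the required kind.
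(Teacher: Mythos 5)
Your overall strategy is sound and runs parallel to the paper's: the paper also reduces, via the classification, to subrepresentations of quasi-regular representations (Proposition~\ref{p:subreps} passes to the representations $\ell^2(\bX_0^n)$) and extracts a uniform gap from a finitely generated free subgroup acting with small stabilizers on the structure; the only real methodological divergence is that you use amenability of stabilizers plus weak containment of $\ell^2(G/V)|_\Gamma$ in $\lambda_\Gamma$, where the paper uses an $\ell^1$-non-amenability estimate on $\bX_0$ transported to powers (Lemma~\ref{l:nonam-prod}, Proposition~\ref{p:KazhSet}). Your reduction steps are correct: an irreducible $\Ind_{C(V)}^G\sigma$ does embed in $\ell^2(G/V)$, the Kesten-type gap is uniform over all representations weakly contained in $\lambda_\Gamma$, and the sandwiching of every proper open subgroup between a pointwise and a setwise stabilizer of a finite substructure is exactly weak elimination of imaginaries (Lemma~\ref{l:weak-el}), which holds for the five examples and also disposes of finite-index subgroups; all of this you assert rather than prove, but it is true.

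The genuine gap is in the case $\Homeo(2^\N)$. The underlying set of the structure is the atomless Boolean algebra, i.e.\ $\Clopen(2^\N)$, so what your weak-containment step needs is amenability of the $\Gamma$-stabilizers of non-trivial \emph{clopen sets}, not of points of the Cantor space. A free minimal action of $\bF_2$ on $2^\N$ gives no control over these setwise stabilizers: for instance, take an index-two subgroup $H \le \bF_2$ (free of rank $3$), a free minimal action of $H$ on a Cantor set $Y$, and induce to $\bF_2$; the resulting action on two copies of $Y$ is free and minimal, yet the clopen set consisting of one copy has stabilizer exactly $H$, which is non-amenable, so for the corresponding open subgroup $V$ the containment $\ell^2(G/V)|_\Gamma \prec \lambda_\Gamma$ fails. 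The repair is to choose the embedding with the Boolean algebra in mind, e.g.\ the shift $\bF_2 \actson 2^{\bF_2}$: a non-trivial clopen set has a finite non-empty support in $\bF_2$, and torsion-freeness shows no non-trivial element can preserve it, so the action on $\bX_0$ is free (this is the paper's choice). Relatedly, for $\Aut(\Q)$ and $\Aut(\bR)$ you assert, but do not construct, a free subgroup acting freely on $\Q$, respectively on the random graph; these existence statements are precisely where the paper does its work (a bi-invariant dense order on $\bF_2$ for $\Aut(\Q)$, random Cayley graphs of $\bF_2$ for $\Aut(\bR)$), so as written those two cases, like $\Homeo(2^\N)$, remain unproved, whereas your $S_\infty$ and $\GL(\infty,\F_q)$ constructions are adequate.
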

In all of those groups, it is also possible to find explicit finite Kazhdan sets. We also have a more general result (Theorem~\ref{th:propT-gen}), which requires some additional terminology to state. The question whether all closed oligomorphic groups have property (T) remains open.

The paper is organized as follows. In Section~\ref{s:olig-groups}, we recall the definition of Roelcke precompactness and provide a model-theoretic characterization of Roelcke precompact subgroups of $S_\infty$. In Section~\ref{s:GelfRaikov}, we prove some basic results about representations of non-archimedean groups, including Theorem~\ref{th:GelfRaikov}. In Section~\ref{s:rep-olig}, we prove the main theorem. Section~\ref{s:examples} is devoted to some model-theoretic considerations and calculations in specific examples. In Section~\ref{s:propT}, we discuss property (T).

\subsection*{Notation} If $G$ is a group and $g, x \in G$, $g^x$ denotes the conjugate $xgx^{-1}$. Note that this is the conjugation action on the left, so that $(g^x)^y = g^{yx}$. If $G$ is the automorphism group of a structure $\bX$ and $\bar a \in \bX^n$ is a tuple, $G_{\bar a}$ denotes the stabilizer of all elements of $\bar a$. If $\bA \sub \bX$ is a substructure, $G_\bA$ is the \df{setwise stabilizer} of $\bA$ (the set of all $g$ such that $g \cdot \bA = \bA$) and $G_{(\bA)}$ is the \df{pointwise stabilizer} (the set of all $g$ such that $g \cdot a = a$ for all $a \in \bA$). If $X$ is a set, $X^{[n]}$ denotes the set of all subsets of $X$ of size $n$.

A \df{representation} of a group $G$ is always a unitary representation. If $\pi$ is a representation, $\mcH(\pi)$ denotes its Hilbert space. All Hilbert spaces are complex.

\subsection*{Acknowledgements} I would like to thank Ita\"\i\ Ben Yaacov and C.~Ward Henson for an important insight for the proof of Lemma~\ref{l:left-right}, Martin Hils for explaining to me some basic model theory, and H.~Dugald Macpherson for pointing out an error in a preliminary version of the paper. I am grateful to the Fields Institute in Toronto for its hospitality while this paper was being finished. I am also grateful to the referee for a number of helpful comments.

\section{Oligomorphic groups and Roelcke precompactness} \label{s:olig-groups}
\subsection{Roelcke precompact topological groups}
A topological group $G$ is \df{precompact} iff the completion of its left uniformity is compact iff the completion of its right uniformity is compact. In that case, the common completion of the left and the right uniformities has the structure of a compact group in which $G$ embeds topologically as a dense subgroup. In particular, if $G$ is Polish, $G$ is precompact iff it is compact. The condition that the left uniformity on a group $G$ is precompact can be written as follows: for every neighborhood $U$ of the identity, there exists a finite set $F \sub G$ such that $FU = G$.

There exists a weaker notion of precompactness that will be central for this paper. A topological group is called \df{Roelcke precompact} if the completion of its \df{lower uniformity} (the greatest lower bound of the left and right uniformities) is compact. We will find it, however, more convenient to work with the following direct definition.
\begin{defn} \label{df:Roelcke}
A topological group $G$ is \df{Roelcke precompact} if for every neighborhood of the identity $U$, there exists a finite set $F \sub G$ such that $G = UFU$.
\end{defn}
The notion of Roelcke precompactness is weak enough to include many interesting non-compact examples but still sufficiently powerful to allow the generalisation of some results valid for compact groups. Roelcke precompact groups were introduced by Roelcke--Dierolf~\cite{Roelcke1981}, who also gave the first examples, and were later studied by many authors, including \Uspenskii and Glasner. Known examples of Roelcke precompact groups include the unitary group of a separable, infinite-dimensional Hilbert space (\Uspenskii~\cite{Uspenskii1998}), the isometry group of the bounded Urysohn metric space (\Uspenskii~\cite{Uspenskii2008}), and the automorphism group of a standard probability space (Glasner~\cite{Glasner2009p}). We also note that Roelcke precompactness is only interesting for ``infinite-dimensional'' groups: a locally compact group is Roelcke precompact iff it is compact (to see this, note that if $U$ is a compact neighborhood of the identity, then $G = UFU$ is compact). We suggest the survey \cite{Uspenskii2002} by \Uspenskii as a general reference.

We start by showing that the class of Roelcke precompact groups enjoys most of the closure properties of the class of compact groups with the notable exception of closure under taking closed subgroups. (In fact, as was shown in \cite{Uspenskii2008}, every Polish group embeds as a closed subgroup of a Polish Roelcke precompact group.)
\begin{prop} \label{p:closure}
The following statements hold:
\begin{enumerate} \romanenum
\item \label{i:pcl:quot} If $\pi \colon G \to H$ is a continuous homomorphism with a dense image and $G$ is Roelcke precompact, then so is $H$.
\item \label{i:pcl:prod} If $G_1$ and $G_2$ are Roelcke precompact, then so is $G_1 \times G_2$.
\item \label{i:pcl:inv} The inverse limit of an inverse system of Roelcke precompact groups is Roelcke precompact. In particular, an arbitrary product of Roelcke precompact groups is Roelcke precompact.
\item \label{i:pcl:open} If $G$ is Roelcke precompact and $H \leq G$ is open, then $H$ is Roelcke precompact.
\item \label{i:pcl:ext} If $N$ is a normal subgroup of $G$ such that both $N$ and $G/N$ are Roelcke precompact, then so is $G$.
\end{enumerate}
\end{prop}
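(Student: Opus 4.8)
The plan is to recast Roelcke precompactness in terms of double cosets: $G$ is Roelcke precompact if and only if for every open $V \ni 1$ the double coset space $V \backslash G / V$ is finite. One direction is immediate from Definition~\ref{df:Roelcke}; conversely, given any neighborhood $U$ of $1$, pick an open $V \ni 1$ with $V \sub U$ and one representative of each of the finitely many $V$-double cosets, obtaining a finite $F$ with $G = VFV \sub UFU$. With this reformulation, parts (i), (ii) and (iv) reduce to bookkeeping, part (iii) reduces to a statement about subgroups, and the real work lies in part (v).

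For (i), I would fix a target neighborhood $V \ni 1$ in $H$, choose an open symmetric $W \ni 1$ in $H$ with $WW \sub V$, and put $U := \pi^{-1}(W)$, a neighborhood of $1$ in $G$; Roelcke precompactness of $G$ gives a finite $F$ with $G = UFU$. For $h \in H$ the set $WhW$ is open and nonempty, so it meets the dense subgroup $\pi(G)$; writing a point of the intersection as $\pi(g) = w_1 h w_2$ and then $g = u_1 f u_2$ yields $h = (w_1^{-1}\pi(u_1))\,\pi(f)\,(\pi(u_2) w_2^{-1}) \in V\,\pi(F)\,V$, so $\pi(F)$ witnesses Roelcke precompactness of $H$ for $V$. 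Part (ii) — and with it the product case of (iii) — is immediate, since for a basic open neighborhood $\prod_{i \in S} V_i \times \prod_{i \notin S} G_i$ with $S$ finite the double cosets decompose coordinatewise, their number being the finite product of the finitely many $V_i$-double cosets of $G_i$. Part (iv) is equally quick: if $H \leq G$ is open and $V \ni 1$ is open in $H$ — hence open in $G$ and contained in $H$ — then any relation $h' = vhv'$ with $v, v' \in V$ and $h, h' \in H$ already lives in $H$, so the $H$-double cosets of $V$ are exactly the traces on $H$ of the $G$-double cosets, and there are at most as many.

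For (iii) in general, realize $G = \varprojlim_i G_i$ as a closed subgroup of $\prod_i G_i$ with coordinate projections $\pi_i$; since the index set is directed, the sets $\pi_j^{-1}(V)$ (with $V \ni 1$ open in $G_j$) form a neighborhood basis at $1$. The key computation is that $\pi_j$ induces a bijection between the $\pi_j^{-1}(V)$-double cosets of $G$ and the $(V \cap \pi_j(G))$-double cosets of the subgroup $\pi_j(G) \leq G_j$ — because $\ker \pi_j \sub \pi_j^{-1}(V)$ makes the kernel harmless, while a solution of the double-coset equation in $\pi_j(G)$ always lifts to one in $G$. Hence $G$ is Roelcke precompact if and only if every $\pi_j(G)$ is. For products the $\pi_j$ are onto, so $\pi_j(G) = G_j$ and we are done; the general case additionally uses the observation — a short double-coset argument in the same spirit as (i) — that a dense subgroup of a Roelcke precompact group is Roelcke precompact, applied to $\pi_j(G) \sub \overline{\pi_j(G)}$.

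Part (v) is the crux. Given a target neighborhood $U_0 \ni 1$ in $G$, choose an open $U \ni 1$ with $UU \sub U_0$. Roelcke precompactness of $G/N$ (with quotient map $q$) gives a finite $\bar F$ with $G/N = q(U)\bar F q(U)$; lift $\bar F$ to a finite $F_1 \sub G$. Because $F_1$ is \emph{finite}, the set $U' := U \cap N \cap \bigcap_{f \in F_1} f^{-1} U f$ is still a neighborhood of $1$ in $N$, and Roelcke precompactness of $N$ gives a finite $F_2 \sub N$ with $N = U' F_2 U'$. Now for $g \in G$ the coset equation downstairs gives $g = n\, u_1 f_i u_2$ with $u_1, u_2 \in U$, $f_i \in F_1$, $n \in N$; using normality of $N$ I would conjugate $n$ successively past $u_1$ and past $f_i$ to rewrite this as $g = u_1 f_i\, n'' u_2$ with $n'' \in N$, decompose $n'' = v_1 w v_2$ with $v_1, v_2 \in U'$ and $w \in F_2$, and conjugate $v_1$ back past $f_i$, which stays inside $U$ precisely because $v_1 \in U' \sub f_i^{-1} U f_i$ — this is exactly where finiteness of $F_1$ enters. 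Collecting terms gives $g \in U_0\,(F_1 F_2)\,U_0$ with $F_1 F_2$ finite. I expect the main obstacle to be this non-associative bookkeeping in (v): since the sets $U$ are not subgroups, products such as $u_1 \cdot (f_i v_1 f_i^{-1})$ must be allowed to drift into the slightly larger $U_0$, and each $N$-component has to be shepherded into the central slot by conjugations that the shrunken neighborhood $U'$ is designed to absorb. A lesser subtlety is (iii) for an inverse system with non-surjective projections, which is handled by the dense-subgroup remark above.
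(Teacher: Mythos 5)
Your treatment of (i), (ii), (iv) and (v) is correct and essentially the paper's own argument; in particular your (v) is the same scheme as the paper's (lift a finite set $F_1$ from $G/N$, shrink the neighbourhood of $1$ in $N$ to $U'=U\cap N\cap\bigcap_{f\in F_1}f^{-1}Uf$, and use normality to shepherd the $N$-component into the middle slot), and your bookkeeping there is sound. One cosmetic caveat: the opening recasting ``Roelcke precompact iff $V\backslash G/V$ is finite for every open $V$'' only makes literal sense when $V$ ranges over open \emph{subgroups}, hence only for non-archimedean groups; Proposition~\ref{p:closure} is stated for arbitrary topological groups (the unitary group, for instance, has no small open subgroups), so the converse step ``pick an open subgroup $V\subseteq U$'' is unavailable in general. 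This does not actually hurt you, because each of your arguments works verbatim with the definition $G=UFU$ in place of double-coset counting, but the framing should be dropped or restricted to subgroups of $S_\infty$.

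The genuine gap is in (iii) for inverse systems whose projections are not surjective. Your reduction ``$G$ is Roelcke precompact iff every $\pi_j(G)$ is'' is correct, and is a clean way to organize the paper's computation: a covering $\pi_j(G)=(V\cap\pi_j(G))F'(V\cap\pi_j(G))$ lifts to $G=\pi_j^{-1}(V)F\pi_j^{-1}(V)$ because $\ker\pi_j\subseteq\pi_j^{-1}(V)$ and the kernel element can be absorbed into the right-hand factor. But to finish you must know that each $\pi_j(G)$ is Roelcke precompact, and your proposed route --- $\pi_j(G)$ is dense in $\overline{\pi_j(G)}$, and dense subgroups of Roelcke precompact groups are Roelcke precompact --- presupposes that $\overline{\pi_j(G)}$ is Roelcke precompact. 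That set is merely a \emph{closed} subgroup of $G_j$, and Roelcke precompactness does not pass to closed subgroups: as the paper itself emphasizes, by \Uspenskii's theorem every Polish group embeds as a closed subgroup of a Roelcke precompact Polish group, so nothing you have said prevents $\overline{\pi_j(G)}$ from failing the property. (The dense-subgroup lemma itself is true, though it needs a little more than the argument in (i): both outer factors, not just the representative, must be steered into the dense subgroup.) Your argument is complete under the standing assumption that the maps $\pi_j$ are surjective, or at least have dense image, in which case $\overline{\pi_j(G)}=G_j$ and your lemma applies; note that the paper's own proof makes the same implicit assumption when it lifts the finite set $F\subseteq H_i$ to $F'\subseteq G$ with $\pi_i(F')=F$, and this hypothesis holds in every application (arbitrary products, and the system of the groups $\pi_n(G)$ in Theorem~\ref{th:char-olig}). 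So either build that hypothesis into your statement of (iii) or confine your extra remark to the dense-image case; as written, the fully general claim is not proved.
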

\begin{proof}
\eqref{i:pcl:quot}. Let $U \sub H$ be an open neighborhood of $1$ . Find an open $V$ such that $1 \in V^2 \sub U$. Let $F \sub G$ be finite such that $G = \pi^{-1}(V)F\pi^{-1}(V)$. We check that $U\pi(F)U = H$. Let $h \in H$. By the density of $\pi(G)$ in $H$, there exists $g \in G$ such that $\pi(g) \in hV^{-1}$. Then $g = v_1 f v_2$ for some $v_1, v_2 \in \pi^{-1}(V)$ and $f \in F$. Finally, $h \in \pi(g)V = \pi(v_1) \pi(f) \pi(v_2) V \sub U\pi(F)U$.

\eqref{i:pcl:prod}. If $U_1 \times U_2$ is an open neighborhood of the identity in $G_1 \times G_2$ and $F_1 \sub G_1$, $F_2 \sub G_2$ are finite such that $U_1F_1U_1 = G_1$, $U_2F_2U_2 = G_2$, then $G_1 \times G_2 = (U_1 \times U_2)(F_1 \times F_2)(U_1 \times U_2)$.

\eqref{i:pcl:inv}. Let $G = \varprojlim H_i$. Let $U = \pi_i^{-1}(U_i)$ be an open neighborhood of $1$ in $G$, where $U_i$ is an open neighborhood of $1$ in $H_i$. Then there exists a finite $F \sub H_i$ with $H_i = U_iFU_i$. If $F' \sub G$ is finite with $\pi_i(F') = F$, we check that $UF'U^2 = G$. Indeed, let $x$ in $G$. Then there exist $u_1, u_2 \in G$, $f \in F'$ such that $\pi(x) = \pi(u_1 f u_2)$. Then there is $h \in \ker \pi_i$ such that $x = u_1 f u_2 h \in UF'U^2$ (because $\ker \pi_i \sub U$ by the definition of $U$).

\eqref{i:pcl:open}. Let $U \sub H$ be an open neighborhood of $1$. Then $U$ is open in $G$ and there exists a finite $F \sub G$ such that $G = UFU$. Now one easily checks that $H = U(F \cap H)U$.

\eqref{i:pcl:ext}. Let $U \sub G$ be an open neighborhood of $1$. Let $Q = G/N$ and denote by $\pi \colon G \to Q$ the quotient map. Then $\pi(U)$ is open in $Q$. Find a finite $F \sub G$ such that $\pi(U)\pi(F)\pi(U) = Q$ and assume also that $1 \in F$. Let $V = N \cap \bigcap_{f \in F} U^f$ and note that $V$ is relatively open in $N$. Find a finite $B \sub N$ such that $VBV = N$. We claim that $U^2BFU^2 = G$. Indeed, fix $x \in G$. Find $f \in F$ and $u_1, u_2 \in U$ such that $\pi(u_1 f u_2) = \pi(x)$. Then there exists $h \in N$ with $x = h u_1 f u_2$. Find $v_1, v_2 \in V$ and $b \in B$ such that $u_1^{-1} h u_1 = v_1 b v_2$. Finally, we have
\[ \begin{split}
x &= h u_1 f u_2 = u_1 (u_1^{-1} h u_1) f u_2 = u_1 v_1 b v_2 f u_2 \\
  &= u_1 v_1 b f (f^{-1} v_2 f) u_2 \in U^2 BF U^2,
\end{split} \]
finishing the proof.
\end{proof}

We end this subsection with a simple application. In \cite{Rosendal2009}, Rosendal introduced property (OB), intended as a generalization of the well known properties (FA) and (FH). A topological group has \df{property (OB)} if every time it acts (separately) continuously by isometries on a metric space, every orbit is bounded.
\begin{prop} \label{p:OB}
Every Roelcke precompact group has property (OB).
\end{prop}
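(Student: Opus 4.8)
The plan is to use the standard characterization of property (OB) due to Rosendal: a topological group $G$ has property (OB) if and only if for every open neighborhood $U$ of the identity, there is a finite set $F \subseteq G$ and an integer $n$ such that $G = (FU)^n$. (Equivalently, $G$ cannot be written as an increasing union of a chain of proper open subsets that is also "exhausting by bounded sets" — but the $(FU)^n$ formulation is the one that matches Roelcke precompactness cleanly.) Given this, the proof is immediate: let $U$ be an open neighborhood of $1$; shrinking $U$ we may assume $U = U^{-1}$, and by Roelcke precompactness there is a finite $F \subseteq G$ with $G = UFU$. Then $G = UFU \subseteq (FU)(FU) \cdot U \subseteq (F U)^3$ after absorbing the stray $U$ into an enlarged finite set (or simply noting $U \subseteq 1 \cdot U \subseteq FU$ once we add $1$ to $F$), so $G = (F'U)^n$ for a suitable finite $F'$ and $n \le 3$. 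Hence $G$ has property (OB).

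Alternatively, if one prefers to argue directly from the definition without quoting Rosendal's criterion: suppose $G$ acts continuously by isometries on a metric space $(X,d)$ and fix $x_0 \in X$. By continuity of the action at $(1, x_0)$, there is an open neighborhood $U$ of $1$ such that $d(g \cdot x_0, x_0) \le 1$ for all $g \in U$. By Roelcke precompactness, write $G = UFU$ with $F$ finite. Set $R = \max_{f \in F} d(f \cdot x_0, x_0) < \infty$. Then for any $g = u_1 f u_2 \in G$ with $u_1, u_2 \in U$, $f \in F$, the triangle inequality and the fact that the action is by isometries give
\[
d(g \cdot x_0, x_0) \le d(u_1 f u_2 \cdot x_0, u_1 f \cdot x_0) + d(u_1 f \cdot x_0, u_1 \cdot x_0) + d(u_1 \cdot x_0, x_0) \le 1 + R + 1,
\]
so the orbit of $x_0$ has diameter at most $2(R+2)$, hence is bounded. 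Since $x_0$ was arbitrary, every orbit is bounded.

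I do not expect any genuine obstacle here; the content is entirely a matter of unwinding definitions, and the only mild subtlety is the bookkeeping with $U = U^{-1}$ and adding $1$ to $F$ so that the word-length formulation $G = (FU)^n$ comes out cleanly. The direct argument above sidesteps even that, so that is the version I would actually write.
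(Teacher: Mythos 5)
Your direct argument is correct and is essentially the paper's own proof: the triangle-inequality estimate you write down is exactly the inequality displayed in the paper, the only difference being that the paper packages it as uniform continuity of $g \mapsto d(x_0, g \cdot x_0)$ with respect to the lower uniformity and then invokes compactness of the Roelcke completion, whereas you conclude boundedness directly from $G = UFU$ (which also makes clear that only continuity of $g \mapsto g \cdot x_0$, i.e.\ separate continuity, is used). The opening appeal to Rosendal's $(FU)^n$ characterization is superfluous --- only its trivial direction would be needed, and your second, direct version is the one to keep.
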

\begin{proof}
Let $G$ be a Roelcke precompact group and $G \actson (X, d)$ a separately continuous action on a metric space by isometries. For every point $x_0 \in X$, the function $g \mapsto d(x_0, g \cdot x_0)$ is uniformly continuous in the lower uniformity, as can be seen from the inequality
\[ \begin{split}
 d(x_0, h_1 g h_2 \cdot x_0) &= d(h_1^{-1} \cdot x_0, g h_2 \cdot x_0) \\
 &\leq d(h_1^{-1} \cdot x_0, x_0) + d(x_0, g \cdot x_0) + d(g \cdot x_0, gh_2 \cdot x_0) \\
 &= d(h_1^{-1} \cdot x_0, x_0) + d(x_0, g \cdot x_0) + d(x_0, h_2 \cdot x_0).
\end{split}
\]
It thus extends to the compact completion and must be bounded.
\end{proof}

\subsection{Permutation groups and closed subgroups of $S_\infty$}
We now concentrate on the main objects of study in this paper, namely infinite permutation groups. Let $S_\infty$ be the group of all permutations of a countable infinite set $\bX$. It becomes naturally a topological group if equipped with the pointwise convergence topology, where $\bX$ is taken to be discrete. A \df{permutation group} is a topological subgroup of the group of all permutations of $\bX$.

It is well known that the topological groups that can be realized as permutation groups are exactly the separable topological groups that admit a countable basis at the identity consisting of open subgroups (those groups are often called \df{non-archimedean}). The basis of open subgroups is given by the pointwise stabilizers of finite subsets of $\bX$.

A natural way in which closed permutation groups arise in practice is as automorphism groups of countable structures in model theory, for example, automorphism groups of countable graphs, countable orders, or various algebraic structures. Of special interest to us will be the oligomorphic groups (see Definition~\ref{df:olig}) for which it is possible to translate back and forth between model-theoretic and permutation group-theoretic language. For a gentle introduction to the subject of oligomorphic groups, we refer the reader to Cameron~\cite{Cameron1990}. It turns out that there is a close connection between the properties of being Roelcke precompact and oligomorphic. To see this, we first reformulate Definition~\ref{df:Roelcke} for non-archimedean groups: a topological subgroup $G$ of $S_\infty$ is Roelcke precompact iff for every open subgroup $V \leq G$, the set of double cosets
\[
V \backslash G / V = \set{VxV : x \in G}
\]
is finite.

\begin{theorem} \label{th:char-olig}
For a topological subgroup $G \leq S_\infty$, the following are equivalent:
\begin{enumerate} \romanenum
\item \label{i:tco:RC} $G$ is Roelcke precompact;
\item \label{i:tco:olig} for every continuous action $G \actson \bX$ on a countable, discrete set $\bX$ with finitely many orbits, the induced action $G \actson \bX^n$ has finitely many orbits for each $n$;
\item \label{i:tco:inv-limit} $G$ can be written as an inverse limit of oligomorphic groups.
\end{enumerate}
\end{theorem}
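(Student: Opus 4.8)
The plan is to prove the cycle \eqref{i:tco:RC} $\Rightarrow$ \eqref{i:tco:olig} $\Rightarrow$ \eqref{i:tco:inv-limit} $\Rightarrow$ \eqref{i:tco:RC}, throughout using the reformulation recorded above (for $G \leq S_\infty$, Roelcke precompactness means $V \backslash G / V$ is finite for every open subgroup $V$) together with two elementary facts. First, if a group $H$ acts transitively on a set with point stabiliser $V$, then for every $H$-set $Y$ the $H$-orbits on $(H/V) \times Y$ are in bijection with the $V$-orbits on $Y$, via $(hV, y) \mapsto$ the $V$-orbit of $h^{-1}y$; in particular the $H$-orbits on $(H/V) \times (H/W)$ correspond to the double cosets in $V \backslash H / W$, and the $V$-orbits on $H/W$ are indexed by $V \backslash H / W$. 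Second, if $V', V, W$ are subgroups of $H$ with $V' \subseteq V \cap W$, then the partition $V \backslash H / W$ is coarser than $V' \backslash H / V'$, so it is finite whenever the latter is.

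For \eqref{i:tco:RC} $\Rightarrow$ \eqref{i:tco:olig} I would prove, by induction on $n$, the following more symmetric statement: if $H$ is Roelcke precompact and $Z_1, \dots, Z_n$ are continuous actions of $H$ on countable discrete sets, each with finitely many orbits, then $H \actson Z_1 \times \dots \times Z_n$ has finitely many orbits. The case $n = 1$ is the hypothesis. For the step, splitting $Z_1$ into its finitely many orbits lets us assume $Z_1 = H/V$ with $V$ open, and by the first fact the $H$-orbits on $Z_1 \times \dots \times Z_n$ correspond to the $V$-orbits on $Z_2 \times \dots \times Z_n$. Now $V$ is Roelcke precompact by Proposition~\ref{p:closure}\eqref{i:pcl:open}, and each $Z_\ell$ ($\ell \geq 2$) has finitely many $V$-orbits: it is a finite union of orbits $H/W$, and the $V$-orbits on $H/W$ are indexed by the double cosets $V \backslash H / W$, finite by the second fact (take $V' = V \cap W$). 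The inductive hypothesis applied to $V$ and $Z_2, \dots, Z_n$ finishes the step, and \eqref{i:tco:olig} is the case $Z_1 = \dots = Z_n = \bX$. I regard this induction as the technical heart of the proof.

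For \eqref{i:tco:olig} $\Rightarrow$ \eqref{i:tco:inv-limit}, fix a realization of $G$ as a permutation group and a countable basis $\{V_i\}$ of open subgroups at the identity, closed under finite intersection, with $\bigcap_i V_i = \{1\}$. Let $N_i = \bigcap_{g \in G} g V_i g^{-1}$ be the kernel of the (continuous) action of $G$ on the countable set $\bX_i := G/V_i$, and let $\bar G_i$ be the image of $G$ in $\mathrm{Sym}(\bX_i)$ with the subspace topology, so that $\bar G_i \cong G/N_i$ and the quotient map $\phi_i \colon G \to \bar G_i$ is continuous and onto. Since $\bX_i$ is a single $G$-orbit, \eqref{i:tco:olig} shows that $\bar G_i$ has finitely many orbits on each power $\bX_i^n$; that is, $\bar G_i$ is an oligomorphic permutation group. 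When $V_i \subseteq V_j$ one has $N_i \subseteq N_j$, which yields a surjection $p_{ji} \colon \bar G_i \to \bar G_j$ with $p_{ji} \circ \phi_i = \phi_j$, and one checks this is continuous (the $p_{ji}$-preimage of a basic open subgroup of $\bar G_j$ is a subgroup of $\bar G_i$ containing a basic open subgroup). So $(\bar G_i, p_{ji})$ is an inverse system of oligomorphic groups, and the $\phi_i$ induce a continuous homomorphism $\Phi \colon G \to \varprojlim_i \bar G_i$. It is injective because $\ker \Phi = \bigcap_i N_i \subseteq \bigcap_i V_i = \{1\}$, and a homeomorphism onto its image because the $\phi_i$-preimage of the stabiliser of the point $V_i \in \bX_i$ is exactly $V_i$, so the initial topology induced on $G$ by the $\phi_i$ contains the basis $\{V_i\}$ and hence coincides with the topology of $G$. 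The remaining point — that $\Phi$ is onto, i.e.\ that $G$ is the whole inverse limit and not merely a dense subgroup of it — I expect to be the main obstacle; it is where one must use more than Roelcke precompactness (when $G$ is closed in $S_\infty$ it is immediate, the limit being identified with the closure of $G$ inside $\mathrm{Sym}(\bigsqcup_i \bX_i)$).

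Finally, for \eqref{i:tco:inv-limit} $\Rightarrow$ \eqref{i:tco:RC}: every oligomorphic group $Q$ is Roelcke precompact, since any open $W \leq Q$ contains the pointwise stabiliser $V$ of some finite tuple $\bar a$ from the set on which $Q$ acts, $V \backslash Q / V$ is in bijection with the orbits of $Q$ on the square of the orbit of $\bar a$ — finitely many by oligomorphy — and so $W \backslash Q / W$ is finite by the second fact. Then \eqref{i:tco:inv-limit} together with Proposition~\ref{p:closure}\eqref{i:pcl:inv} (inverse limits of Roelcke precompact groups are Roelcke precompact) gives \eqref{i:tco:RC}, closing the cycle.
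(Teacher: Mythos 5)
Your implications (i)$\Rightarrow$(ii) and (iii)$\Rightarrow$(i) are correct and are essentially the paper's arguments in a slightly different packaging: for (i)$\Rightarrow$(ii) the paper fixes one transitive $G$-set and runs an induction in which the finite set of representatives $B(\bar a)$ is produced directly from the double cosets of the stabilizer $G_{\bar a c_0}$, whereas you pass to the open subgroup $V$ (Roelcke precompact by Proposition~\ref{p:closure}\eqref{i:pcl:open}) and count its orbits via the double coset spaces $V\backslash H/W$; both rest on the same finiteness of double cosets, and your version has the mild advantage of treating products of different $G$-sets uniformly. The implication (iii)$\Rightarrow$(i) is exactly the paper's.

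For (ii)$\Rightarrow$(iii) you build precisely the paper's inverse system (the images $\bar G_i$ of $G$ in $\mathrm{Sym}(G/V_i)$), and the one point you leave open --- surjectivity of $\Phi\colon G\to\varprojlim_i\bar G_i$ --- is exactly the content of the paper's unproved sentence ``its inverse limit is isomorphic to $G$''. You can close it whenever $G$ is closed in $S_\infty$ (hence Polish and complete in its two-sided uniformity): extract a decreasing cofinal sequence $V_{i+1}\leq V_i$, lift a compatible sequence $(\bar g_i)$ to $g_i\in G$; compatibility gives $g_j\in g_iN_i$ for $j\geq i$, where $N_i=\ker\phi_i$ is a \emph{normal} closed subgroup contained in $V_i$, and normality makes $(g_i)$ Cauchy for both the left and the right uniformities, so it converges to some $g\in\bigcap_i g_iN_i$, and then $\Phi(g)=(\bar g_i)$. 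Your suspicion that Roelcke precompactness alone does not suffice here is justified: for the dense subgroup $G=\bigoplus_n\Z/2\Z$ of $(\Z/2\Z)^\N$ (a topological subgroup of $S_\infty$ via the action on $\N\times\{0,1\}$), $G$ is precompact, hence Roelcke precompact, and satisfies (ii) since every continuous transitive $G$-action is finite; yet the construction gives $\bar G_i\cong(\Z/2\Z)^i$ and $\varprojlim_i\bar G_i\cong(\Z/2\Z)^\N\neq G$, so $\Phi$ is not onto and this construction does not prove (iii) for such $G$. So, modulo completeness of $G$ (in particular for the closed subgroups that are the paper's real concern), your proof is complete and coincides with the paper's; the surjectivity issue you flag is a genuine subtlety that the paper's own proof passes over in silence.
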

\begin{proof}

\eqref{i:tco:RC} $\Implies$ \eqref{i:tco:olig}. Without loss of generality, we can suppose that the action $G \actson \bX$ is transitive. We use induction on $n$. The case $n = 1$ is given by the hypothesis. For the induction step $n \to n+1$, it suffices to find for every $\bar a \in \bX^n$, a finite set $B(\bar a) \sub \bX$ such that for every $d \in \bX$, there is $h \in G_{\bar a}$ and $b \in B(\bar a)$ such that $d = h \cdot b$. Then if $\set{\bar a_1, \ldots, \bar a_s}$ is a complete set of representatives for the orbits of $G \actson \bX^n$, $\set{(\bar a_i, b) : i \leq s, b \in B(\bar a_i)}$ will be a complete set of representatives for the action $G \actson \bX^{n+1}$. Indeed, let $(\bar c, d) \in \bX^{n+1}$. Using the induction hypothesis, find $g \in G$ such that $g \cdot \bar a_i = \bar c$. Find $h \in G_{\bar a_i}$ and $b \in B(\bar a_i)$ such that $g^{-1} \cdot d = h \cdot b$. Then one has
\[
gh \cdot (\bar a_i, b) = g \cdot (a_i, h \cdot b) = (\bar c, d).
\]

Fix now $\bar a \in \bX^n$, let $c_0$ be an arbitrary element of $\bX$, and let $\set{G_{\bar a c_0} g_0 G_{\bar a c_0}, \ldots, \linebreak G_{\bar a c_0} g_k G_{\bar a c_0}}$ be a complete list of the double cosets of $G_{\bar a c_0}$. Set $B(\bar a) = \set{g_i \cdot c_0 : i = 1, \ldots, k}$. Let now $d \in \bX$ be arbitrary, $d = g \cdot c_0$ (using the transitivity of the action). Let $i, h_1, h_2$ be such that $h_1, h_2 \in G_{\bar a c_0}$ and $g = h_1 g_i h_2$. We have
\[
d = g \cdot c_0 = h_1 g_i h_2 \cdot c_0  = h_1 g_i \cdot c_0,
\]
finishing the proof.

\eqref{i:tco:olig} $\Implies$ \eqref{i:tco:inv-limit}. Let $\{V_n : n \in \N\}$ be a basis at $1_G$ of open subgroups such that for all $n$, $V_{n+1} \leq V_n$. Then for each $n$, $G$ acts continuously by permutations on the discrete set $G/V_n$ and this gives rise to a continuous homomorphism $\pi_n \colon G \to \mathrm{Sym}(G/V_n)$. The groups $\{\pi_n(G) : n \in \N \}$ form a directed system (there are natural maps $\pi_{n+1}(G) \to \pi_n(G)$) and its inverse limit is isomorphic to $G$. Each $\pi_n(G)$ is oligomorphic by the hypothesis.

\eqref{i:tco:inv-limit} $\Implies$ \eqref{i:tco:RC}. In view of Proposition~\ref{p:closure}, it suffices to show that if $G$ is oligomorphic, then it is Roelcke precompact. Let $G$ be a group of permutations of $\bX$ such that the action $G \actson \bX$ is oligomorphic. It suffices to show that every stabilizer $G_{\bar a}$ for $\bar a \in \bX^n$ has finitely many double cosets. If $\set{(\bar a, g_1 \cdot \bar a), \ldots, (\bar a, g_k \cdot \bar a)}$ is a complete list of representatives for the $G$-orbits on $\bX^{2n}$ that are subsets of $G \cdot \bar a \times G \cdot \bar a$, then it is easy to check that $\set{G_{\bar a}g_iG_{\bar a} : i = 1, \ldots, k}$ exhausts all double cosets of $G_{\bar a}$.
\end{proof}
%

\begin{remark*}
A proof that every (even approximately) oligomorphic group is Roelcke precompact is essentially contained in \cite{Rosendal2009}. Also, some special cases of the above theorem had been known before: \Uspenskii had shown that $S_\infty$ and $\Homeo(2^\N)$ are Roelcke precompact \cites{Uspenskii2001,Uspenskii2002}.
\end{remark*}

A basic application of the theorem is the following corollary which had been noted before by many authors.
\begin{cor} \label{c:ctbl-open}
Every oligomorphic group has only countably many distinct open subgroups.
\end{cor}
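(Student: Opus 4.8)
The plan is to show that every open subgroup $V \leq G$ is determined by a finite amount of data coming from the action of $G$ on some countable set, and that there are only countably many such data. Fix a realization of $G$ as an oligomorphic permutation group acting on a countable set $\bX$. The key fact is that $G$ has a countable basis at the identity consisting of open subgroups; concretely, the pointwise stabilizers $G_{(\bA)}$ of finite subsets $\bA \sub \bX$ form such a basis (this is the standard description of the topology on a permutation group). Now let $V \leq G$ be an arbitrary open subgroup. Since $V$ is open, it contains some basic open subgroup $G_{(\bA)}$ with $\bA \sub \bX$ finite. The first step is thus: every open subgroup contains a pointwise stabilizer of a finite set.

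The second step is to count, for a fixed finite $\bA \sub \bX$, the subgroups $V$ with $G_{(\bA)} \leq V \leq G$. Since $G_{(\bA)}$ is open, it has finite index in any $V$ lying between it and $G$ only if $V$ itself has finite index over $G_{(\bA)}$ — but that need not hold. Instead I would argue as follows: subgroups $V$ with $G_{(\bA)} \leq V$ correspond bijectively to subsets of $G/G_{(\bA)}$ that are closed under the (left) multiplication action of $V$, hence in particular to $G_{(\bA)}$-invariant subsets; more to the point, such a $V$ is a union of double cosets $G_{(\bA)} g G_{(\bA)}$. By Theorem~\ref{th:char-olig} (the equivalence of Roelcke precompactness and oligomorphy, applied via Theorem~\ref{th:char-olig}\eqref{i:tco:RC}), the set of double cosets $G_{(\bA)} \backslash G / G_{(\bA)}$ is \emph{finite}. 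Therefore there are only finitely many subgroups $V$ with $G_{(\bA)} \leq V \leq G$, since each is a subset of a finite set of double cosets.

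Combining the two steps: the collection of all open subgroups of $G$ is the union, over all finite $\bA \sub \bX$, of the (finite) collection of subgroups containing $G_{(\bA)}$. Since $\bX$ is countable, there are only countably many finite subsets $\bA \sub \bX$, and a countable union of finite sets is countable. Hence $G$ has only countably many open subgroups.

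The routine verifications are the bijection between intermediate subgroups and unions of double cosets, and the fact that the stabilizers $G_{(\bA)}$ form a basis at the identity; the only substantive input is the finiteness of $G_{(\bA)} \backslash G / G_{(\bA)}$, which is exactly Roelcke precompactness as reformulated for non-archimedean groups just before Theorem~\ref{th:char-olig}, and which holds because oligomorphic groups are Roelcke precompact by that theorem. I do not anticipate a genuine obstacle here; the main thing to get right is simply to phrase ``an intermediate subgroup is a union of double cosets'' correctly and to invoke the double-coset finiteness for the right subgroup.
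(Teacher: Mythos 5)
Your argument is correct and is essentially the paper's own proof: every open subgroup contains a basic open subgroup and is therefore a union of (finitely many) double cosets of it, and the finiteness of $V\backslash G/V$ coming from Roelcke precompactness (Theorem~\ref{th:char-olig}) gives only finitely many possibilities per basic subgroup, hence countably many in total.
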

\begin{proof}
Fix a countable basis at the identity of open subgroups $\set{V_i}$. Every open subgroup contains a basic open subgroup and is therefore a union of finitely many double cosets of some $V_i$.
\end{proof}


Now we turn to some basic group-theoretic lemmas about oligomorphic groups that will be used later. Recall that two subgroups of a group are \df{commensurate} if their intersection has finite index in both. If $H \leq G$ is a subgroup, define the \df{commensurator of $H$ in $G$} to be
\[
\Comm_G(H) = \set{g \in G : H \text{ and } H^g \text{ are commensurate}}.
\]
It is a standard fact that $\Comm_G(H)$ is a subgroup of $G$ containing $H$. If $H_1$ and $H_2 \leq G$ are commensurate, then $\Comm_G(H_1) = \Comm_G(H_2)$. Note also that the number of left cosets in $HgH$ is equal to $[H : H \cap H^g]$ and the number of right cosets is $[H^g : H \cap H^g]$. The following lemma will be particularly useful for studying commensurators in oligomorphic groups. I am grateful for the idea of the proof to Ita\"\i~Ben~Yaacov and C.~Ward~Henson.
\begin{lemma}\label{l:left-right}
Let $G$ be a Roelcke precompact group and $V \leq G$ an open subgroup. Then, for every $x \in G$, the double coset $VxV$ contains finitely many left cosets of $V$ iff it contains finitely many right cosets of $V$.
\end{lemma}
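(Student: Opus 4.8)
The plan is to recast the statement in terms of the action of $G$ on the coset space $X := G/V$. Since $V$ is open and $G$ is separable, $X$ is a countable discrete $G$-set, and the action $G \actson X$ is continuous and transitive; moreover, by Roelcke precompactness (in the form of the reformulation of Definition~\ref{df:Roelcke} for non-archimedean groups, which says $V \backslash G / V$ is finite) the action $G \actson X \times X$ has only finitely many orbits, this set of orbits being in canonical bijection with $V \backslash G / V$ via $VxV \mapsto O_x := G\cdot(V, xV)$. For a $G$-orbit $O \sub X \times X$ and $p \in X$ put $O(p) = \set{q : (p,q) \in O}$ and $O^{-1}(p) = \set{q : (q,p) \in O}$; by transitivity of $G$ on $X$ the cardinalities $\delta^+(O) := |O(p)|$ and $\delta^-(O) := |O^{-1}(p)|$ do not depend on $p$. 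A direct computation identifies $\delta^+(O_x)$ with the number of left cosets of $V$ in $VxV$ (it equals $[V : V \cap V^x]$) and $\delta^-(O_x)$ with the number of right cosets (it equals $[V : V \cap V^{x^{-1}}]$). As every double coset is some $O_x$, the lemma reduces to the following: \emph{if $G$ acts transitively on a set $X$ with only finitely many orbits on $X \times X$, then every orbit $O \sub X \times X$ with $\delta^+(O) < \infty$ also has $\delta^-(O) < \infty$.}

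To prove this, list all the $G$-orbits on $X \times X$ as $O_1, \dots, O_m$ and set $F = \set{O_i : \delta^+(O_i) < \infty}$. Note three facts: (a) the diagonal is one of the $O_i$ and belongs to $F$; (b) the transpose $O_i \mapsto O_i^* := \set{(q,p) : (p,q) \in O_i}$ permutes $\set{O_1, \dots, O_m}$ and satisfies $\delta^+(O_i^*) = \delta^-(O_i)$, so it suffices to show that $F$ is closed under transpose; (c) if $O_i, O_j \in F$ then the relational composition $O_i \circ O_j := \set{(p,r) : \exists q\ (p,q) \in O_i,\ (q,r) \in O_j}$ has out-degree $\leq \delta^+(O_i)\,\delta^+(O_j) < \infty$ at every point, so every orbit contained in the $G$-invariant set $O_i \circ O_j$ lies in $F$. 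Now declare $p \approx q$ iff $(p,q) \in O_i$ for some $O_i \in F$. By (a) the relation $\approx$ is reflexive, and by (c) it is transitive (if $(p,q) \in O_i \in F$ and $(q,r) \in O_j \in F$, the orbit of $(p,r)$ lies in $O_i \circ O_j$, hence in $F$); thus $\approx$ is a $G$-invariant preorder whose up-sets $U(p) = \set{q : p \approx q} = \bigcup_{O_i \in F} O_i(p)$ are finite, being finite unions of finite sets, and all of the same size $s$ by transitivity of $G$.

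The crux is that $\approx$ is in fact symmetric. Indeed, if $p \approx q$ then transitivity of $\approx$ gives $U(q) \sub U(p)$; since both are finite of size $s$, $U(q) = U(p)$, and since $q \in U(q) = U(p) \ni p$ we conclude $q \approx p$. Hence $\approx$ is a $G$-invariant equivalence relation, so $\bigcup_{O_i \in F} O_i$ is symmetric, i.e. equals $\bigcup_{O_i \in F} O_i^*$; as the $O_i$ partition $X \times X$, this forces $F$ to be closed under transpose, which is what we wanted.

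The step I expect to be the main obstacle is the symmetry of $\approx$: this is the only point where the two finiteness hypotheses — finitely many orbits on $X \times X$, and finite out-degree on the orbits in $F$ — are combined, via the finiteness (and constancy of size) of the $\approx$-classes. The rest — the dictionary between cosets of $VxV$ and out/in-degrees of $O_x$, and the reflexivity and transitivity of $\approx$ — is routine.
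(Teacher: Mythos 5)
Your proof is correct and takes essentially the same approach as the paper's, just phrased in terms of orbits on $G/V \times G/V$: your up-set $U(V)$ coincides with the paper's finite set $\mathcal{F}(V)$ of left cosets $yV$ for which $VyV$ has finitely many left cosets, the transitivity of your preorder is the paper's closure property \eqref{eq:Ftrans}, and your equal-cardinality pigeonhole ($U(xV)\subseteq U(V)$ with $|U(xV)|=|U(V)|<\infty$) is the same counting step as the paper's use of the injective map $yV \mapsto xyV$ carrying the finite set $\mathcal{F}(V)$ onto itself. The only cosmetic point is that countability of $G/V$ (hence separability of $G$) is never actually used in your argument, so that clause can simply be dropped.
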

\begin{proof}
Note first that if $H_1, H_2, H_3 \leq G$ and $H_1H_2$ contains finitely many $H_2$-cosets and $H_2H_3$ contains finitely many $H_3$-cosets, then $H_1H_3 \sub H_1H_2H_3$ also contains only finitely many $H_3$-cosets.

For $H \leq G$, denote
\[ \begin{split}
\mcF(H) &= \set{yH : HyH \text{ contains finitely many left } H\text{-cosets}} \\
  &= \set{yH : HH^y \text{ contains finitely many } H^y\text{-cosets}}.
\end{split} \]
Note that $\mcF(H)$ is exactly the union of double cosets that contain only finitely many left cosets and as $G$ has only finitely many double cosets of $V$, $\mcF(V)$ is finite. Now we show that
\begin{equation} \label{eq:Ftrans}
xV, yV \in \mcF(V) \implies xyV \in \mcF(V).
\end{equation}
Indeed, we have that $VV^x$ contains finitely many $V^x$ cosets and $VV^y$ contains finitely many $V^y$ cosets. Conjugating the latter by $x$, we obtain that $V^xV^{xy}$ contains finitely many $V^{xy}$ cosets. Now applying the observation in the beginning of the proof, we have that $VV^{xy}$ contains finitely many $V^{xy}$ cosets, i.e. $xyV \in \mcF(V)$.

Suppose now that $x \in G$ is such that $xV \in \mcF(V)$ and consider the map $\Phi \colon G/V \to G/V$ defined by $\Phi(yV) = xyV$. By \eqref{eq:Ftrans}, $\Phi(\mcF(V)) \sub \mcF(V)$ and as $\mcF(V)$ is finite and $\Phi$ is injective, in fact, $\Phi(\mcF(V)) = \mcF(V)$. Therefore there exists $yV \in \mcF(V)$ such that $\Phi(yV) = V$, i.e. $yV = x^{-1}V$. So we conclude that $x^{-1}V \in \mcF(V)$ or equivalently, that $VxV$ contains finitely many right cosets. The other direction of the statement is obtained by replacing $x$ with $x^{-1}$.
\end{proof}

As a corollary, we obtain that we can define the commensurator of an open subgroup in a Roelcke precompact group as
\begin{equation} \begin{split} \label{eq:defcomm}
\Comm_G(V) &= \bigcup \set{VgV : VgV \text{ contains finitely many left cosets of } V} \\
    &= \bigcup \set{VgV : VgV \text{ contains finitely many right cosets of } V}.
\end{split} \end{equation}

\begin{lemma} \label{l:commen}
Let $G$ be Roelcke precompact and $V \leq G$ be open. Then the following hold:
\begin{enumerate} \romanenum
\item \label{i:lcm:fi} $[\Comm_G(V) : V] < \infty$;
\item \label{i:lcm:idemp} $\Comm_G(\Comm_G(V)) = \Comm_G(V)$;
\item \label{i:lcm:2cms} If $V_1, V_2 \leq G$ are open and $\Comm_G(V_1)$ and $\Comm_G(V_2)$ are commensurate, then $\Comm_G(V_1) = \Comm_G(V_2)$.
\end{enumerate}
\end{lemma}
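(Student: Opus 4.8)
The plan is to establish the three items in order; item~\eqref{i:lcm:fi} carries the real content, and \eqref{i:lcm:idemp} and \eqref{i:lcm:2cms} then follow formally. Throughout I will use the elementary fact that if $A$ and $C$ are subgroups of a group $B$, then $[A : A \cap C] \leq [B : C]$, together with the tower law for indices. To prove~\eqref{i:lcm:fi}: by Roelcke precompactness $V$ has only finitely many double cosets in $G$, and by the description~\eqref{eq:defcomm} the subgroup $\Comm_G(V)$ is exactly the union of those finitely many double cosets $Vg_1V, \dots, Vg_kV$ that contain finitely many left cosets of $V$. A finite union of finite unions of left $V$-cosets is a finite union of left $V$-cosets, so $[\Comm_G(V) : V] < \infty$.

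For~\eqref{i:lcm:idemp}, write $W = \Comm_G(V)$; it is an open subgroup since it contains the open subgroup $V$. The inclusion $W \leq \Comm_G(W)$ holds for any subgroup, so only $\Comm_G(W) \subseteq W$ requires proof. Fix $g \in \Comm_G(W)$, so that $W \cap W^g$ has finite index in $W$; I claim $[V : V \cap V^g] < \infty$, which by~\eqref{eq:defcomm} gives $g \in \Comm_G(V) = W$. First, since $V \leq W$ we have $V \cap W^g = V \cap (W \cap W^g)$, so $[V : V \cap W^g] \leq [W : W \cap W^g] < \infty$. Second, $V^g \leq W^g$ and $[W^g : V^g] = [W : V] < \infty$ by~\eqref{i:lcm:fi}, so $[V \cap W^g : (V \cap W^g) \cap V^g] \leq [W^g : V^g] < \infty$, while $(V \cap W^g) \cap V^g = V \cap V^g$ because $V^g \leq W^g$. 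Multiplying the two indices via the tower law yields $[V : V \cap V^g] < \infty$, i.e. $VgV$ contains finitely many left cosets of $V$, as needed.

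Item~\eqref{i:lcm:2cms} is then immediate. If $\Comm_G(V_1)$ and $\Comm_G(V_2)$ are commensurate, they have the same commensurator by the standard fact recalled before Lemma~\ref{l:left-right}, so $\Comm_G(\Comm_G(V_1)) = \Comm_G(\Comm_G(V_2))$; but~\eqref{i:lcm:idemp} applied to each $V_i$ identifies $\Comm_G(\Comm_G(V_i))$ with $\Comm_G(V_i)$, whence $\Comm_G(V_1) = \Comm_G(V_2)$.

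The only step with any substance is~\eqref{i:lcm:idemp}, and there the sole difficulty is bookkeeping with the chain of finite-index intersections; the one genuine input is the finiteness supplied by~\eqref{i:lcm:fi}. I note that Lemma~\ref{l:left-right} is not actually needed in the argument, since membership in $\Comm_G(V)$ can be detected through the left-coset half of~\eqref{eq:defcomm} alone — though it is Lemma~\ref{l:left-right} that makes the two displayed descriptions in~\eqref{eq:defcomm} coincide in the first place.
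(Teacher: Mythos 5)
Your proof is correct and follows essentially the same route as the paper: part \eqref{i:lcm:fi} via Roelcke precompactness together with the left-coset description \eqref{eq:defcomm}, and part \eqref{i:lcm:2cms} formally from \eqref{i:lcm:idemp}. For \eqref{i:lcm:idemp} the paper simply writes that it follows from \eqref{i:lcm:fi} --- the intended one-liner being that $[\Comm_G(V):V]<\infty$ makes $V$ and $\Comm_G(V)$ commensurate, so they share a commensurator by the standard fact recalled before Lemma~\ref{l:left-right} --- and your index bookkeeping is just an explicit unfolding of that same observation.
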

\begin{proof}
\eqref{i:lcm:fi}. Since $V \leq \Comm_G(V)$, $V$ is open, and $G$ is Roelcke precompact, $\Comm_G(V)$ is a union of finitely many double cosets of $V$. By \eqref{eq:defcomm}, every double coset $VxV$ in $\Comm_G(V)$ contains only finitely many left cosets, so the claim follows.

\eqref{i:lcm:idemp}. Follows from \eqref{i:lcm:fi}.


\eqref{i:lcm:2cms}. We have
\[
\Comm_G(V_1) = \Comm_G(\Comm_G(V_1)) = \Comm_G(\Comm_G(V_2)) = \Comm_G(V_2).
\]
\end{proof}
We see that $\Comm_G(V)$ is the maximal subgroup of $G$ containing $V$ in which $V$ is of finite index. In view of \eqref{i:lcm:idemp}, we say that an open subgroup $H$ of a Roelcke precompact, non-archimedean group $G$ is \df{a commensurator} if $\Comm_G(H) = H$.

\subsection{Examples from model theory} \label{ss:examples-mth}
A natural class of permutation groups is obtained from model theory. A \df{signature} (or a \df{language}) $\mcL$ is a collection of relation and function symbols, where each symbol has a certain \df{arity} $n(\cdot)$. A structure for $\mcL$ is a set $\bX$ together with interpretations for the symbols: for each relation symbol $R$, a relation $R^\bX \sub \bX^{n(R)}$ and for each function symbol $F$, a function $F^\bX \colon \bX^{n(F)} \to \bX$. An \df{automorphism} of the structure $\bX$ is a permutation that preserves the relations and the functions. A structure is \df{relational} if the signature has no functions symbols.

Every closed permutation group can be obtained as the automorphism group of a relational structure: one just adds a relation for every orbit on $\bX^n$ for all $n$ (see, for example, \cite{Becker1996} for more details). An important model-theoretic characterization of the structures whose automorphism groups are oligomorphic is given by the following classical theorem (see \cite{Hodges1993} for a proof).

\begin{theorem}[Ryll-Nardzewski, Engeler, Svenonius]
For a countable structure $\bX$, the following are equivalent:
\begin{itemize}
\item $\bX$ is $\omega$-categorical;
\item $\Aut(\bX) \actson \bX$ is an oligomorphic permutation group.
\end{itemize}
\end{theorem}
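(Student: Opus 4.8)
The plan is to route through the Stone space $S_n(T)$ of complete $n$-types of $T := \mathrm{Th}(\bX)$ over the empty set, using the two classical bridges: on the model-theoretic side, the omitting types theorem together with downward Löwenheim--Skolem; on the permutation-group side, the elementary observation that tuples lying in the same $\Aut(\bX)$-orbit realize the same type (automorphisms preserve satisfaction of formulas). If $\bX$ is finite there is nothing to prove, so assume $\bX$ is infinite; then $T$ is a complete theory in a countable language with infinite models, and $\bX$ is one of its countable models.

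First I would prove the implication ``oligomorphic $\Rightarrow$ $\omega$-categorical''. Assuming $\Aut(\bX)$ is oligomorphic, each $S_n(T)$ is finite: the number of types \emph{realized} in $\bX$ is at most the finite number of orbits of $\Aut(\bX)$ on $\bX^n$, and every $p \in S_n(T)$ is in fact realized in $\bX$. For the latter point one first reduces to $n = 1$ by localizing at a finite tuple $\bar a \in \bX^k$ (the stabilizer $\Aut(\bX)_{\bar a}$ acts oligomorphically on $\bX$), and then runs a short compactness argument: if $O_1, \dots, O_m$ enumerate the orbits on $\bX$ with representatives $a_i$ and $p \neq \mathrm{tp}(a_i)$ for all $i$, choose $\phi_i(x) \in p$ with $\bX \models \neg\phi_i(a_i)$; since every element of $\bX$ lies in some $O_i$ and each $\phi_i$ is $\Aut(\bX)$-invariant, $\bX \models \neg \exists x\, \bigwedge_i \phi_i(x)$, hence $\bigwedge_i \phi_i$ is inconsistent with $T$, contradicting $p \in S_1(T)$. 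A finite Hausdorff space is discrete, so every type in $S_n(T)$ is isolated; by the omitting types theorem every countable model of $T$ is atomic, hence prime, hence unique up to isomorphism, so $T$ is $\omega$-categorical.

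For the converse, assume $\bX$ is $\omega$-categorical. If some $S_n(T)$ were infinite, then, being an infinite compact Hausdorff totally disconnected space, it would contain a non-isolated point $p$; the omitting types theorem then yields a countable model of $T$ omitting $p$, while downward Löwenheim--Skolem applied inside a model realizing $p$ yields a countable model realizing $p$, contradicting $\omega$-categoricity. So each $S_n(T)$ is finite, every type is isolated, and $\bX$ is atomic, hence model-theoretically homogeneous: every elementary bijection between finite tuples of $\bX$ extends to an automorphism. Consequently two $n$-tuples of $\bX$ are in the same $\Aut(\bX)$-orbit iff they realize the same type, and since $S_n(T)$ is finite, $\Aut(\bX)$ has only finitely many orbits on $\bX^n$.

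The genuine model-theoretic input, and hence the only real obstacle, is concentrated in two classical black boxes: the omitting types theorem (used to pass from ``$p$ non-isolated'' to ``some countable model omits $p$'', and dually from ``all types isolated'' to ``every countable model is atomic'') and the existence, uniqueness, and homogeneity of atomic/prime models. Everything else is bookkeeping with orbits, complete types, and the Stone topology. In a paper of this kind I would simply invoke these facts from \cite{Hodges1993} rather than reprove them.
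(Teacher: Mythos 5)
This is a background result that the paper does not prove at all: it is stated as the classical Ryll-Nardzewski--Engeler--Svenonius theorem with a pointer to \cite{Hodges1993}, so there is no in-paper argument to compare yours against. Your proposal is the standard textbook proof via the Stone spaces $S_n(T)$, and it is correct: the orbit count bounds the number of realized types, your localization-plus-compactness argument showing every type is realized in $\bX$ is sound (oligomorphicity of $\Aut(\bX)_{\bar a}$ on $\bX$ does follow from finiteness of orbits on higher powers), and the converse via a non-isolated point in an infinite compact Hausdorff space, omitting types, and homogeneity of the countable atomic model is the usual route. Two minor remarks: the step ``all types isolated $\Rightarrow$ every model is atomic'' needs no omitting types theorem --- it is immediate from the definition of atomic, since every realized type is isolated; and the facts you do black-box (omitting types for non-isolated types over a countable language, uniqueness and homogeneity of countable atomic models) are exactly the ones the paper implicitly delegates to \cite{Hodges1993}, so invoking them rather than reproving them is entirely in the spirit of the text.
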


A structure $\bX$ is called \df{$\omega$-categorical} if $\bX$ is the unique (up to isomorphism) countable model of the first-order theory of $\bX$.

An especially attractive situation is when $\bX$ is \df{homogeneous} in the following strong sense: every isomorphism between finite substructures of $\bX$ extends to a full automorphism of $\bX$ (sometimes those structures are called \df{ultrahomogeneous}). A classical theorem of \Fraisse describes the homogeneous structures as the \Fraisse limits of classes of finite structures satisfying a certain amalgamation property \cite{Hodges1993}*{Section~7.4}. A homogeneous structure $\bX$ is $\omega$-categorical iff for every $n$ there are only finitely many isomorphism types of substructures of $\bX$ generated by $n$ elements. In particular, every homogeneous structure in a finite, relational signature is $\omega$-categorical. We also see that all examples given in the introduction are $\omega$-categorical. We add one slightly less known, but instructive, example to the list.

\smallskip \noindent \textbf{Examples (cont.):}
\begin{enumerate} \romanenum
\setcounter{enumi}{\value{ExamplesEnum}}
\item \label{i:ex:Cherlin} (Cherlin and Hrushovski) Consider a signature with infinitely many relation symbols $\set{E_n}_{n \geq 1}$, where $E_n$ is of arity $2n$. Let $\mcX$ be the class of all finite structures in this signature, where each $E_n$ is interpreted as an equivalence relation on \emph{subsets} of the structure of size $n$ with at most $2$ equivalence classes. For every $k \in \N$, there are only finitely many structures in $\mcX$ of size $k$, so the \Fraisse limit $\bX$ of $\mcX$ is $\omega$-categorical. What is remarkable about this structure is that its automorphism group $G$ has a quotient isomorphic to $(\Z / 2\Z)^\N$. The reason for this is that $G$ acts on the sets $\bX^{[n]} / E_n$ all of which have size $2$. Extending this example, Evans and Hewitt~\cite{Evans1990} constructed for every profinite, metrizable group $H$ an oligomorphic group $G$ such that $H$ is a quotient of $G$.

\end{enumerate}

Typical homogeneous structures that are often encountered in the literature which are \emph{not} $\omega$-categorical are the discrete structures that arise as approximations of continuous ones: the rational Urysohn metric space, countable measured Boolean algebras, etc.

\section{Representations of permutation groups} \label{s:GelfRaikov}
A \df{unitary representation} of a topological group $G$ is a \df{strongly continuous} homomorphism $\pi \colon G \to U(\mcH)$ to the unitary group of some Hilbert space $\mcH$ (i.e. the map $G \to \mcH$, $g \mapsto \pi(g)\xi$ is continuous for every $\xi \in \mcH$). A permutation group $G \actson \bX$ has some natural representations defined using the action on $\bX$, namely, $G \actson \ell^2(\bX^n)$ for $n \in \N$. Those representations clearly separate the points of $G$. In this section, we show that \emph{irreducible} representations also separate points. (A representation $\pi$ is \df{irreducible} if $\mcH(\pi)$ does not have non-trivial subspaces invariant under $\pi$.) This can be considered as a version of the classical Gelfand--Raikov~\cite{Gelfand1943} theorem for subgroups of $S_\infty$.

If $\pi$ is a representation of $G$ and $V \leq G$, denote by $\mcH_V(\pi)$ the closed subspace of fixed points of $V$. We start with a simple but key lemma a version of which had been previously used by Lieberman~\cite{Lieberman1972} and Glasner--Weiss~\cite{Glasner2005a}.
\begin{lemma} \label{l:dense-fp}
Let $G$ be a subgroup of $S_\infty$ and $\set{V_i : i \in \N}$ be a basis at the identity for $G$ consisting of open subgroups. Then for any continuous unitary representation $\pi$ of $G$, the space $\bigcup_i \mcH_{V_i}(\pi)$ is dense in $\mcH(\pi)$.
\end{lemma}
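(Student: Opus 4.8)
The plan is to use the fact that each $V_i$ is an open subgroup, so (after passing to a cofinal subsequence) we may assume $V_{i+1} \leq V_i$ and $\bigcap_i V_i = \{1\}$, and to average over each $V_i$ using the fact that, although $G$ is not locally compact, the stabilizer-type subgroups $V_i$ carry a natural probability measure coming from their action: indeed $V_i$ is a \emph{compact} group? No — it need not be. The correct tool is that $V_i$, being a closed subgroup of $S_\infty$ in the induced topology and open in $G$, acts on the countable set $G/V_i$ with the coset $V_i$ as a fixed point, but more to the point, $V_i$ itself, as a topological group, need not be compact. So instead I would argue as follows.

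First I would reduce to showing that a single vector $\xi \in \mcH(\pi)$ with $\|\xi\| = 1$ can be approximated arbitrarily well by vectors fixed by some $V_i$. Fix $\varepsilon > 0$. By strong continuity of $\pi$, the orbit map $g \mapsto \pi(g)\xi$ is continuous at $1$, so there is an open neighborhood of the identity, hence some $V_i$ in the basis, such that $\|\pi(g)\xi - \xi\| < \varepsilon$ for all $g \in V_i$. The key point is now that $V_i$ is an open subgroup of the Roelcke precompact — or at least non-archimedean — group $G$; I want to produce a $V_i$-fixed vector close to $\xi$. Here I would invoke that $V_i$ is itself a closed subgroup of $S_\infty$ and, crucially, that $\overline{\mathrm{conv}}\{\pi(g)\xi : g \in V_i\}$ is a closed convex subset of the ball of radius $1$ centered appropriately; it lies within $\varepsilon$ of $\xi$, and it is invariant under the $V_i$-action. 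Let $\eta$ be its (unique) element of minimal norm. Then $\eta$ is fixed by every $\pi(g)$, $g \in V_i$, by uniqueness of the minimal-norm element of a $V_i$-invariant closed convex set (the standard argument: $\pi(g)\eta$ also has minimal norm, hence equals $\eta$). And $\|\eta - \xi\| \leq \varepsilon$ since $\xi$ is within $\varepsilon$ of the closed convex hull — more precisely, every element of $\{\pi(g)\xi : g \in V_i\}$ is within $\varepsilon$ of $\xi$, so the whole closed convex hull is, so $\eta$ is. Thus $\eta \in \mcH_{V_i}(\pi)$ and $\|\eta - \xi\| \leq \varepsilon$, which proves density.

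I would then note that this argument needs no Roelcke precompactness or oligomorphy, only that $G \leq S_\infty$ (to have a basis of \emph{open subgroups}), matching the generality of the statement. The main obstacle — and the reason one must use the minimal-norm / Ryll-Nardzewski-type fixed point argument rather than simply integrating $\int_{V_i} \pi(g)\xi \, dg$ — is precisely that $V_i$ is not compact, so there is no Haar probability measure to average against; the closed-convex-hull trick is what replaces it. One subtlety to check carefully is that the set $\{\pi(g)\xi : g \in V_i\}$ is bounded (it lies on the unit sphere since $\pi$ is unitary), so its closed convex hull is bounded, closed, and convex in the Hilbert space $\mcH(\pi)$, hence weakly compact, which is what guarantees existence (and uniqueness) of the minimal-norm element and legitimizes the fixed-point conclusion.
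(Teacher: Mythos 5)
Your proof is correct and follows essentially the same route as the paper: find $V_i$ with $\pi(V_i)\xi$ in the $\varepsilon$-ball around $\xi$ by strong continuity, then take the unique minimal-norm element of the closed convex hull of the orbit, which is $V_i$-fixed and $\varepsilon$-close to $\xi$. The initial hesitation about compactness of $V_i$ is harmless, and the preliminary reduction to a nested cofinal subsequence is unnecessary but does no damage.
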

\begin{proof}
Let $\xi_0 \in \mcH(\pi)$ be an arbitrary vector and fix $\eps > 0$. As the representation is continuous, there is $i \in \N$ such that $\pi(V_i) \xi_0$ is contained in the ball with radius $\eps$ around $\xi_0$. Let $C$ be the closure of the convex hull of $\pi(V_i) \xi_0$ and let $\eta$ be the unique element of least norm in $C$. As $\pi(V_i)C = C$ and $\pi$ preserves the norm, $\eta$ is a fixed point of $\pi(V_i)$. Also, by the choice of $V_i$, $\nm{\eta - \xi_0} \leq \eps$.
\end{proof}

Recall that a continuous function $\phi \colon G \to \C$ is called \df{positive definite} if for every $x_1, \ldots, x_n \in G$ and $c_1, \ldots, c_n \in \C$,
\begin{equation} \label{eq:posdef}
\sum_{i, j = 1}^n \phi(x_j^{-1}x_i)c_i \conj{c_j} \geq 0,
\end{equation}
i.e. the matrix $\big(\phi(x_j^{-1}x_i)\big)_{i, j}$ is positive-definite. If $\pi$ is a representation of $G$ and $\xi \in \mcH(\pi)$, the function $x \mapsto \ip{\pi(x)\xi, \xi}$ is positive definite and conversely, the GNS construction produces from a positive definite function $\phi$ a representation $\pi$ and a \df{cyclic vector} $\xi \in \mcH(\pi)$ (i.e. such that the linear span of the orbit $\pi(G)\xi$ is dense in $\mcH(\pi)$) such that
\[
\phi(x) = \ip{\pi(x)\xi, \xi} \quad \text{for all } x \in G
\]
(see \cite{Bekka2008}*{Appendix C} for more details). In particular, $|\phi(x)| \leq \phi(1)$ for all $x \in G$. We now have the following basic observation.
\begin{lemma} \label{l:const-dblcst}
Let $G$ be a group and $H \leq G$ a subgroup. If $\phi$ is a positive definite function that is constant on $H$, then it is constant on double cosets of $H$.
\end{lemma}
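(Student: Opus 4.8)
The plan is to feed $\phi$ into the GNS construction recalled just above and observe that its being constant on $H$ forces the cyclic vector to lie in $\mcH_H(\pi)$. Since $1 \in H$, I would write $c = \phi(1) = \phi(h)$ for every $h \in H$ and let $(\pi, \xi)$ be the associated GNS pair, so that $\phi(x) = \ip{\pi(x)\xi, \xi}$ for all $x \in G$. For $h \in H$ this gives $\ip{\pi(h)\xi, \xi} = \phi(h) = c = \nm{\xi}^2$; in particular the left-hand side is real and equal to $\nm{\pi(h)\xi}\,\nm{\xi}$, so the Cauchy--Schwarz inequality is saturated. Explicitly,
\[
\nm{\pi(h)\xi - \xi}^2 = \nm{\pi(h)\xi}^2 - 2\Re\ip{\pi(h)\xi, \xi} + \nm{\xi}^2 = 2\nm{\xi}^2 - 2\nm{\xi}^2 = 0,
\]
so $\pi(h)\xi = \xi$ for all $h \in H$, i.e.\ $\xi \in \mcH_H(\pi)$.

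With that in hand I would finish by a one-line computation: for $g \in G$ and $h_1, h_2 \in H$, unitarity of $\pi$ yields
\[
\phi(h_1 g h_2) = \ip{\pi(h_1)\pi(g)\pi(h_2)\xi, \xi} = \ip{\pi(g)\pi(h_2)\xi, \pi(h_1^{-1})\xi} = \ip{\pi(g)\xi, \xi} = \phi(g),
\]
using that $h_2, h_1^{-1} \in H$ and that $\xi$ is fixed by $\pi(H)$. As $h_1, h_2$ range over $H$, this says precisely that $\phi$ is constant on the double coset $HgH$.

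I do not expect any real obstacle; the one point that needs a moment's care is that $\ip{\pi(h)\xi, \xi}$ is not merely bounded by $\nm{\xi}^2$ but equal to it, which is what licenses the passage from the equality case of Cauchy--Schwarz to $\pi(h)\xi = \xi$. Should one prefer to bypass the GNS machinery altogether, the same step can be carried out by hand: applying the positive-definiteness inequality \eqref{eq:posdef} to the triple $h^{-1}, 1, x$ with coefficients $1, -1, 0$ shows that the associated Hermitian form vanishes on $(1, -1, 0)$, and pairing this vector against the third basis vector (via Cauchy--Schwarz for positive semidefinite forms) gives that $\phi$ is invariant under right translation by $H$; together with $\phi(g^{-1}) = \conj{\phi(g)}$ this yields invariance under left translation as well, hence $\phi(h_1 g h_2) = \phi(g)$.
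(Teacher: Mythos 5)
Your proposal is correct and follows essentially the same route as the paper's proof: realize $\phi$ via GNS, use $\phi(h)=\phi(1)=\nm{\xi}^2$ to conclude $\pi(h)\xi=\xi$ for $h\in H$ (you merely make explicit the norm computation the paper leaves implicit), and then conclude with the same one-line invariance computation on $HgH$.
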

\begin{proof}
Let $\pi$ and $\xi \in \mcH(\pi)$ be such that $\phi(x) = \ip{\pi(x)\xi, \xi}$ for $x \in G$. Then by the hypothesis, for any $h \in H$, $\ip{\pi(h)\xi, \xi} = \ip{\pi(1)\xi, \xi} = \nm{\xi}^2$, i.e. $\xi \in \mcH_H(\pi)$. Now we have for any $x \in G$ and $h_1, h_2 \in H$:
\[
\phi(h_1 x h_2) = \ip{\pi(h_1 x h_2)\xi, \xi} = \ip{\pi(x)\pi(h_2)\xi, \pi(h_1^{-1})\xi}
    = \ip{\pi(x)\xi, \xi} = \phi(x),
\]
finishing the proof.
\end{proof}

Let
\[
\mcP_1(G) = \set{\phi \colon G \to \C : \phi \text{ is positive definite and } \phi(1) = 1}
\]
and if $V \leq G$ is open, let also
\[
\mcP_V(G) = \set{\phi \in \mcP_1(G) : \phi(v) = 1 \text{ for all } v \in V}.
\]
By Lemma~\ref{l:const-dblcst}, we can consider $\mcP_V(G)$ as a subset of $\ell^\infty(V \backslash G / V)$. $\mcP_V(G)$ is convex and bounded and by the definition \eqref{eq:posdef}, it is also closed in the weak$^*$ topology of $\ell^\infty(V \backslash G / V)$ and thus compact.
\begin{lemma} \label{l:extr-points}
If $\phi$ is an extreme point of $\mcP_V(G)$, then it is also an extreme point of $\mcP_1(G)$.
\end{lemma}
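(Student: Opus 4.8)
The plan is to show that $\mcP_V(G)$ is a \emph{face} of the convex set $\mcP_1(G)$, and then to invoke the elementary fact that an extreme point of a face of a convex set is an extreme point of the ambient set. Concretely, I will prove: if $\phi \in \mcP_V(G)$ and $\phi = t\phi_1 + (1-t)\phi_2$ with $\phi_1, \phi_2 \in \mcP_1(G)$ and $0 < t < 1$, then necessarily $\phi_1, \phi_2 \in \mcP_V(G)$. (Here $\mcP_1(G)$ is indeed convex, since a convex combination of positive definite functions is positive definite and the value at $1$ is preserved.)

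For the face property, fix $v \in V$. Since $\phi \in \mcP_V(G)$ we have $\phi(v) = 1$, so $t\phi_1(v) + (1-t)\phi_2(v) = 1$. On the other hand, the basic estimate $|\psi(x)| \leq \psi(1)$ for positive definite $\psi$, recalled just before the statement, gives $|\phi_i(v)| \leq \phi_i(1) = 1$, hence $\operatorname{Re}\phi_i(v) \leq 1$ for $i = 1, 2$. Taking real parts in $t\phi_1(v) + (1-t)\phi_2(v) = 1$ exhibits $1$ as a convex combination of the two reals $\operatorname{Re}\phi_1(v)$ and $\operatorname{Re}\phi_2(v)$, each at most $1$; this forces both of them to equal $1$, and together with $|\phi_i(v)| \leq 1$ this yields $\phi_i(v) = 1$. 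As $v \in V$ was arbitrary, $\phi_1, \phi_2 \in \mcP_V(G)$.

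It then remains to conclude: let $\phi$ be an extreme point of $\mcP_V(G)$ and suppose $\phi = \frac12(\phi_1 + \phi_2)$ with $\phi_1, \phi_2 \in \mcP_1(G)$. By the previous paragraph (with $t = \frac12$) we have $\phi_1, \phi_2 \in \mcP_V(G)$, and since $\phi$ is extreme in $\mcP_V(G)$ we get $\phi_1 = \phi_2 = \phi$. Hence $\phi$ is an extreme point of $\mcP_1(G)$.

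I do not expect any genuine difficulty here: the argument rests only on the convexity of $\mcP_1(G)$ and the inequality $|\phi(x)| \leq \phi(1)$, both already available, plus the one-line convexity observation above; the only point worth spelling out with a little care is the deduction that a complex number of modulus at most $1$ and real part $1$ must equal $1$, which is immediate.
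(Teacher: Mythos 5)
Your proof is correct and is essentially the paper's argument: the paper likewise takes a convex decomposition $\phi = t\psi_1 + (1-t)\psi_2$ in $\mcP_1(G)$, uses $|\psi_i(v)| \leq \psi_i(1) = 1$ and real parts to force $\psi_i(v) = 1$ for all $v \in V$, i.e.\ it proves exactly your ``face'' property and concludes the same way. The only cosmetic difference is that you name the face principle and reduce to the midpoint case, which is an equivalent formulation of extremality.
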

\begin{proof}
Suppose that $\phi \in \mcP_V(G)$ and $\psi_1, \psi_2 \in \mcP_1(G)$ and $t \in (0, 1)$ are such that $\phi = t \psi_1 + (1 - t)\psi_2$. For every $v \in V$, we have
\[
1 = \phi(v) = t \Re \psi_1(v) + (1 - t) \Re \psi_2(v) \leq t \psi_1(1) + (1 - t) \psi_2(1) = 1,
\]
showing that we must have equality in the middle, i.e. $\Re \psi_1(v) = \Re \psi_2(v) = 1$ for all $v \in V$. As $|\psi_1(v)|, |\psi_2(v)| \leq 1$, this implies that $\psi_1(v) = \psi_2(v) = 1$. Thus $\psi_1, \psi_2 \in \mcP_V(G)$, proving the lemma.
\end{proof}

We now see that the classical proof of Gelfand--Raikov extends to our situation.
\begin{proof}[Proof of Theorem~\ref{th:GelfRaikov}]
It suffices to show that for every $x \in G$, $x \neq 1$, there is an irreducible representation $\pi$ such that $\pi(x) \neq \pi(1)$. Recall that a representation $\pi$ with a cyclic unit vector $\xi$ is irreducible iff the corresponding positive definite function is an extreme point of $\mcP_1(G)$ \cite{Bekka2008}*{Theorem~C.5.2}. Let now $1 \neq x \in G$. Let $V \leq G$ be an open subgroup such that $x \notin V$. Consider the positive definite function $\chi_V$ (the characteristic function of $V$) which corresponds to the representation $G \actson \ell^2(G/V)$ with cyclic vector $\delta_V$. We have that $\chi_V \in \mcP_V(G)$ and $\chi_V(x) \neq \chi_V(1)$. Consider now the weak$^*$ closed, convex set $C = \set{\phi \in \mcP_V(G) : \phi(x) = \phi(1)}$.
As $C \subsetneq \mcP_V(G)$, by the Krein--Milman theorem, there exists an extreme point $\phi$ of $\mcP_V(G)$ such that $\phi \notin C$. By Lemma~\ref{l:extr-points}, $\phi$ is also an extreme point of $\mcP_1(G)$, producing the required irreducible representation.
\end{proof}

\section{Representations of oligomorphic groups} \label{s:rep-olig}
Let $G$ be a subgroup of $S_\infty$ and $G \actson Y$ be a continuous action on a discrete, countable set $Y$. There is a natural associated representation of $G$ on $\ell^2(Y)$ and if $Y = \bigsqcup_i Y_i$ is the decomposition of $Y$ into orbits, we have that $\ell^2(Y) = \boplus_i \ell^2(Y_i)$. Therefore we can as well suppose that the action $G \actson Y$ is transitive; in this case, the corresponding representation is just the \df{quasi-regular representation} $\ell^2(G/V)$ for some open subgroup $V$ of $G$. In order to describe those, we recall the notion of induced representation.

Let $G$ be a topological group and $H$ be an \df{open} subgroup of $G$. Let $\sigma$ be a representation of $H$. The \df{induced representation} $\Ind_H^G(\sigma)$ is defined as follows. Let $T$ be a complete system of left coset representatives of $H$ in $G$. Let $M$ be the space of all functions $f \colon G \to \mcH(\sigma)$ for which
\begin{equation} \label{eq:induced-cond}
 f(gh) = \sigma(h^{-1})f(g) \quad \text{for all } g \in G, h \in H.
\end{equation}
In particular, for $f \in M$, $\nm{f(x)}$ is constant on left cosets of $H$. For $f \in M$, define
\begin{equation} \label{eq:induced-nm}
\nm{f} = \Big(\sum_{g \in T} \nm{f(g)}^2 \Big)^{1/2}
\end{equation}
and note that because of the above observation, $\nm{f}$ does not depend on the choice of $T$. Let $\mcH = \set{f \in M : \nm{f} < \infty}$. Then the representation $\Ind_H^G(\sigma)$ on the Hilbert space $\mcH$ is defined by
\[
\big(\Ind_H^G(\sigma)(g) \cdot f \big)(x) = f(g^{-1}x).
\]
As $H$ is open, the representation $\Ind_H^G(\sigma)$ is continuous. For example, the quasi-regular representation $\ell^2(G/H)$ can be written as $\Ind_H^G(1_H)$, where $1_H$ is the trivial one-dimensional representation of $H$.

We note that as we only need to induce from open subgroups $H \leq G$, the homogeneous space $G/H$ always carries the counting measure and we are spared the measure-theoretic complications that occur in the locally compact setting. For more details on induced representations, see, for example, \cite{Bekka2008}*{Appendix~E}.

Suppose now that $G$ is Roelcke precompact and fix an open subgroup $V \leq G$. Let $H$ be a subgroup of $G$ such that $V \unlhd H$. As for normal subgroups double cosets coincide with left cosets, $V$ has finite index in $H$. Denote by $K$ the finite group $H/V$ and by $\lambda_K$ the left-regular representation of $K$, which we will also consider as a representation of $H$. Then using the theorem about induction in stages (\cite{Bekka2008}*{Theorem~E.2.4}), we have
\begin{equation} \label{eq:dbl-ind}
\ell^2(G/V) \cong \Ind_V^G(1_V) \cong \Ind_H^G\big(\Ind_V^H(1_V)\big) \cong \Ind_H^G(\lambda_K).
\end{equation}
As $\lambda_K$ splits as a sum of irreducible representations of $K$ (and in fact all irreducible representations of $K$ occur as direct summands), we are led to consider representations of the form $\Ind_H^G(\sigma)$, where $H$ is an open subgroup of $G$ and $\sigma$ is an irreducible representation of some finite quotient of $H$. There is a general criterion known as the Mackey irreducibility criterion for determining whether representations of the form $\Ind_H^G(\sigma)$ are irreducible for $H$ an open subgroup of $G$ and $\sigma$ an irreducible \emph{finite-dimensional} representation of $H$. The criterion is usually stated for discrete groups but works equally well in this more general setting. It is due to Mackey~\cite{Mackey1951} when $\sigma$ is one-dimensional and to Corwin~\cite{Corwin1975} in the general case. Below we state and prove a special version of the criterion adapted to our situation.

If $H \leq G$, $g \in G$, and $\sigma$ is a representation of $H$, define the representation $\sigma^g$ of $H^g$ by
\[
\sigma^g(x) = \sigma(x^{g^{-1}}).
\]
\begin{prop} \label{p:ind-reps}
Let $G$ be a Roelcke precompact subgroup of $S_\infty$. Then the following hold:
\begin{enumerate} \romanenum
\item \label{i:pir:irred} If $H \leq G$ is a commensurator, $V \unlhd H$ is open, and $\sigma$ is a representation of $H/V$, then $\Ind_H^G(\sigma)$ is irreducible iff $\sigma$ is.
\item \label{i:pir:equiv} If $H_1, H_2 \leq G$ are commensurators, $V_1 \unlhd H_1$, $V_2 \unlhd H_2$ are open, and $\sigma_1$, $\sigma_2$ are irreducible representations of $H_1/V_1$, $H_2/V_2$, respectively, then $\Ind_{H_1}^G(\sigma_1) \cong \Ind_{H_2}^G(\sigma_2)$ iff there exists $g \in G$ such that $H_2 = H_1^g$ and $\sigma_2 \cong \sigma_1^g$.
\end{enumerate}
\end{prop}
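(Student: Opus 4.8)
The plan is to adapt the classical Mackey–Corwin machinery to the Roelcke precompact setting, using the fact (Lemma~\ref{l:commen}) that commensurators are open subgroups in which the given open normal subgroup has finite index, so that all the representations involved factor through \emph{finite} quotients. The essential point is the \emph{intertwiner computation}: for open subgroups $H_1, H_2 \leq G$ and representations $\sigma_i$ of $H_i$, the space of intertwiners $\Hom_G(\Ind_{H_1}^G \sigma_1, \Ind_{H_2}^G \sigma_2)$ is described by a Mackey-type formula as a product (over the double cosets $H_2 \backslash G / H_1$) of spaces $\Hom_{H_2 \cap H_1^g}(\sigma_2, \sigma_1^g)$. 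In the locally compact case this requires care with measures, but here $G/H_i$ carries the counting measure, so the computation is purely algebraic and goes through verbatim as for discrete groups; I would state it as a preliminary lemma. The key subtlety is convergence: an intertwiner is given by a function on $G$, bi-equivariant in the appropriate sense, and one must check it defines a bounded operator. This is where Roelcke precompactness enters decisively.

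For part~\eqref{i:pir:irred}, apply the intertwiner formula with $H_1 = H_2 = H$ and $\sigma_1 = \sigma_2 = \sigma$: then $\End_G(\Ind_H^G \sigma)$ is a product, indexed by double cosets $HgH$, of the spaces $\Hom_{H \cap H^g}(\sigma, \sigma^g)$, with the identity double coset contributing $\End_H(\sigma)$. The claim is that \emph{only} the identity double coset contributes anything: if $g \notin H$, then since $H$ is a commensurator, $V$ is not of finite index in $V^g \cap V$ — more precisely $H \cap H^g$ has infinite index in $H$ (this is exactly where $\Comm_G(V) = H$ is used, via the description \eqref{eq:defcomm}), so $H \cap H^g$ does not contain $V$ with finite index; since $\sigma$ is trivial on $V$ but $\sigma^g$ need not be trivial on $V \cap H^g$... — rather, the cleaner route is to observe that a nonzero intertwiner supported on $HgH$ would have to be a function whose $\ell^2$-norm over $G/H_1$ is finite, forcing $HgH$ to contain finitely many left cosets of $H$, i.e. $g \in \Comm_G(H) = H$ by Lemma~\ref{l:commen}\eqref{i:lcm:idemp}. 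Hence $\End_G(\Ind_H^G \sigma) \cong \End_H(\sigma) = \End_K(\sigma)$ for $K = H/V$, and the latter is $\C$ iff $\sigma$ is irreducible; the conclusion follows from Schur's lemma (noting $\Ind_H^G\sigma$ decomposes as a sum of irreducibles, e.g. via complete reducibility in this setting, so that $\End_G = \C$ does imply irreducibility).

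For part~\eqref{i:pir:equiv}, the ``if'' direction is immediate: conjugation by $g$ gives a unitary isomorphism $\Ind_{H_1}^G \sigma_1 \to \Ind_{H_1^g}^G \sigma_1^g$. For ``only if'', suppose $\Ind_{H_1}^G \sigma_1 \cong \Ind_{H_2}^G \sigma_2$; then by the intertwiner formula there is a double coset $H_2 g H_1$ on which the intertwiner is nonzero, hence $\Hom_{H_2 \cap H_1^g}(\sigma_2, \sigma_1^g) \neq 0$, and moreover — by the same finiteness argument as above, applied on both sides using that $H_1$ and $H_2$ are commensurators — $H_2 g H_1$ contains finitely many left $H_1$-cosets and finitely many right $H_2$-cosets, so $H_1 \cap H_2^{g^{-1}}$ has finite index in both $H_1$ and $H_2^{g^{-1}}$; thus $H_1$ and $H_2^{g^{-1}}$ are commensurate, whence $\Comm_G(H_1) = \Comm_G(H_2^{g^{-1}})$, i.e. $H_1 = H_2^{g^{-1}}$ by Lemma~\ref{l:commen}\eqref{i:lcm:2cms} (after checking $\Comm_G(H_2^{g^{-1}}) = \Comm_G(V_2^{g^{-1}}) = H_2^{g^{-1}}$). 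Replacing $g$ we get $H_2 = H_1^g$; and now $\Hom_{H_1^g}(\sigma_2, \sigma_1^g) \neq 0$ with both sides irreducible forces $\sigma_2 \cong \sigma_1^g$ by Schur.

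The main obstacle I anticipate is making the intertwiner/Mackey computation fully rigorous at the level of (possibly infinite) sums — in particular verifying that a nonzero component on a double coset $H_2 g H_1$ genuinely produces a bounded intertwining operator only when the double coset is ``small'' (finitely many cosets), and organizing the bookkeeping between left and right coset counts so that Lemma~\ref{l:left-right} and the characterization \eqref{eq:defcomm} of the commensurator can be invoked cleanly on both sides. Everything else is standard finite-group representation theory (Schur's lemma, complete reducibility of representations of the finite groups $H_i/V_i$) combined with the structural facts about commensurators from Lemma~\ref{l:commen}.
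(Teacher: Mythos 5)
Your proposal is correct in substance, and it reaches the statement by a genuinely different packaging than the paper: you set up the Mackey--Corwin intertwiner decomposition over double cosets and compute commutants, whereas the paper never introduces kernels at all --- for part (i) it directly identifies the space $\mcH_V(\pi)$ of $V$-invariant vectors in $\pi = \Ind_H^G(\sigma)$ with the functions supported on $H$ (equation \eqref{eq:HV}), hence with $\sigma$ as an $H$-representation, and concludes irreducibility from the cyclicity of $\mcH_V(\pi)$; for part (ii) it extracts from a $V_2$-invariant vector and, symmetrically, a $V_1$-invariant vector two ``largeness'' relations attached to possibly \emph{different} group elements, which it then has to combine via a conjugation computation, Lemma~\ref{l:left-right} and Lemma~\ref{l:commen}\eqref{i:lcm:2cms}. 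Your route buys the cleaner slogan that only the trivial double coset contributes to $\mathrm{End}_G(\Ind_H^G\sigma)$, and in (ii) it gets both finiteness conditions for the \emph{same} double coset by applying the argument to $T$ and to $T^*$ (using that the kernel of $T^*$ at $x^{-1}$ is the adjoint of the kernel of $T$ at $x$), which streamlines the combination step. Two caveats, both fixable and both anticipated in your last paragraph. First, in the infinite-index setting the intertwiner space is not literally the full product over double cosets of the spaces $\mathrm{Hom}_{H_2\cap H_1^g}(\sigma_2,\sigma_1^g)$; all you need, and all that is true, is that the map from a bounded intertwiner to its family of double-coset components is injective (functions supported on a single coset span a dense subspace) and that any component supported on a double coset containing infinitely many cosets vanishes. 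Second, that vanishing --- your convergence worry --- is exactly where the hypothesis that $\sigma_i$ is trivial on the finite-index open $V_i$ must be used: restricting to the translates $V_1 x H_2$, where the left equivariance acts by the identity, each of the left $H_2$-cosets contributes the same nonzero amount to $\nm{Tf}^2$, forcing $V_1 x H_2$, hence $H_1 x H_2$, to contain only finitely many such cosets, after which \eqref{eq:defcomm} and the commensurator hypothesis apply; boundedness alone would not give this for a general $\sigma$. Once that computation is written out (it is in effect the paper's argument for \eqref{eq:HV}), your proof is complete and equivalent in content to the paper's.
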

\begin{proof}
\eqref{i:pir:irred}. $(\Rightarrow)$ If $\sigma = \sigma_1 \oplus \sigma_2$, then $\Ind_H^G(\sigma) = \Ind_H^G(\sigma_1) \oplus \Ind_H^G(\sigma_2)$.

$(\Leftarrow)$ Suppose $\sigma$ is irreducible and denote $\pi = \Ind_H^G(\sigma)$. We first show that
\begin{equation} \label{eq:HV}
\mcH_V(\pi) = \set{f \in \mcH(\pi) : f(x) = 0 \text{ for } x \notin H}.
\end{equation}
Suppose first that $f(x) = 0$ for all $x \notin H$. Let $g \in V$. If $x \notin H$, then $g^{-1}x \notin H$ and $0 = f(x) = f(g^{-1}x) = (\pi(g)f)(x)$. If $x \in H$, then
\[
(\pi(g)f)(x) = f(g^{-1}x) = f(xx^{-1}g^{-1}x) = \sigma(x^{-1}gx) f(x) = f(x)
\]
as $\sigma$ is trivial on $V$. For the other direction, suppose that $f \in \mcH(\pi)$ is $V$-invariant and $x \in G$ is such that $f(x) \neq 0$. If $x \notin H$, then as $H$ is its own commensurator, by \eqref{eq:defcomm}, $HxH$ contains infinitely many left cosets of $H$. Since $[H : V] < \infty$, $VxH$ also contains infinitely many left cosets of $H$. As $f$ is $V$-invariant, its norm must be infinite, contradiction. We thus obtain that
\begin{equation} \label{eq:sigma-is}
f \mapsto f(1) \text{ is an isomorphism between } \pi(H)|_{\mcH_V(\pi)} \text{ and } \sigma.
\end{equation}

Now suppose that $\pi$ is reducible, i.e. $\mcH(\pi) = \mcK \oplus \mcK^\perp$, where $\mcK$ is $\pi(G)$-invariant. As the projection onto $\mcK$ commutes with $\pi(V)$, we have
\[
\mcH_V(\pi) = (\mcH_V(\pi) \cap \mcK) \oplus (\mcH_V(\pi) \cap \mcK^\perp)
\]
and the two parts on the right-hand side are $\pi(H)$-invariant. By \eqref{eq:sigma-is} and the irreducibility of $\sigma$, either $\mcH_V(\pi) \sub \mcK$ or $\mcH_V(\pi) \sub \mcK^\perp$. Since by \eqref{eq:HV}, $\mcH_V(\pi)$ is cyclic for $\pi$, we have that $\mcK = \mcH(\pi)$ or $\mcK^\perp = \mcH(\pi)$, proving that $\pi$ is irreducible.

\eqref{i:pir:equiv}. $(\Leftarrow)$ Let $T \colon \mcH(\sigma_1) \to \mcH(\sigma_2)$ be a unitary operator that realizes the equivalence $\sigma_1^g \cong \sigma_2$ (i.e. $\sigma_2 T = T \sigma_1^g$). Let $\pi_i = \Ind_{H_i}^G(\sigma_i)$ and define the map $U \colon \mcH(\pi_1) \to \mcH(\pi_2)$ by
\[
U(f)(x) = Tf(x^{g^{-1}})
\]
It is not difficult to check that $U$ is a well-defined unitary equivalence between $\pi_1$ and $\pi_2$.

$(\Rightarrow)$ Suppose that $\pi_1$ and $\pi_2$ are equivalent. Then there exists a non-zero $f \in \mcH(\pi_1)$ which is invariant under $\pi_1(V_2)$. By the same argument as in \eqref{i:pir:irred}, we obtain that there is $g \in G$ such that $V_2 g H_1$ contains only finitely many left cosets of $H_1$, or, equivalently, $[V_2 : V_2 \cap H_1^g] < \infty$. Symmetrically, we find $h \in G$ such that $[V_1 : V_1 \cap H_2^h] < \infty$. For two subgroups $A, B \leq G$, say that \df{$A$ is large in $B$} if $[B : A \cap B] < \infty$. As $[H_i : V_i] < \infty$, we have that
\begin{align}
H_1^g &\text{ is large in } H_2, \quad \text{and} \label{eq:H1inH2} \\
H_2^h &\text{ is large in } H_1, \label{eq:H2inH1}
\end{align}
so by conjugating \eqref{eq:H1inH2} by $h$ and using transitivity, we can conclude that $H_1^{hg}$ is large in $H_1$. Applying Lemma~\ref{l:left-right}, we obtain that $H_1$ and $H_1^{hg}$ are commensurate and therefore equal (by Lemma~\ref{l:commen}~\eqref{i:lcm:2cms}). Conjugating \eqref{eq:H2inH1} by $g^{-1}h^{-1}$, we obtain that $H_2^{g^{-1}}$ is large in $H_1^{g^{-1}h^{-1}} = H_1$, while conjugating \eqref{eq:H1inH2} by $g^{-1}$, we see that $H_1$ is large in $H_2^{g^{-1}}$, so that $H_1$ and $H_2^{g^{-1}}$ are commensurate and therefore equal. So finally, $H_2 = H_1^g$.

Now let $\pi_1' = \Ind_{H_1^g}^G(\sigma_1^g) = \Ind_{H_2}^G(\sigma_1^g)$. By the $(\Leftarrow)$ direction and the hypothesis, $\pi_1' \cong \pi_1 \cong \pi_2$. Let $U \colon \mcH(\pi_1') \to \mcH(\pi_2)$ realize the equivalence. Then we must have $U(\mcH_{V_2}(\pi_1')) = \mcH_{V_2}(\pi_2)$. By \eqref{eq:sigma-is} and the fact that $U$ commutes with the $H_2$-action, we have that $\sigma_2 \cong \sigma_1^g$.
\end{proof}

We are now ready to prove the main theorem.
\begin{theorem} \label{th:main-th}
Suppose that $G$ is a Roelcke precompact subgroup of $S_\infty$. Then every unitary representation of $G$ is a sum of irreducible representations of the form $\Ind_H^G(\sigma)$, where $H$ is a commensurator and $\sigma$ is an irreducible representation of $H$ that factors through a finite quotient of $H$.
\end{theorem}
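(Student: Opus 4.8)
The plan is to establish two statements, which together give the theorem: (a) every unitary representation of $G$ is a direct sum of irreducibles; and (b) every irreducible representation of $G$ has the form $\Ind_H^G(\sigma)$ with $H$ a commensurator and $\sigma$ an irreducible representation factoring through a finite quotient of $H$.

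For (a), let $\pi$ be a nonzero representation. By Lemma~\ref{l:dense-fp} there is an open $V\leq G$ with $\mcH_V(\pi)\neq 0$; fix a unit vector $\xi\in\mcH_V(\pi)$ and let $\phi=\langle\pi(\cdot)\xi,\xi\rangle$. By Lemma~\ref{l:const-dblcst}, $\phi\in\mcP_V(G)$, and since $G$ is Roelcke precompact the set $V\backslash G/V$ is finite, so $\mcP_V(G)$ is a compact convex subset of the \emph{finite-dimensional} space $\ell^\infty(V\backslash G/V)$ and hence the convex hull of its extreme points; thus $\phi=\sum_{j=1}^m t_j\psi_j$ is a finite convex combination of them. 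By Lemma~\ref{l:extr-points} each $\psi_j$ is extreme in $\mcP_1(G)$, hence (as in the proof of Theorem~\ref{th:GelfRaikov}) corresponds to an irreducible representation $\pi_j$ with cyclic unit vector $\xi_j$. Uniqueness of the GNS construction identifies the cyclic subrepresentation of $\pi$ generated by $\xi$ with the cyclic subrepresentation of $\bigoplus_j\pi_j$ generated by $\bigoplus_j\sqrt{t_j}\,\xi_j$; the former is therefore isomorphic to a subrepresentation of a finite sum of irreducibles, hence is itself a finite sum of irreducibles. A standard maximality argument (take a maximal family of pairwise orthogonal irreducible subrepresentations of $\pi$; if its closed span is proper, apply the preceding to a $V$-fixed vector in the orthogonal complement, which exists by Lemma~\ref{l:dense-fp}, to contradict maximality) shows that $\pi$ is a sum of irreducibles.

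For (b), let $\pi$ be irreducible; write $P_V$ for the orthogonal projection of $\mcH(\pi)$ onto $\mcH_V(\pi)$, and note that each $\mcH_V(\pi)$ is finite-dimensional (any nonzero $V$-fixed vector is cyclic for $\pi$, and $g\mapsto P_V\pi(g)P_V$ is constant on $V$-double cosets, of which there are finitely many). Call an open subgroup $V$ \emph{adapted} to $\pi$ if $\mcH_V(\pi)\neq 0$ and $P_V\pi(x)P_V=0$ for all $x\notin\Comm_G(V)$. Suppose for the moment that an adapted $V$ exists, and set $H=\Comm_G(V)$ — a commensurator by Lemma~\ref{l:commen}~\eqref{i:lcm:idemp} — and $W=\bigcap_{h\in H}V^h$, the kernel of the action of $H$ on $H/V$; since $[H:V]<\infty$ by Lemma~\ref{l:commen}~\eqref{i:lcm:fi}, $W$ is a finite intersection of conjugates of $V$, hence open, with $W\unlhd H$ and $[H:W]<\infty$. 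Let $\mathcal M=\overline{\mathrm{span}}\{\pi(h)\mcH_V(\pi):h\in H\}$; this is a $\pi(H)$-invariant subspace, and since $\pi(h)\mcH_V(\pi)=\mcH_{V^h}(\pi)\subseteq\mcH_W(\pi)$ the subgroup $W$ acts trivially on it, so $\sigma:=\pi(H)|_{\mathcal M}$ factors through the finite group $H/W$. Using adaptedness one checks that $\pi(g)\mathcal M\perp\pi(g')\mathcal M$ whenever $gH\neq g'H$, and (by irreducibility of $\pi$ and $\mcH_V(\pi)\neq 0$) that $\mcH(\pi)=\bigoplus_{gH\in G/H}\pi(g)\mathcal M$; the map sending $f$ to $\sum_g\pi(g)f(g)$ (the sum over a set of coset representatives of $H$) is then a unitary equivalence $\Ind_H^G(\sigma)\cong\pi$. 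Finally $\sigma$ is irreducible since $\pi$ is, so $\pi$ has the required form.

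It remains to produce an adapted subgroup, and here — following Lieberman — a weak limit point is used; this is the main obstacle. Since $G$ is Roelcke precompact, every open subgroup is contained in only finitely many open subgroups (cf. the proof of Corollary~\ref{c:ctbl-open}), so strictly increasing chains of open subgroups are finite, and one may choose an open $V$ with $\mcH_V(\pi)\neq 0$ that is maximal with this property. If $\|P_V\pi(x)P_V\|=1$ for some $x\notin\Comm_G(V)$, then some unit vector of $\mcH_V(\pi)$ is fixed by $V$ and by $V^{x^{-1}}$; moreover $V^{x^{-1}}\not\subseteq V$, since otherwise $V\subseteq V^x\subseteq V^{x^2}\subseteq\cdots$ would be an infinite strictly increasing chain of open subgroups — so this vector is fixed by the strictly larger open subgroup $\langle V,V^{x^{-1}}\rangle$, contradicting maximality. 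Hence $\|P_V\pi(x)P_V\|\leq c<1$ for all $x\notin\Comm_G(V)$, there being only finitely many double cosets. To upgrade this to $P_V\pi(x)P_V=0$ one argues by contradiction: if $P_V\pi(x_0)P_V\neq 0$, pick infinitely many distinct left cosets $v_ix_0V$ inside $Vx_0V$ and form the unit vectors $\eta_i=\pi(v_ix_0)\xi$ for a suitable $\xi\in\mcH_V(\pi)$; these have a common nonzero $V$-fixed part $P_V\eta_i=P_V\pi(x_0)\xi$, so a weak cluster point $\zeta$ of $(\eta_i)$ is nonzero, and analyzing the matrix coefficients of $\zeta$ as limits of those of the $\eta_i$ — where Lemma~\ref{l:left-right} controls which double cosets occur and the uniform bound $c<1$ forces the contribution of cosets outside $\Comm_G(V)$ to be strictly deficient — one concludes that $\zeta$ is fixed by an open subgroup strictly larger than $V$, contradicting maximality once more. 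This final estimate is the delicate point of the proof.
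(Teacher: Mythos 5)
Your part (a) is correct and is in fact a different (and slightly more structured) route than the paper's: since $V\backslash G/V$ is finite, $\mcP_V(G)$ sits in a finite-dimensional space, so the matrix coefficient of a $V$-fixed vector is a finite convex combination of extreme points, which by Lemma~\ref{l:extr-points} are extreme in $\mcP_1(G)$; together with GNS uniqueness and a maximality argument this gives complete reducibility, whereas the paper never isolates this step and instead extracts induced subrepresentations directly and applies Zorn. Your observation that $\mcH_V(\pi)$ is finite-dimensional for irreducible $\pi$, and your passage from an ``adapted'' subgroup $V$ to $\pi\cong\Ind_H^G(\sigma)$ (orthogonality of the translates $\pi(g)\mathcal M$, the explicit intertwiner), are also correct and essentially reproduce the paper's construction. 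The problem is entirely in the production of an adapted subgroup.

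Choosing $V$ maximal among open subgroups with $\mcH_V(\pi)\neq 0$ does not force adaptedness, and the conclusion you aim for in the weak-limit step is false in general, so the ``delicate final estimate'' cannot be carried out from maximality alone. Concretely, take $G=S_\infty$ acting on $\bX$ and $\pi=\ell^2(\bX)\cong\Ind_{G_a}^G(1)$, which is irreducible by Proposition~\ref{p:ind-reps} since $\Comm_G(G_a)=G_a$. Let $V$ be the setwise stabilizer of $\set{a,b}$. Then $\mcH_V(\pi)$ is spanned by $\delta_a+\delta_b$, the only open subgroup properly containing $V$ is $G$ itself (a direct computation: $V$ together with any $g\notin V$ generates $S_\infty$), and $G$ has no invariant vectors, so $V$ is maximal in your sense; moreover $\Comm_G(V)=V$. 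Yet for $x_0$ fixing $a$ and sending $b$ outside $\set{a,b}$ one has $x_0\notin\Comm_G(V)$ while $\ip{\pi(x_0)(\delta_a+\delta_b),\delta_a+\delta_b}=1\neq 0$, so $P_V\pi(x_0)P_V\neq 0$ and $V$ is not adapted. Running your construction here gives $\eta_i=(\delta_a+\delta_{c_i})/\sqrt2$ with the $c_i$ distinct, whose weak limit is $\delta_a/\sqrt2$: it is fixed by $G_a$, which is \emph{incomparable} with $V$, not strictly larger, so no contradiction with maximality arises --- and none can, since this $V$ genuinely is maximal. The missing idea is a different selection principle: the paper fixes a nonzero vector $\xi_0$, fixed by some open subgroup, whose matrix coefficient $\phi_{\xi_0}$ takes the \emph{minimum possible number of distinct values}, and sets $V=\set{g:\pi(g)\xi_0=\xi_0}$; the weak cluster point $\eta$ of $\pi(h_ig)\xi_0$ then satisfies $\nm{\eta}<\nm{\xi_0}$, its matrix coefficient takes values only among those of $\phi_{\xi_0}$ but misses the value $\nm{\xi_0}^2$, and this contradicts minimality, yielding exactly your adaptedness statement $\phi_{\xi_0}=0$ off $\Comm_G(V)$. (In the example above, this minimization selects $\delta_a$ rather than $\delta_a+\delta_b$, i.e.\ $V=G_a$ rather than the setwise stabilizer.)
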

\begin{proof}
Let $\pi$ be a representation of $G$. For $\xi \in \mcH(\pi)$, let $\phi_\xi(x) = \ip{\pi(x)\xi, \xi}$ be the positive definite function on $G$ associated to $\xi$. If $\xi$ is fixed by an open subgroup $V_0 \leq G$, then by Lemma~\ref{l:const-dblcst}, $\phi_\xi$ is constant on double cosets of $V_0$, so the function $\phi_\xi$ takes only finitely many values. By Lemma~\ref{l:dense-fp}, there exists some non-zero $\xi \in \mcH(\pi)$ which is fixed by an open subgroup. Choose now a non-zero $\xi_0 \in \mcH(\pi)$ such that $\phi_{\xi_0}$ takes the \emph{minimum possible number of distinct values}. Let
\[
V = \set{g \in G : \pi(g)\xi_0 = \xi_0} = \set{g \in G : \phi_{\xi_0}(g) = \nm{\xi_0}^2}
\]
and note that as the image of $\phi_{\xi_0}$ is discrete, $V$ is open.
\begin{claim*}
If $g \notin \Comm_G(V)$, then $\phi_{\xi_0}(g) = 0$.
\end{claim*}
\begin{proof}
Let $g \notin \Comm_G(V)$ be arbitrary. By \eqref{eq:defcomm}, $VgV$ contains infinitely many left cosets of $V$. Towards a contradiction, suppose that $\ip{\pi(g)\xi_0, \xi_0} \neq 0$. Let $h_1gV, h_2gV, \ldots$ be distinct left cosets of $V$ with $h_i \in V$ for all $i$. Set $\xi_i = \pi(h_i g)\xi_0$ and let $\eta$ be a weak limit point of the $\xi_i$s. By passing to a subsequence, we can assume that $\xi_i \to^w \eta$. Since
\[
\ip{\xi_i, \xi_0} = \ip{\pi(h_i g)\xi_0, \xi_0}
    = \ip{\pi(g)\xi_0, \pi(h_i^{-1})\xi_0} = \ip{\pi(g)\xi_0, \xi_0}
\]
is bounded away from $0$, we have that $\eta \neq 0$. Next we observe that the set of values of $\phi_\eta$ is a subset of the set of values of $\phi_{\xi_0}$. Indeed, fix $i \in \N$ and note that for all $x \in G$, we have
\[ \begin{split}
\ip{\pi(x)\eta, \xi_i} &= \ip{\eta, \pi(x^{-1})\xi_i} \\
  &= \lim_{j \to \infty} \ip{\xi_j, \pi(x^{-1}h_ig)\xi_0} \\
  &= \lim_{j \to \infty} \ip{\pi(h_j g) \xi_0, \pi(x^{-1}h_ig)\xi_0} \\
  &= \lim_{j \to \infty} \phi_{\xi_0}(g^{-1}h_i^{-1} x h_j g).
\end{split} \]
Now, taking limits as $i \to \infty$,
\[
\phi_\eta(x) = \ip{\pi(x)\eta, \eta} = \lim_{i \to \infty} \ip{\pi(x)\eta, \xi_i} = \lim_{i \to \infty} \lim_{j \to \infty } \phi_{\xi_0}(g^{-1}h_i^{-1} x h_j g).
\]
As the image of $\phi_{\xi_0}$ is discrete, $\phi_\eta(x)$ is a value of $\phi_{\xi_0}$.

On the other hand, note that $\nm{\eta} < \nm{\xi_0}$. Indeed, if $\nm{\eta} = \nm{\xi_0} = \nm{\xi_i}$, then $\xi_i$ converges to $\eta$ in norm, so for all $\eps > 0$, there exists $N$ such that for $i, j > N$, $\nm{\xi_i - \xi_j} < \eps$. As $\nm{\xi_i - \xi_j}$ can take only finitely many values, the sequence $\xi_i$ is eventually constant. This contradicts the assumption that $h_i g$ and $h_j g$ are in different left cosets of $V$. It follows that the set of values of $\phi_\eta$ is a strict subset of the set of values of $\phi_{\xi_0}$ (as $\nm{\xi_0}^2$ is a value of $\phi_{\xi_0}$ which is not a value of $\phi_\eta$), contradicting the choice of $\xi_0$. This completes the proof of the claim.
\end{proof}

Put now $H = \Comm_G(V)$ and $V' = \bigcap_{h \in H} V^h$. Then $V' \unlhd H$ and as $V$ has finite index in $H$, $V'$ also has finite index in $H$. Let $\mcK = \Span\set{\pi(g)\xi_0 : g \in H}$ and note that $\mcK$ is finite-dimensional.

\begin{claim*}
$\pi(x)\mcK \perp \pi(y)\mcK$ if $xH \neq yH$.
\end{claim*}
\begin{proof}
Let $g, h \in H$. We have:
\[
\ip{\pi(xg)\xi_0, \pi(yh)\xi_0} = \ip{\pi(h^{-1}y^{-1}xg)\xi_0, \xi_0} = \phi_{\xi_0}(h^{-1}y^{-1}xg) = 0.
\]
The last equality follows from the fact that if $y^{-1}x \notin H$, then $h^{-1}y^{-1}xg \notin H$ and the previous claim.
\end{proof}

Note now that $\mcK$ is fixed pointwise by $V'$. Indeed, if $g \in H$ and $h \in V'$, by the definition of $V'$, $hgV = gV$, so there exists $h' \in V$ such that $hg = gh'$ and $\pi(h)\pi(g)\xi_0 = \pi(gh')\xi_0 = \pi(g)\xi_0$. Thus we obtain a representation of $H$ on $\mcK$ that factors through $H/V'$. Denote this representation by $\sigma$. Let $T$ be a system of left coset representatives for $H$. We verify that the partial isometry $U \colon \mcH(\Ind_H^G(\sigma)) \to \mcH(\pi)$ given by
\[
U(f) = \boplus_{x \in T} \pi(x)f(x)
\]
does not depend on the choice of $T$ and is a unitary equivalence between $\Ind_H^G(\sigma)$ and the cyclic subrepresentation of $\pi$ generated by $\xi_0$. That it does not depend on $T$ follows from \eqref{eq:induced-cond}; to check that it intertwines the representations, observe that
\[ \begin{split}
U(\Ind_H^G(\sigma)(g)f)(x) &= \boplus_{x \in T} \pi(x) f(g^{-1}x) \\
    &= \boplus_{x \in g^{-1}T} \pi(gx) f(x) \\
    &= \pi(g)\big(U(f)(x)\big).
\end{split} \]
So we obtained that $\pi$ contains a subrepresentation that is isomorphic to $\Ind_H^G(\sigma)$, where $\sigma$ factors through a finite quotient of $H$. By passing to a subrepresentation if necessary, we can assume that $\sigma$ is irreducible. Then $\Ind_H^G(\sigma)$ is irreducible by Proposition~\ref{p:ind-reps}. Now using Zorn's lemma, we conclude that $\pi$ is actually a sum of such representations.
\end{proof}

\section{Open subgroups and imaginaries} \label{s:examples}
In the previous section, we saw that in order to describe completely the representations of an automorphism group of an $\omega$-categorical structure, it suffices to understand the lattice of its open subgroups. It is most natural to understand the open subgroups in terms of the structure the group acts on. As we will see below (and as is well known), the open subgroups of the automorphism group correspond precisely to the imaginary elements of the structure. A particularly simple situation is when we can see all the open subgroups already in the structure itself, that is, when the structure \df{eliminates imaginaries} in a suitable weak sense. We proceed now with the formal definitions.

Let $\bX$ be an $\omega$-categorical structure and $G = \Aut(\bX)$. Recall that by the Ryll--Nardzewski theorem, a set $A \sub \bX^n$ is (first-order) definable iff it is $G$-invariant. We are going to use the two terms interchangeably. A tuple $\bar b \in \bX^n$ is \df{algebraic} over $\bar a \in \bX^m$ if the orbit $G_{\bar a} \cdot \bar b$ is finite. The \df{algebraic closure} of $\bar a$ is the set of all $b \in \bX$ algebraic over $\bar a$. As $\bX$ is $\omega$-categorical, the algebraic closure of a finite set is always finite. An \df{imaginary element of $\bX$} (or just an imaginary) is an equivalence class of some definable equivalence relation on a definable subset of $\bX^n$. If $\theta$ is a definable equivalence relation and $\bar a \in \bX^n$, $\bar a / \theta$ will denote the $\theta$-equivalence class of $\bar a$. If $\alpha = \bar a / \theta$ is an imaginary, we will denote by $G_\alpha$ the stabilizer of $\alpha$ in $G$. As $G_{\bar a} \leq G_\alpha$, $G_\alpha$ is open in $G$. Conversely, if $V \leq G$ is an open subgroup, there exists $\bar a$ such that $G_{\bar a} \leq V$. If we define the equivalence relation $\theta$ on $G \cdot \bar a$ by
\[
(g_1 \cdot \bar a) \eqrel{\theta} (g_2 \cdot \bar a) \iff g_1V = g_2V,
\]
then $\theta$ is $G$-invariant and $V = G_{\bar a/\theta}$.

Recall that the structure $\bX$ admits \df{weak elimination of imaginaries} if for every imaginary $\alpha$, there exists a first-order formula $\phi(\bar x, \bar y)$ such that the set
\begin{equation} \label{eq:def-im}
D(\phi, \alpha) = \big\{\bar c \in \bX^n : \alpha = \set{\bar x \in \bX^m : \phi(\bar x, \bar c)}\big\}
\end{equation}
is finite and non-empty (that is, for every imaginary, we can choose finitely many tuples to represent it). The following lemma is folklore but I have not been able to find a suitable reference. I am grateful to Martin Hils for explaining it to me.

\begin{lemma} \label{l:weak-el}
Suppose that $\bX$ is an $\omega$-categorical structure that admits weak elimination of imaginaries. Then for every open subgroup $V \leq G$, there exists a unique finite, algebraically closed substructure $\bA \sub \bX$ such that $G_{(\bA)} \leq V \leq G_{\bA}$ and $G_\bA = \Comm_G(V)$. In particular, every commensurator is of the form $G_\bA$ for some $\bA$.
\end{lemma}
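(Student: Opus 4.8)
The plan is to construct $\bA$ as the algebraic closure of a suitable tuple and then verify each of the three claims: $G_{(\bA)} \leq V \leq G_\bA$, $G_\bA = \Comm_G(V)$, and uniqueness. First I would fix an open subgroup $V \leq G$. By the discussion preceding the lemma, there is a tuple $\bar a$ with $G_{\bar a} \leq V$, and an associated imaginary $\alpha = \bar a/\theta$ with $V = G_\alpha$. Apply weak elimination of imaginaries to $\alpha$: there is a formula $\phi(\bar x,\bar y)$ such that the set $D(\phi,\alpha)$ of \eqref{eq:def-im} is finite and non-empty. Let $\bA$ be the (finite, by $\omega$-categoricity) algebraic closure of the union of all tuples in $D(\phi,\alpha)$; this is a finite, algebraically closed substructure. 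The key point is that $D(\phi,\alpha)$ is $V$-invariant (any $g \in V = G_\alpha$ fixes $\alpha$, hence permutes the tuples $\bar c$ with $\alpha = \{\bar x : \phi(\bar x,\bar c)\}$), and conversely $\alpha$ — and therefore $V = G_\alpha$ — is determined by the finite set $D(\phi,\alpha)$, since $\alpha = \{\bar x : \phi(\bar x,\bar c)\}$ for any $\bar c \in D(\phi,\alpha)$. So $V$ is exactly the setwise stabilizer of the finite set $D(\phi,\alpha)$, and since $\bA$ is definable from $D(\phi,\alpha)$, we get $G_{D(\phi,\alpha)} \leq G_\bA$, i.e. $V \leq G_\bA$. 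The inclusion $G_{(\bA)} \leq V$ follows because $\bA$ contains some tuple in $D(\phi,\alpha)$, so fixing $\bA$ pointwise fixes that tuple, which forces $\alpha$ to be fixed.

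Next I would compute $\Comm_G(V)$. Since $\bA$ is finite and algebraically closed, $G_{(\bA)}$ has finite index in $G_\bA$: indeed $G_\bA$ acts on the finite set $\bA$ with kernel $G_{(\bA)}$. Combined with $G_{(\bA)} \leq V \leq G_\bA$, this shows $V$ has finite index in $G_\bA$, hence $G_\bA \leq \Comm_G(V)$. For the reverse inclusion, suppose $g \in \Comm_G(V)$. Then $V$ and $V^g$ are commensurate, so $V^g$ and $G_{(\bA)}^g = G_{(g\bA)}$ are commensurate with $V$, hence with $G_{(\bA)}$; thus $G_{(\bA)}$ and $G_{(g\bA)}$ are commensurate. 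Two pointwise stabilizers of finite algebraically closed sets being commensurate should force the sets to be equal: if $G_{(\bA)} \cap G_{(g\bA)} = G_{(\bA \cup g\bA)}$ has finite index in $G_{(\bA)}$, then the orbit of every element of $g\bA$ under $G_{(\bA)}$ is finite, so $g\bA \subseteq \acl(\bA) = \bA$, and symmetrically $\bA \subseteq g\bA$, whence $g\bA = \bA$ and $g \in G_\bA$. So $\Comm_G(V) = G_\bA$.

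Finally, uniqueness: if $\bB$ is another finite algebraically closed substructure with $G_{(\bB)} \leq V \leq G_\bB$, then by the same finite-index argument $G_{(\bB)}$ has finite index in $G_\bB$, hence in $V$, hence $V$ has finite index in $G_\bB$, so $G_\bB \leq \Comm_G(V) = G_\bA$; symmetrically $G_{(\bB)} \leq V \leq G_\bB$ gives $G_{(\bB)} \leq G_\bA = \Comm_G(V)$, and running the commensurability argument (with $g = 1$) between $G_{(\bA)}$ and $G_{(\bB)}$ — both commensurate with $V$ — yields $\bA = \bB$. The last sentence of the lemma, that every commensurator has the form $G_\bA$, is then immediate from $\Comm_G(V) = G_\bA$ together with Lemma~\ref{l:commen}~\eqref{i:lcm:idemp}.

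I expect the main obstacle to be the careful bookkeeping around weak elimination of imaginaries: specifically, verifying cleanly that $V = G_\alpha$ coincides with the setwise stabilizer of the finite set $D(\phi,\alpha)$ (in both directions), and making sure the passage to the algebraic closure $\bA$ does not lose the inclusion $V \leq G_\bA$ — one must check that $\bA$, being $\acl$ of a $V$-invariant finite set, is itself $V$-invariant. The commensurability-implies-equality step for algebraically closed sets is the other place needing care, but it is a standard consequence of $\omega$-categoricity and the definition of algebraic closure.
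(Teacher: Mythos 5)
Your proof is correct and follows essentially the same route as the paper: take $\bA$ to be the algebraic closure of $D(\phi,\alpha)$, check $G_{(\bA)} \leq V \leq G_\bA$ with $[G_\bA : G_{(\bA)}] < \infty$, and use the algebraic closedness of $\bA$ to identify $\Comm_G(V)$ with $G_\bA$ and to get uniqueness. The only (harmless) variation is in the inclusion $\Comm_G(V) \leq G_\bA$, which you obtain by showing that commensurability of the pointwise stabilizers $G_{(\bA)}$ and $G_{(g \cdot \bA)}$ forces $g \cdot \bA = \bA$, whereas the paper argues that $G_\bA$ has no proper overgroup of finite index; both hinge on the same fact that elements outside an algebraically closed set have infinite orbit under its pointwise stabilizer.
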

\begin{proof}
By the preceding discussion, there exists an imaginary $\alpha$ such that $V = G_\alpha$. Let $\phi$ be a formula such that the set $D(\phi, \alpha)$ defined in \eqref{eq:def-im} is finite and non-empty. Let $\bA$ be the algebraic closure of $D(\phi, \alpha)$. Then clearly $G_{(\bA)} \leq V$. Also, from the definition of $D(\phi, \alpha)$, $V \cdot D(\phi, \alpha) = D(\phi, \alpha)$ and thus $V \cdot \bA = \bA$, so that $V \leq G_\bA$. The group $G_{(\bA)}$ has finite index in $G_\bA$ because it is equal to the kernel of the homomorphism $G_\bA \to \Aut(\bA)$ given by restriction. To prove that $G_\bA = \Comm_G(V)$, it suffices to check that $G_\bA$ has no proper supergroups in which it is of finite index. To see this, note that, by the definition of algebraically closed, if $g \cdot \bA \neq \bA$, then the orbit $G_\bA \cdot (g \cdot \bA)$ is infinite, showing that $G_\bA$ has infinite index in $\langle G_\bA, g \rangle$. The uniqueness of $\bA$ follows from the fact that the commensurator of $V$ is uniquely defined and two different algebraically closed substructures have different stabilizers.
\end{proof}
In fact, the converse of Lemma~\ref{l:weak-el} also holds (see \cite{Hodges1993}*{Exercise~7.3.16}) and for the examples we consider below, the conclusion of the lemma can easily be verified directly.

If the structure $\bX$ is moreover homogeneous (as defined in Section~\ref{s:olig-groups}), then for every finite substructure $\bA \sub \bX$, the canonical homomorphism $G_\bA \to \Aut(\bA)$ (that is, the restriction to $\bA$) is surjective and we have the following.

\begin{cor} \label{c:wk-elim}
Let $\bX$ be an $\omega$-categorical, homogeneous structure that admits weak elimination of imaginaries and $G = \Aut(\bX)$. Then the following is a complete list of the irreducible representations of $G$:
\begin{multline*}
\set{\Ind_{G_\bA}^G(\sigma) : \bA \sub \bX \text{ is finite, algebraically closed and } \\
    \sigma \text{ is an irreducible representation of } \Aut(\bA)}.
\end{multline*}
Moreover, this list is without repetitions (only one substructure appears of each isomorphism type).
\end{cor}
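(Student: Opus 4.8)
The plan is to combine Theorem~\ref{th:main-th} with Lemma~\ref{l:weak-el} and the surjectivity of the restriction map for homogeneous structures, then read off the isomorphism classification from Proposition~\ref{p:ind-reps}. First I would observe that by Theorem~\ref{th:main-th}, every irreducible representation of $G$ is of the form $\Ind_H^G(\sigma)$ where $H$ is a commensurator and $\sigma$ is an irreducible representation of $H$ factoring through a finite quotient $H/V'$. By Lemma~\ref{l:weak-el}, since $\bX$ admits weak elimination of imaginaries, every commensurator is of the form $G_\bA$ for a (unique) finite, algebraically closed substructure $\bA \sub \bX$. So the list in the statement at least contains all irreducibles arising from $\sigma$ that factor specifically through $\Aut(\bA) = G_\bA/G_{(\bA)}$; I must check that every $\sigma$ factoring through an arbitrary finite quotient of $G_\bA$ in fact factors through $\Aut(\bA)$, and that conversely every irreducible representation of $\Aut(\bA)$ gives an irreducible $\Ind_{G_\bA}^G(\sigma)$.

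For the second point, note that $G_{(\bA)} \unlhd G_\bA$, it is open, and $[G_\bA : G_{(\bA)}] < \infty$ (it is the kernel of $G_\bA \to \Aut(\bA)$), so by Proposition~\ref{p:ind-reps}\eqref{i:pir:irred}, $\Ind_{G_\bA}^G(\sigma)$ is irreducible exactly when $\sigma$ is an irreducible representation of the finite group $G_\bA/G_{(\bA)} \cong \Aut(\bA)$ — here we use homogeneity of $\bX$ to identify $G_\bA/G_{(\bA)}$ with the full automorphism group $\Aut(\bA)$ (the restriction map is surjective). For the first point, suppose $\sigma$ is an irreducible representation of $G_\bA$ factoring through $G_\bA/V'$ for some open normal $V' \unlhd G_\bA$ of finite index; a priori $V'$ need not contain $G_{(\bA)}$. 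But I can replace $\bA$: the subgroup $V'$ is open, so by Lemma~\ref{l:weak-el} applied to $V'$ there is an algebraically closed $\bB$ with $G_{(\bB)} \le V' \le G_\bB$ and $G_\bB = \Comm_G(V')$. Since $V' \unlhd G_\bA$ and $V'$ has finite index in $G_\bA$, we get $G_\bA \le \Comm_G(V') = G_\bB$, hence (taking algebraic closures) $\bB \subseteq \acl(\bA) = \bA$; combined with $G_{(\bB)} \le V' \le G_{(\bA)} \cdot(\text{finite})$... — more cleanly: since $\Ind_{G_\bA}^G(\sigma)$ is assumed irreducible and $G_\bA$ is its own commensurator (it equals $\Comm_G(V)$ in Theorem~\ref{th:main-th}'s construction), the proof of Proposition~\ref{p:ind-reps}\eqref{i:pir:irred} forces $V'$ to be such that $G_\bA$ is the commensurator, and in the homogeneous case the minimal such $V'$ normal in $G_\bA$ is $G_{(\bA)}$ itself, so $\sigma$ does factor through $\Aut(\bA)$.

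Finally, for the "without repetitions" clause I apply Proposition~\ref{p:ind-reps}\eqref{i:pir:equiv}: $\Ind_{G_{\bA_1}}^G(\sigma_1) \cong \Ind_{G_{\bA_2}}^G(\sigma_2)$ iff there is $g \in G$ with $G_{\bA_2} = G_{\bA_1}^g = G_{g\cdot\bA_1}$ and $\sigma_2 \cong \sigma_1^g$. By the uniqueness clause of Lemma~\ref{l:weak-el} (two distinct algebraically closed substructures have distinct stabilizers), $G_{\bA_2} = G_{g\cdot\bA_1}$ gives $\bA_2 = g \cdot \bA_1$, so $\bA_1 \cong \bA_2$ and conjugation by $g$ carries the pair $(\bA_1,\sigma_1)$ to $(\bA_2,\sigma_2)$; hence picking one $\bA$ per isomorphism type and running over irreducible $\sigma \in \widehat{\Aut(\bA)}$ gives each irreducible of $G$ exactly once (two choices of $\sigma$ for the same $\bA$ that are conjugate under $G_\bA$ acting on itself would have to be checked, but $\sigma_1^g$ for $g \in G_\bA$ means $g$ normalizes $\bA$ and acts as an inner-type twist on $\Aut(\bA)$ — one should either note this still yields genuinely different points of the dual when $\sigma_1 \not\cong \sigma_1^g$, or phrase the parametrization as $\Aut(\bA)$-conjugacy classes of irreducibles; since $G_\bA$ maps onto $\Aut(\bA)$, conjugation by $g\in G_\bA$ is an inner automorphism of $\Aut(\bA)$ and fixes every irreducible, so in fact $\sigma$ ranges freely over $\widehat{\Aut(\bA)}$). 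The main obstacle I anticipate is precisely this last bookkeeping — verifying that for a fixed $\bA$ the map $\sigma \mapsto \Ind_{G_\bA}^G(\sigma)$ is injective on $\widehat{\Aut(\bA)}$ — together with the earlier point that no "smaller" normal subgroup $V'$ than $G_{(\bA)}$ is needed; both rely essentially on homogeneity forcing $G_\bA \twoheadrightarrow \Aut(\bA)$ and on the uniqueness part of Lemma~\ref{l:weak-el}, so I would isolate those two facts and feed them into Proposition~\ref{p:ind-reps} mechanically.
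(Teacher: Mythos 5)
Your overall route is the paper's: Theorem~\ref{th:main-th} plus Lemma~\ref{l:weak-el} for the converse inclusion, homogeneity to identify $G_\bA/G_{(\bA)}$ with $\Aut(\bA)$, and Proposition~\ref{p:ind-reps}\eqref{i:pir:equiv} together with the uniqueness clause of Lemma~\ref{l:weak-el} for the no-repetitions statement. Your closing remark that for a fixed $\bA$ conjugation by $g \in G_\bA$ induces an inner automorphism of $\Aut(\bA)$, so that $\sigma$ ranges injectively over the irreducible representations of $\Aut(\bA)$, is correct and in fact spells out a point the paper leaves implicit.

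The one step that does not stand as written is the factorization of $\sigma$ through $\Aut(\bA)$. You abandon your first attempt midsentence and replace it by the assertion that ``in the homogeneous case the minimal open normal finite-index $V' \unlhd G_\bA$ with commensurator $G_\bA$ is $G_{(\bA)}$ itself''. Homogeneity alone does not give this: the group of Example~\eqref{i:ex:Cherlin} is the automorphism group of a homogeneous structure, fixes no points (so the relevant pointwise stabilizer is the whole group), and yet has proper open normal subgroups of finite index; the claim is really a consequence of weak elimination of imaginaries, and you never derive it. The repair is immediate and is exactly what the paper does: apply the \emph{full} statement of Lemma~\ref{l:weak-el} to the open subgroup $V$ produced by Theorem~\ref{th:main-th} (where $V \unlhd H = \Comm_G(V)$ and $\sigma$ is a representation of $H/V$). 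The lemma gives one and the same finite algebraically closed $\bA$ with $G_{(\bA)} \leq V \leq G_\bA$ \emph{and} $G_\bA = \Comm_G(V) = H$, so $H/V$ is a quotient of $G_\bA/G_{(\bA)} \cong \Aut(\bA)$ and $\sigma$ factors through $\Aut(\bA)$ with no further argument. (Alternatively, your first attempt can be completed: for the $\bB$ obtained by applying Lemma~\ref{l:weak-el} to $V'$ one has $G_\bB = \Comm_G(V') = \Comm_G(V) = G_\bA$ because $V'$ and $V$ are commensurate, hence $\bB = \bA$ by uniqueness and $G_{(\bA)} = G_{(\bB)} \leq V'$.) With that one-line fix your proposal coincides with the paper's proof.
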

\begin{proof}
Every representation in the above list is irreducible by Proposition~\ref{p:ind-reps}. Conversely, let $\pi$ be an irreducible representation of $G$. By Theorem~\ref{th:main-th}, there exist open subgroups $V \unlhd H \leq G$ such that $H = \Comm_G(V)$ and an irreducible representation $\sigma$ of $H/V$ such that $\pi = \Ind_H^G(\sigma)$. By Lemma~\ref{l:weak-el}, there exists a finite, algebraically closed substructure $\bA \sub \bX$ such that $G_{(\bA)} \leq V$ and $G_\bA = H$. As the quotient map $G_\bA \to G_\bA/V$ factors through $G_\bA/G_{(\bA)} \cong \Aut(\bA)$, we can consider $\sigma$ as a representation of $\Aut(\bA)$. Finally, that the list is without repetitions follows from Proposition~\ref{p:ind-reps} and homogeneity (the groups $G_\bA$ and $G_\bB$ are conjugate iff $\bA$ and $\bB$ are isomorphic).
\end{proof}

It is well known and not difficult to check that Examples~\eqref{i:ex:sinfty}--\eqref{i:ex:R} from the introduction admit weak elimination of imaginaries (see \cite{Hodges1993}*{Section~4.2} for a general method to verify this), so in particular all of their irreducible representations are obtained by induction from representations of automorphism groups of finite substructures. In the case of $S_\infty$, we obtain the theorem of Lieberman~\cite{Lieberman1972}. The finite groups that appear in the representations of $S_\infty$ and $\Homeo(2^\N)$ are the symmetric groups, while the ones associated to $\GL(\infty, \F_q)$ are $\GL(n, \F_q)$. As any finite group can be realized as the automorphism group of a finite graph, we see that the representations of $\Aut(\bR)$ encode the representations of all finite groups.
\begin{remark} \label{r:Olshanski}
In \cite{Olshanski1991a}, Olshanski gives a description of the representations of another completion of the inductive limit $\varinjlim \GL(n, \F_q)$ which is different from our $\GL(\infty, \F_q)$. It is possible to obtain a proof of his result by our methods as follows. Let $\bV$ be the vector space over $\F_q$ with basis $\set{e_1, e_2, \ldots}$ and let $e_1', e_2', \ldots$ be the elements in the dual defined by $\ip{e_i, e_j'} = \delta_{ij}$. Let $\bV'$ be the (proper) subspace of the dual generated by $e_1', e_2', \ldots$. Our structure then consists of the disjoint union of the vector spaces $\bV$ and $\bV'$ together with binary relations for the pairing $\bV \times \bV' \to \F_q$. One checks that it is homogeneous and that its automorphism group is isomorphic to the one considered in \cite{Olshanski1991a}. The finite substructures of this structure are pairs of the form $(\bA, \bA')$, where $\bA$ is a finite subspace of $\bV$ and $\bA'$ is a finite subspace of $\bV'$.
\end{remark}

\begin{remark*}
The structure in Example~\eqref{i:ex:Cherlin} does not admit weak elimination of imaginaries. Another standard example of an $\omega$-categorical structure that does not eliminate imaginaries is the group $\Gamma = (\Z/4\Z)^{<\N}$; the cosets of $\Gamma/2\Gamma$ are imaginaries that cannot be eliminated.
\end{remark*}

In $(\Q, <)$, finite substructures are rigid, so we have the following.
\begin{cor} \label{c:repsQ}
The irreducible representations of $\Aut(\Q)$ are
\[
\Aut(\Q) \actson \ell^2(\Q^{[n]}), \quad n \in \N.
\]
\end{cor}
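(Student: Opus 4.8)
The plan is to apply Corollary~\ref{c:wk-elim} to the structure $\bX = (\Q, <)$ and then unwind the induced representations it produces. So first I would check that $(\Q, <)$ satisfies the hypotheses of that corollary: it is $\omega$-categorical and homogeneous by Example~\eqref{i:ex:Q}, and it admits weak elimination of imaginaries. For the last point one can cite the general fact recorded above (that Examples~\eqref{i:ex:sinfty}--\eqref{i:ex:R} admit weak elimination of imaginaries), or verify it directly: every open subgroup of $G = \Aut(\Q)$ contains the pointwise stabilizer $G_{(\bA)}$ of a finite set $\bA$, and since an order-automorphism fixing a finite set setwise fixes it pointwise, $G_{(\bA)} = G_\bA$; moreover any $g$ with $g \cdot \bA \neq \bA$ moves some point of $g\cdot\bA$ into one of the finitely many intervals cut out by $\bA$, on which $G_{(\bA)}$ acts with an infinite orbit by density, so $G_\bA$ is of infinite index in $\langle G_\bA, g\rangle$ and hence $G_\bA = \Comm_G(G_\bA)$.

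Next I would determine the finite algebraically closed substructures of $(\Q, <)$ and their automorphism groups. The algebraic closure of a finite set $\bA \sub \Q$ is $\bA$ itself: for $d \notin \bA$, density and homogeneity imply that $G_{(\bA)}$ acts transitively on the interval determined by $\bA$ that contains $d$, so the orbit $G_{(\bA)} \cdot d$ is infinite. Thus \emph{every} finite subset of $\Q$ is algebraically closed, and each such $\bA$, being a finite linear order, is rigid, so $\Aut(\bA) = \{1\}$. Hence the only irreducible representation $\sigma$ of $\Aut(\bA)$ is the trivial one, and Corollary~\ref{c:wk-elim} says that the irreducible representations of $G$ are exactly $\Ind_{G_\bA}^G(1_{G_\bA}) \cong \ell^2(G/G_\bA)$, one for each isomorphism type of finite $\bA \sub \Q$, i.e.\ one for each cardinality $n = |\bA|$.

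Finally I would identify $\ell^2(G/G_\bA)$ with $\ell^2(\Q^{[n]})$. Since $\bA$ is rigid, $G_\bA = G_{(\bA)}$, so $G/G_\bA$ is $G$-equivariantly identified with the orbit $G \cdot \bA \sub \Q^{[n]}$; by homogeneity $G$ acts transitively on $n$-element subsets of $\Q$, so $G \cdot \bA = \Q^{[n]}$ and therefore $\ell^2(G/G_\bA) \cong \ell^2(\Q^{[n]})$ as $G$-representations. The ``without repetitions'' clause of Corollary~\ref{c:wk-elim} (equivalently, the fact that finite linear orders of distinct sizes are non-isomorphic) shows these representations are pairwise inequivalent for distinct $n$, completing the proof.

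There is no real obstacle here: all the substance is already in Corollary~\ref{c:wk-elim}, and the only things needing verification are weak elimination of imaginaries for $(\Q,<)$ and the triviality of its algebraic closure operator and of the automorphism groups of its finite substructures, all of which are immediate from density and homogeneity. If anything, the one point to state carefully is that rigidity of $\bA$ collapses the setwise and pointwise stabilizers, which is what makes $G/G_\bA$ coincide with $\Q^{[n]}$ rather than with the space of ordered $n$-tuples.
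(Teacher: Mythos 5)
Your proposal is correct and follows essentially the same route as the paper: the paper's proof is exactly the observation that $(\Q,<)$ admits weak elimination of imaginaries (stated earlier for Examples \eqref{i:ex:sinfty}--\eqref{i:ex:R}), so Corollary~\ref{c:wk-elim} applies, and finite substructures of $\Q$ are rigid, so only trivial $\sigma$ occurs and the induced representations are the quasi-regular ones $\ell^2(\Q^{[n]})$. Your extra verifications (no algebraicity, $G_{(\bA)}=G_\bA$, transitivity on $\Q^{[n]}$) just spell out what the paper leaves implicit, and your citation of the stated weak elimination of imaginaries is the same justification the paper uses.
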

As $\Aut(\Q)$ can be densely embedded in $\Homeo^+(\R)$ (the latter being equipped with any of the uniform convergence, pointwise convergence, or compact-open topologies which coincide on it), we obtain the result of Megrelishvili~\cite{Megrelishvili2001} mentioned in the introduction.
\begin{cor} \label{c:HomeoR}
The group $\Homeo^+(\R)$ has no non-trivial unitary representations.
\end{cor}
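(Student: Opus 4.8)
The plan is to derive this from Corollary~\ref{c:repsQ} together with the fact that $\Aut(\Q)$ embeds densely in $\Homeo^+(\R)$. First I would recall why $\Aut(\Q)$ sits densely inside $\Homeo^+(\R)$: identifying $\Q$ with the rationals in $\R$, every order-preserving permutation of $\Q$ extends uniquely to an orientation-preserving homeomorphism of $\R$, and conversely any orientation-preserving homeomorphism can be approximated (in the pointwise, uniform, or compact-open topology, which all coincide on $\Homeo^+(\R)$) by one that permutes a prescribed finite set of rationals, hence by an element in the image of $\Aut(\Q)$. So the image of $\Aut(\Q)$ is dense.

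Now let $\pi$ be a continuous unitary representation of $\Homeo^+(\R)$. Restricting $\pi$ along the dense embedding gives a continuous unitary representation $\pi|_{\Aut(\Q)}$ of $\Aut(\Q)$; moreover, a closed subspace is $\pi(\Homeo^+(\R))$-invariant iff it is $\pi(\Aut(\Q))$-invariant, by density and strong continuity. By Theorem~\ref{th:main-th} (applied to the Roelcke precompact group $\Aut(\Q)$, which is oligomorphic and hence Roelcke precompact by Theorem~\ref{th:char-olig}), $\pi|_{\Aut(\Q)}$ decomposes as a Hilbert sum of irreducible representations of $\Aut(\Q)$. By Corollary~\ref{c:repsQ}, each such irreducible summand is one of the quasi-regular representations $\Aut(\Q) \actson \ell^2(\Q^{[n]})$. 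So it suffices to show that none of these, except the trivial one ($n = 0$, or more precisely the trivial subrepresentation), can be realized as (a summand of) the restriction of a representation of $\Homeo^+(\R)$; equivalently, that the only $\Homeo^+(\R)$-invariant subspaces of $\bigoplus_n \ell^2(\Q^{[n]})$-type summands are trivial ones.

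The key point — and the step I expect to be the main obstacle to state cleanly — is that for $n \geq 1$ the representation $\ell^2(\Q^{[n]})$, viewed as a representation of the dense subgroup $\Aut(\Q) \leq \Homeo^+(\R)$, does not extend continuously to $\Homeo^+(\R)$ on any nonzero invariant subspace. Here is the mechanism: fix $n \geq 1$ and a nonzero vector $\xi$ in some would-be invariant subspace, with associated positive-definite function $\phi_\xi(g) = \langle \pi(g)\xi,\xi\rangle$ on $\Homeo^+(\R)$, which is continuous. Its restriction to $\Aut(\Q)$ is a positive-definite function supported, via Corollary~\ref{c:repsQ}, on the representations $\ell^2(\Q^{[n]})$ with $n \geq 1$; in particular $\phi_\xi$ is constant on double cosets of the stabilizer of any finite set of rationals (Lemma~\ref{l:const-dblcst}) and takes only finitely many values on such a coset space. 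Now I would exhibit, inside $\Homeo^+(\R)$, a sequence $g_k$ converging to the identity such that the $g_k$ lie in infinitely many distinct double cosets of the relevant open subgroup of $\Aut(\Q)$ but with $\phi_\xi(g_k) \to \phi_\xi(1) = \|\xi\|^2$ by continuity — forcing, by the weak-limit-point argument used in the proof of Theorem~\ref{th:main-th}, that $\xi$ is fixed by a larger subgroup than expected, and iterating, that $\xi$ is $\Homeo^+(\R)$-fixed, hence the subrepresentation is trivial. Concretely, one uses that $\Homeo^+(\R)$ acts transitively on $n$-tuples of reals but contains homeomorphisms arbitrarily close to the identity that move a given finite tuple of rationals to another configuration not in its $\Aut(\Q)$-orbit-closure structure; the continuity of $\pi$ over $\Homeo^+(\R)$ then collapses the finitely-many-values constraint. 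Since the only $\Aut(\Q)$-subrepresentation that is actually trivial is the one corresponding to $n = 0$ (the trivial representation), we conclude $\pi$ is trivial. The delicate part is organizing this collapse argument so that it applies uniformly to the full decomposition rather than one summand at a time; I would handle this by choosing $\xi_0$ minimizing the number of distinct values of $\phi_{\xi_0}$ over all $\Homeo^+(\R)$-invariant configurations, exactly as in Theorem~\ref{th:main-th}, and running the weak-limit-point argument with sequences coming from $\Homeo^+(\R)$ rather than just from $\Aut(\Q)$.
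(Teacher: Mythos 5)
Your high-level route is the paper's: restrict $\pi$ to the dense subgroup $H = \set{g \in \Homeo^+(\R) : g(\Q) = \Q}$, which is a continuous image of $\Aut(\Q)$, use Corollary~\ref{c:repsQ} to write $\pi|_H \cong \boplus_i \ell^2(\Q^{[n_i]})$, and then contradict continuity of $\pi$ on $\Homeo^+(\R)$ with elements close to the identity there. But the decisive step is left unexecuted, and the mechanism you sketch for it does not work as stated. You never exhibit the sequence $g_k$, and the apparatus you propose (positive definite functions constant on double cosets, the weak-limit-point argument of Theorem~\ref{th:main-th} re-run over $\Homeo^+(\R)$, an iteration showing $\xi$ is fixed by ever larger subgroups, a minimization over ``$\Homeo^+(\R)$-invariant configurations'') is both unnecessary and incorrectly set up: Lemma~\ref{l:const-dblcst} gives constancy of $\phi_\xi$ on double cosets of an open $V \leq \Aut(\Q)$ only when $\xi$ is $V$-fixed, whereas a general nonzero vector of $\boplus_i \ell^2(\Q^{[n_i]})$ (say one with infinite support) is fixed by no open subgroup of $\Aut(\Q)$, so the ``finitely many values'' claim fails for your arbitrary $\xi$; moreover the weak-limit-point argument rests on the double-coset (Roelcke precompact) structure of $\Aut(\Q)$ and does not transport to sequences drawn from $\Homeo^+(\R)$ without substantial extra work, none of which is supplied.

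The gap closes with one concrete choice, which is what the paper does: take $g_k(x) = x + 1/k$. These homeomorphisms preserve $\Q$, hence lie in $H$, and converge to the identity in $\Homeo^+(\R)$; yet no nonempty finite set $\bA \sub \Q$ is invariant under any $g_k$, so for the indicator vector $\delta_\bA \in \ell^2(\Q^{[n_i]})$ of a summand with $n_i \geq 1$ one gets $\nm{\pi(g_k)\delta_\bA - \delta_\bA} = \sqrt{2}$ for all $k$, contradicting strong continuity of $\pi$. Note that this also dissolves your worry about treating the decomposition ``uniformly'': the contradiction is obtained summand by summand, because $\delta_\bA$ is an honest vector of $\mcH(\pi)$ and continuity of $\pi$ applies to it directly; no $\Homeo^+(\R)$-invariance of the summand, and no positive-definite-function or weak-limit machinery, is needed. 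Hence every $n_i = 0$, so $\pi$ is trivial on $H$, and by density of $H$ in $\Homeo^+(\R)$ it is trivial on all of $\Homeo^+(\R)$.
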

\begin{proof}
Let $G = \Homeo^+(\R)$ and let $\pi$ be a unitary representation of $G$. Let
\[
H = \set{g \in G : g(\Q) = \Q}
\]
(here $\Q$ is regarded as a subset of $\R$) and note that $H$ is a continuous homomorphic image of $\Aut(\Q)$. By Corollary~\ref{c:repsQ}, $\pi|_H$ is a direct sum $\boplus_i \ell^2(\Q^{[n_i]})$. Define $g_k \in H$ by $g_k(x) = x + 1/k$. Then $g_k$ converges to the identity in $G$ but assuming that $n_i \neq 0$ for some $i$ and letting $\delta_\bA$ be the vector in $\ell^2(\Q^{[n_i]})$ which is $1$ on some subset $\bA \in \Q^{[n_i]}$ and $0$ everywhere else, we obtain $\nm{\pi(g_k)\delta_\bA - \delta_\bA} = \sqrt{2}$ for all $k$, a contradiction with the continuity of $\pi$. Hence, $n_i = 0$ for all $i$ and $\pi$ is trivial on $H$. As $H$ is dense in $G$, $\pi$ must be trivial on $G$, too.
\end{proof}
\begin{remark*}
In \cite{Megrelishvili2001}, Megrelishvili proves the stronger result that $\Homeo^+(\R)$ does not have any non-trivial representations by isometries on reflexive Banach spaces.
\end{remark*}

We finally remark that the continuity assumption in our definition of a representation is often not restrictive because of the phenomenon of automatic continuity. Say that a Polish group $G$ satisfies the \df{automatic continuity property} if every homomorphism $f \colon G \to H$ to a separable group $H$ is continuous. This quite remarkable property has been verified for many non-locally compact groups, including Examples~\eqref{i:ex:sinfty}--\eqref{i:ex:R} from the introduction (for $S_\infty$, $\GL(\infty, \F_q)$, and $\Aut(\bR)$ it is due to Kechris--Rosendal~\cite{Kechris2007a} and Hodges--Hodkinson--Lascar--Shelah~\cite{Hodges1993a}; for $\Aut(\Q)$ and $\Homeo(2^\N)$, it is a result of Rosendal--Solecki~\cite{Rosendal2007}).
\begin{cor}
Let $G$ be an oligomorphic group that satisfies the automatic continuity property. Then the conclusion of Theorem~\ref{th:main-th} applies to any (not a priori continuous) unitary representation of $G$ on a \emph{separable} Hilbert space.
\end{cor}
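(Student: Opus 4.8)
The plan is to observe that, under the automatic continuity hypothesis, the word ``continuous'' in the definition of a unitary representation becomes redundant, so that Theorem~\ref{th:main-th} applies without change. Concretely, let $\mcH$ be a separable Hilbert space and $\pi \colon G \to U(\mcH)$ an abstract group homomorphism, and equip the unitary group $U(\mcH)$ with the \emph{strong operator topology} (SOT), a neighbourhood basis at $1$ being the sets $\set{u \in U(\mcH) : \nm{u\xi_i - \xi_i} < \eps \text{ for } i = 1, \dots, n}$ with $\xi_1, \dots, \xi_n \in \mcH$ and $\eps > 0$. With this topology, ``$\pi$ is continuous'' is exactly the requirement that $g \mapsto \pi(g)\xi$ be continuous for every $\xi$, i.e.\ that $\pi$ be a continuous unitary representation in the sense of the paper.

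First I would verify that $(U(\mcH), \mathrm{SOT})$ is a separable topological group. For separability, fix a countable dense set $D \sub \mcH$; then $u \mapsto (u\xi)_{\xi \in D}$ is a homeomorphism of $(U(\mcH), \mathrm{SOT})$ onto a subspace of the separable metrizable space $\mcH^D$, the point being that the strong topology is already recovered from evaluation on $D$ alone by an $\eps/3$ argument using $\nm{u} = 1$ for all $u \in U(\mcH)$. Continuity of multiplication and of inversion $u \mapsto u^{-1} = u^*$ follow from the estimates $\nm{uv\xi - u_0 v_0 \xi} \leq \nm{v\xi - v_0\xi} + \nm{u(v_0\xi) - u_0(v_0\xi)}$ and, writing $\eta = u_0^{-1}\xi$, $\nm{u^{-1}\xi - u_0^{-1}\xi} = \nm{\xi - u u_0^{-1}\xi} = \nm{u_0\eta - u\eta}$, again using $\nm{u} = 1$. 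Since $G$ has the automatic continuity property and $(U(\mcH), \mathrm{SOT})$ is a separable group, the homomorphism $\pi$ is continuous; by the previous paragraph, $\pi$ is then a continuous unitary representation.

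It remains to apply Theorem~\ref{th:main-th}. As $G$ is oligomorphic, it is in particular a Roelcke precompact subgroup of $S_\infty$: by Definition~\ref{df:olig} it is realised as a permutation group of a countable set, which we may take infinite (if it is finite then $G$ is finite and the statement is immediate), and oligomorphic groups are Roelcke precompact by Theorem~\ref{th:char-olig}. Hence Theorem~\ref{th:main-th} applies to $\pi$ and yields its decomposition into irreducibles of the form $\Ind_H^G(\sigma)$ with $H$ a commensurator and $\sigma$ factoring through a finite quotient of $H$. There is no substantial obstacle; the only points needing (routine) care are checking that $(U(\mcH), \mathrm{SOT})$ is a separable topological group — in particular that inversion is strongly continuous on unitaries — and identifying SOT-continuity of $\pi$ with strong continuity of the representation.
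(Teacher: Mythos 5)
Your argument is correct and is exactly the intended one: the paper states this corollary without proof, the implicit argument being that $U(\mcH)$ with the strong operator topology is a separable topological group when $\mcH$ is separable, so automatic continuity makes $\pi$ a continuous representation and Theorem~\ref{th:main-th} applies. Your verification of the routine points (separability of $(U(\mcH),\mathrm{SOT})$, continuity of inversion, oligomorphic $\Rightarrow$ Roelcke precompact subgroup of $S_\infty$) is accurate.
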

Of course, the condition that the Hilbert space be separable is necessary: every oligomorphic group considered as discrete has its left-regular representation which is not of the kind described in the theorem.

\section{Property (T)} \label{s:propT}
We recall the definition of property (T) for topological groups.
\begin{defn} \label{df:propT}
Let $G$ be a group, $Q \sub G$, $\eps > 0$. If $\pi$ is a unitary representation of $G$, we say that a non-zero vector $\xi \in \mcH(\pi)$ is \df{$(Q, \eps)$-invariant for $\pi$} if for all $x \in Q$, $\nm{\pi(x)\xi - \xi} \leq \eps \nm{\xi}$. The topological group $G$ is said to have \df{Kazhdan's property (T)} if there exist a compact $Q \sub G$ and $\eps > 0$ such that every representation $\pi$ of $G$ that has a $(Q, \eps)$-invariant vector, actually has an invariant vector. $G$ has the \df{strong property (T)} if $Q$ can be chosen to be finite. The set $Q$ is called a \df{Kazhdan set} for $G$.
\end{defn}
This property has mostly been studied for locally compact groups, where it has found many applications, but by now, there are also some non-locally compact examples. Of course, groups that have no unitary representations trivially have property (T) but there are also some large groups that have property (T) for non-trivial reasons. The first examples of this type were the loop groups over $\SL(n, \C)$ (Shalom~\cite{Shalom1999}), another is the infinite-dimensional unitary group (Bekka~\cite{Bekka2003}).

We note that property (FH), which is equivalent to (T) for locally compact Polish groups (see \cite{Bekka2008}), is strictly weaker in general. While all Roelcke precompact groups have property (FH) (Proposition~\ref{p:OB}), additional work is needed to find appropriate Kazhdan sets.

As every representation of a Roelcke precompact subgroup of $S_\infty$ splits as a sum of irreducibles, to verify that such a group has property (T), it suffices to find $(Q, \eps)$ such that $\sup_{g \in Q} \nm{\pi(g)\xi - \xi} \geq \eps$ for every non-trivial, irreducible $\pi$ and unit vector $\xi \in \mcH(\pi)$.

We have seen in the previous section that when an $\omega$-categorical structure $\bX$ admits weak elimination of imaginaries, all irreducible representations of $G = \Aut(\bX)$ can be extracted from the action $G \actson \bX$. As we are concerned with non-trivial representations, it will be convenient to disregard the points in $\bX$ that are fixed by $G$. Denote $\bX_0 = \bX \sminus \set{a \in \bX : G \cdot a = a}$. The next proposition shows that to verify property (T), it suffices to consider only tensor powers of the representation $\ell^2(\bX_0)$.
\begin{prop} \label{p:subreps}
Let $\bX$ be an $\omega$-categorical structure that admits weak elimination of imaginaries and $G = \Aut(\bX)$. Then every non-trivial, irreducible representation of $G$ is a subrepresentation of $\ell^2(\bX_0^n)$ for some $n > 0$.
\end{prop}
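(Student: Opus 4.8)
The plan is to read off the shape of a non-trivial irreducible representation $\pi$ of $G=\Aut(\bX)$ from the classification in Theorem~\ref{th:main-th}, then to use weak elimination of imaginaries to realize the open subgroup we induce from as the pointwise stabilizer of a finite substructure, and finally to discard the points of $\bX$ fixed by all of $G$. First I would apply Theorem~\ref{th:main-th} (note that $G$ is oligomorphic, hence Roelcke precompact, and a subgroup of $S_\infty$): we may write $\pi\cong\Ind_H^G(\sigma)$, where $H\leq G$ is a commensurator and $\sigma$ is an irreducible representation of $H$ that is trivial on some open normal finite-index subgroup $V\unlhd H$. As $V$ has finite index in $H$, the two are commensurate, so $\Comm_G(V)=\Comm_G(H)=H$, and Lemma~\ref{l:weak-el} (which uses only that $\bX$ is $\omega$-categorical and admits weak elimination of imaginaries) provides a finite, algebraically closed substructure $\bA\subseteq\bX$ with $G_{(\bA)}\leq V\leq G_\bA$ and $G_\bA=\Comm_G(V)=H$. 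Put $W=G_{(\bA)}$: it is open, it is normal in $H$ (being the kernel of the restriction map $H=G_\bA\to\Aut(\bA)$), and $[H:W]<\infty$ since $\Aut(\bA)$ is finite. Because $W\leq V$, the representation $\sigma$ is trivial on $W$, so it may be regarded as an irreducible representation of the finite group $H/W$.

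Next I would embed $\pi$ into a quasi-regular representation and then into a tensor power of $\ell^2(\bX_0)$. Since every irreducible representation of a finite group occurs in its regular representation, $\sigma$ is contained in the inflation to $H$ of the regular representation $\lambda_{H/W}$ of $H/W$; hence $\pi\cong\Ind_H^G(\sigma)$ is contained in $\Ind_H^G(\lambda_{H/W})$, which is isomorphic to $\ell^2(G/W)$ by \eqref{eq:dbl-ind} (with $W$ in the role of $V$ there). Now observe that every element of $\bA$ lying outside $\bX_0$ is fixed by the whole of $G$, so deleting such elements from $\bA$ does not change its pointwise stabilizer: enumerating $\bA\cap\bX_0$ as a tuple $\bar b=(b_1,\dots,b_n)\in\bX_0^n$ gives $W=G_{(\bA)}=G_{(\bA\cap\bX_0)}=G_{\bar b}$. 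As $\bX_0$, and hence $\bX_0^n$, is $G$-invariant, the orbit $G\cdot\bar b$ is a $G$-invariant subset of $\bX_0^n$, and by the orbit--stabilizer bijection $\ell^2(G/W)=\ell^2(G/G_{\bar b})$ is $G$-equivariantly isomorphic to $\ell^2(G\cdot\bar b)$, a subrepresentation of $\ell^2(\bX_0^n)$. Thus $\pi$ is a subrepresentation of $\ell^2(\bX_0^n)$. Finally $n>0$: were $\bA\cap\bX_0$ empty we would have $W=G$, forcing $V=H=\Comm_G(G)=G$, so that $\sigma$ would be a representation of the trivial group $H/V$ and $\pi=\Ind_G^G(\sigma)$ would be trivial, contradicting our assumption.

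I do not foresee a genuine difficulty: the substance is carried entirely by Theorem~\ref{th:main-th} and Lemma~\ref{l:weak-el}, and what remains — recognizing $\ell^2(G/W)$ inside $\ell^2(\bX_0^n)$ and stripping the $G$-fixed points out of $\bA$ — is routine bookkeeping. The one step that genuinely requires care is the passage from $V$ to $W=G_{(\bA)}$, for which Lemma~\ref{l:weak-el}, and hence the weak elimination hypothesis, cannot be dispensed with: without it an irreducible representation of $\Aut(\bX)$ may fail to occur in any $\ell^2(\bX^n)$, as already happens for the non-trivial characters of the group in Example~\eqref{i:ex:Cherlin}.
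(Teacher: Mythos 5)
Your argument is correct and follows the paper's route in all essentials: Theorem~\ref{th:main-th} plus Lemma~\ref{l:weak-el} to realize $\pi$ inside $\Ind_{G_\bA}^G(\lambda_{G_\bA/G_{(\bA)}})\cong\ell^2(G/G_{(\bA)})$, and then an identification of this quasi-regular representation inside a power of $\ell^2(\bX_0)$. The only (harmless) divergence is at the last step: you strip the $G$-fixed points out of $\bA$ first, noting $G_{(\bA)}=G_{(\bA\cap\bX_0)}$, and land directly in $\ell^2(\bX_0^n)$, whereas the paper embeds $\ell^2(G/G_{(\bA)})$ into $\ell^2(\bX^{|\bA|})$ and then decomposes $\ell^2(\bX^k)$ into summands $\ell^2(\bX_0^{n_i})$ by a tensor-product induction; your shortcut is a slightly more direct piece of bookkeeping, and your $n>0$ argument matches the paper's use of non-triviality.
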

\begin{proof}
Let $\pi$ be an irreducible representation of $G$. In the same way as in the proof of Corollary~\ref{c:wk-elim}, we see that there exists an algebraically closed, finite substructure $\bA \sub \bX$ such that $\pi$ is equivalent to $\Ind_{G_\bA}^G(\sigma)$ for some irreducible representation $\sigma$ of the finite group $K = G_\bA/G_{(\bA)}$. As every irreducible representation of a finite group is contained in its left-regular representation, in the same way as in \eqref{eq:dbl-ind}, we obtain:
\[
\pi \cong \Ind_{G_\bA}^G(\sigma) \leq \Ind_{G_\bA}^G(\lambda_K) \cong \ell^2(G/G_{(\bA)}).
\]
If $k = |\bA|$, then $G/G_{(\bA)}$ is an orbit of the action $G \actson \bX^k$, so $\ell^2(G/G_{(\bA)}) \sub \ell^2(\bX^k)$. To finish the proof, we check by induction on $k$ that $\ell^2(\bX^k)$ splits as a direct sum $\boplus_{i = 1}^s \ell^2(\bX_0^{n_i})$. If $k = 0$, we can take $s = 1$ and $n_1 = 0$. Denote $Y = \bX \sminus \bX_0$ and observe that as the action $G \actson \bX$ has only finitely many orbits, $Y$ is finite. Now we deduce the statement for $k+1$ from the one for $k$:
\[ \begin{split}
\ell^2(\bX^{k+1}) \cong \ell^2(\bX) \otimes \ell^2(\bX^k)
    &\cong \big(\ell^2(\bX_0) \oplus \ell^2(Y)\big) \otimes \big(\bigoplus_{i=1}^s \ell^2(\bX_0^{n_i})\big) \\
    &\cong \bigoplus_{i=1}^s \ell^2(\bX_0^{n_i+1}) \oplus |Y| \cdot \bigoplus_{i=1}^s \ell^2(\bX_0^{n_i}).
\end{split} \]
Finally, as $\pi$ is irreducible and non-trivial, it is a subrepresentation of one of the $\ell^2(\bX_0^{n_i})$ for some $n_i > 0$.
\end{proof}

If $Y$ is a set, denote by $\ell^1_+(Y)$ the subset of $\ell^1(Y)$ consisting of all non-negative $\ell^1$ functions of norm $1$.
\begin{lemma} \label{l:nonam-prod}
Suppose that a group $G$ acts on a set $Y$ so that there exists a subset $Q \sub G$ and $\eps > 0$ such that for every $f \in \ell^1_+(Y)$,
\begin{equation} \label{eq:def-nonamen}
\sup_{g \in Q} \nm{g \cdot f - f}_1 \geq \eps.
\end{equation}
Then \eqref{eq:def-nonamen} holds also for every $f \in \ell^1_+(Y^n)$ (with the diagonal action $G \actson Y^n$) for every $n \geq 2$.
\end{lemma}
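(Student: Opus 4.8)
The plan is to reduce the case of $Y^n$ to the case of $Y$ by an averaging (marginal) argument. Given $f \in \ell^1_+(Y^n)$, the key idea is to consider the family of conditional slices of $f$: for each $\bar y = (y_2, \dots, y_n) \in Y^{n-1}$, let $f_{\bar y} \in \ell^1_+(Y)$ be the normalised restriction of $f$ to the fiber $Y \times \{\bar y\}$ (defined whenever that fiber has positive mass), and let $m(\bar y)$ be the total mass of $f$ on that fiber, so that $m \in \ell^1_+(Y^{n-1})$. The hypothesis applied to each $f_{\bar y}$ gives, for each $\bar y$ with $m(\bar y) > 0$, some $g \in Q$ with $\nm{g \cdot f_{\bar y} - f_{\bar y}}_1 \geq \eps$. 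First I would try to combine these fiberwise estimates: since $Q$ is (a priori) a fixed finite set and the displayed supremum is over $g \in Q$, for a suitable $g \in Q$ the ``bad'' set of $\bar y$ must carry a definite fraction of the mass $m$, and on that set $\nm{g \cdot f - f}_1$ inherits a lower bound proportional to $\eps$ times that fraction of the mass — but this only works cleanly if one can pick a single $g$ simultaneously for many fibers.

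The cleaner route, which I would follow instead, is to induct on $n$ and to use the \emph{marginal} rather than the fibers. Write $\bar Y = Y^{n-1}$ and let $\mu \in \ell^1_+(\bar Y)$ be the second marginal of $f$ and $\nu \in \ell^1_+(Y)$ the first marginal. One checks the elementary inequalities $\nm{g \cdot \mu - \mu}_1 \leq \nm{g \cdot f - f}_1$ and $\nm{g \cdot \nu - \nu}_1 \leq \nm{g \cdot f - f}_1$ for every $g$, since taking a marginal is a norm-one positive linear map commuting with the $G$-action. Now if $\sup_{g \in Q}\nm{g \cdot f - f}_1 < \eps$, then in particular $\sup_{g \in Q}\nm{g \cdot \nu - \nu}_1 < \eps$, contradicting the hypothesis for $Y$ directly. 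This already proves the lemma with the \emph{same} $Q$ and $\eps$, for all $n \geq 2$, without any induction at all — the marginal inequality does all the work.

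Let me record the one computation that needs care: the inequality $\nm{g \cdot \nu - \nu}_1 \leq \nm{g \cdot f - f}_1$, where $\nu(y) = \sum_{y_2,\dots,y_n} f(y, y_2, \dots, y_n)$ and $g$ acts diagonally. Since $g$ permutes $Y^n$ and the projection $Y^n \to Y$ onto the first coordinate intertwines the two actions, we have $g \cdot \nu = \pi_*(g \cdot f)$ where $\pi_*$ denotes pushforward under the projection $\pi$; and $\pi_*$ is linear with $\nm{\pi_* h}_1 \leq \nm{h}_1$ for all $h \in \ell^1(Y^n)$ (indeed with equality when $h \geq 0$). Hence $\nm{g \cdot \nu - \nu}_1 = \nm{\pi_*(g \cdot f - f)}_1 \leq \nm{g \cdot f - f}_1$, as claimed.

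The main obstacle — really the only thing to get right — is precisely to identify that passing to a marginal is the correct operation (rather than conditioning on fibers, which leads to the messier mass-splitting argument sketched first), and to verify that projection onto a coordinate is both $G$-equivariant and $\ell^1$-norm-nonincreasing. Everything else is immediate. I would therefore present the proof in the short form: define the first-coordinate marginal $\nu$ of $f$, note equivariance and contractivity of the pushforward, deduce $\sup_{g\in Q}\nm{g\cdot\nu-\nu}_1 \leq \sup_{g\in Q}\nm{g\cdot f - f}_1$, and invoke the hypothesis.
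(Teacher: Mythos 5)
Your proof is correct and is essentially the same as the paper's: the paper also passes to a marginal of $f$ (summing out the last coordinate, with the same re-indexing and triangle-inequality computation showing the marginal map is $G$-equivariant and $\ell^1$-contractive), merely organized as an induction peeling off one coordinate at a time instead of projecting onto the first coordinate in a single step. The fiberwise sketch you abandon in your first paragraph plays no role, so no issue there.
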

\begin{proof}
The proof is by induction on $n$. Let $f \in \ell^1_+(Y^{n+1})$. Define $\tilde f \in \ell^1_+(Y^n)$ by $\tilde f(\bar x) = \sum_{y \in Y} f(\bar x, y)$. Then for any $g \in G$,
\[ \begin{split}
\nm{g \cdot \tilde f - \tilde f}_1 &= \sum_{\bar x} |\tilde f(g^{-1} \cdot \bar x) - \tilde f(\bar x)| \\
&= \sum_{\bar x} \Big|\sum_y f(g^{-1} \cdot \bar x, y) - \sum_y f(\bar x, y)\Big| \\
&= \sum_{\bar x} \Big|\sum_y f(g^{-1} \cdot \bar x, g^{-1} \cdot y) - \sum_y f(\bar x, y)\Big| \\
&\leq \sum_{\bar x} \sum_y |f(g^{-1} \cdot \bar x, g^{-1} \cdot y) - f(\bar x, y)| = \nm{g \cdot f - f}_1,
\end{split} \]
showing that \eqref{eq:def-nonamen} holds for $f$ if it holds for $\tilde f$.
\end{proof}

\begin{prop} \label{p:KazhSet}
Suppose that $\bX$ is an $\omega$-categorical structure that admits weak elimination of imaginaries and $G = \Aut(\bX)$. Then if $Q \sub G$ is compact and
\begin{equation} \label{eq:equivT}
\inf_{f \in \ell^1_+(\bX_0)} \sup_{g \in Q} \nm{g \cdot f - f}_1 > 0,
\end{equation}
$Q$ is a Kazhdan set for $G$. Conversely, if the action $G \actson \bX_0$ has only infinite orbits and $Q$ is a Kazhdan set for $G$, then \eqref{eq:equivT} holds.
\end{prop}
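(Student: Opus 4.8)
The plan is to reduce everything to the quasi-regular representations $\ell^2(\bX_0^n)$ using Proposition~\ref{p:subreps}, and to pass between the Hilbert-space condition (existence of almost-invariant vectors) and the $\ell^1$ condition \eqref{eq:equivT} via the standard ``square root'' trick relating $\ell^2$-vectors and probability measures. For the first (main) implication, suppose $Q$ is compact and \eqref{eq:equivT} holds, say with infimum equal to some $\delta > 0$. Given a representation $\pi$ without invariant vectors, by Theorem~\ref{th:main-th} it is a sum of irreducibles, each non-trivial, and by Proposition~\ref{p:subreps} each of these embeds in some $\ell^2(\bX_0^{n})$ with $n > 0$; hence it suffices to bound below $\sup_{g\in Q}\nm{\pi(g)\xi-\xi}$ for unit vectors $\xi$ in $\ell^2(\bX_0^n)$, uniformly in $n$. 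First I would prove the key estimate for $n = 1$: if $\xi \in \ell^2(\bX_0)$ is a unit vector, set $f = |\xi|^2 \in \ell^1_+(\bX_0)$; then the elementary inequality $\nm{g\cdot f - f}_1 \le 2\,\nm{g\cdot\xi - \xi}_2$ (which comes from $\big||\a|^2-|\b|^2\big| \le |\a-\b|\,(|\a|+|\b|)$ together with Cauchy--Schwarz) gives $\sup_{g\in Q}\nm{\pi(g)\xi-\xi}_2 \ge \tfrac12\sup_{g\in Q}\nm{g\cdot f - f}_1 \ge \delta/2$. Then I would invoke Lemma~\ref{l:nonam-prod}, which propagates \eqref{eq:equivT} from $\bX_0$ to all diagonal powers $\bX_0^n$, to conclude that the same lower bound $\delta/2$ holds for unit vectors in $\ell^2(\bX_0^n)$ for every $n \ge 1$. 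Taking $\eps = \delta/2$, this says exactly that $(Q,\eps/2)$-invariant vectors cannot exist in any non-trivial irreducible representation, hence not in any representation without invariant vectors; so $Q$ is a Kazhdan set.

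For the converse, assume $G \actson \bX_0$ has only infinite orbits and that $Q$ is a Kazhdan set, so there is $\eps > 0$ with the property that any representation with a $(Q,\eps)$-invariant vector has an invariant vector. Consider $\pi = \ell^2(\bX_0)$, the quasi-regular representation. Since every orbit of $G$ on $\bX_0$ is infinite, $\pi$ has no non-zero invariant vectors (an invariant $\ell^2$ function is constant on orbits, hence must be $0$ on each infinite orbit). By the Kazhdan property, $\pi$ has no $(Q,\eps)$-invariant vector either, i.e. $\inf_{\nm{\xi}_2 = 1}\sup_{g\in Q}\nm{\pi(g)\xi-\xi}_2 \ge \eps$. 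Now given $f \in \ell^1_+(\bX_0)$, put $\xi = f^{1/2} \in \ell^2(\bX_0)$, a unit vector; the reverse comparison $\nm{g\cdot\xi - \xi}_2^2 = \sum_x \big(\sqrt{f(g^{-1}x)} - \sqrt{f(x)}\big)^2 \le \sum_x \big|f(g^{-1}x) - f(x)\big| = \nm{g\cdot f - f}_1$ (using $(\sqrt{s}-\sqrt{t})^2 \le |s-t|$ for $s,t \ge 0$) yields $\sup_{g\in Q}\nm{g\cdot f - f}_1 \ge \sup_{g\in Q}\nm{g\cdot\xi-\xi}_2^2 \ge \eps^2$, so \eqref{eq:equivT} holds with infimum at least $\eps^2$.

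The main obstacle, and the only place where real care is needed, is the uniformity over $n$ in the first implication: a priori one only controls almost-invariant vectors inside each fixed $\ell^2(\bX_0^n)$, and one needs a single pair $(Q,\eps)$ that works simultaneously for all $n$ and hence for arbitrary sums of subrepresentations of the $\ell^2(\bX_0^n)$. This is exactly what Lemma~\ref{l:nonam-prod} is designed to supply, since it shows the $\ell^1$-constant $\delta$ in \eqref{eq:equivT} is inherited by every diagonal power with no loss; combined with the $n$-independent constant $2$ in the $\ell^1$--$\ell^2$ comparison, this gives the required uniform $\eps = \delta/2$. One should also note that Proposition~\ref{p:subreps} only gives embeddings into $\ell^2(\bX_0^n)$ with $n > 0$ (the $n = 0$ summand corresponds to the trivial representation, which is excluded since $\pi$ has no invariant vectors), so the lower bound is never applied vacuously.
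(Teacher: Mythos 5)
Your proposal is correct and follows essentially the same route as the paper's proof: the reduction to the representations $\ell^2(\bX_0^n)$ via Proposition~\ref{p:subreps} together with Lemma~\ref{l:nonam-prod}, the comparison $\nm{g \cdot |\xi|^2 - |\xi|^2}_1 \leq 2\nm{g \cdot \xi - \xi}_2$ for the forward direction, and the $f^{1/2}$ square-root trick applied to $\ell^2(\bX_0)$ for the converse. The only cosmetic differences are that you state the converse directly rather than by contradiction and exclude invariant vectors in $\ell^2(\bX_0)$ by constancy on infinite orbits instead of the paper's finite level-set argument.
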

\begin{proof}
Suppose first that \eqref{eq:equivT} holds and set $\eps = \inf_{f \in \ell^1_+(\bX_0)} \sup_{g \in Q} \nm{g \cdot f - f}_1$. By Proposition~\ref{p:subreps}, to see that $Q$ is a Kazhdan set for $G$, it suffices to show that for every $n > 0$ and every $f \in \ell^2(\bX_0^n)$, $\sup_{g \in Q} \nm{g \cdot f - f}_2 \geq (\eps/2)\nm{f}_2$. Suppose that $f \in \ell^2(\bX_0^n)$, $\nm{f}_2 = 1$, $g \in G$. Set $\tilde f(x) = |f(x)|^2$. Then $\tilde f \in \ell^1_+(\bX_0^n)$ and we have
\[ \begin{split}
\nm{g \cdot \tilde f - \tilde f}_1 &= \sum_{\bar x \in \bX_0^n} \big| |f(g^{-1} \cdot \bar x)|^2 - |f(\bar x)|^2 \big| \\
& = \sum_{\bar x \in \bX_0^n} \big||f(g^{-1} \cdot \bar x)| - |f(\bar x)|\big| \cdot (|f(g^{-1} \cdot \bar x)| + |f(\bar x)|) \\
&= \ip{\big| g \cdot |f| - |f|\big|, g \cdot |f| + |f|} \\
&\leq \nm{g \cdot f - f}_2 \nm{g \cdot |f| + |f|}_2 \leq 2\nm{g \cdot f - f}_2.
\end{split} \]
Combining this with Lemma~\ref{l:nonam-prod} finishes the proof.

Now suppose that \eqref{eq:equivT} does not hold but there exists $\eps > 0$ such that $(Q, \eps)$ is a Kazhdan pair for $G$. Then there exists $f \in \ell^1_+(\bX_0)$ such that $\sup_{g \in Q} \nm{g \cdot f - f}_1 < \eps^2$. Using the inequality $|a-b|^2 \leq |a^2 - b^2|$, which holds for all non-negative real numbers $a$ and $b$, we see that $f^{1/2}$ is a $(Q, \eps)$-invariant vector for the representation $\ell^2(\bX_0)$. By property (T), $\ell^2(\bX_0)$ has an invariant vector $f_0$ and then $\set{a \in \bX_0 : |f_0(a)| = \max |f_0|}$ is a finite $G$-invariant set in $\bX_0$, which is a contradiction with the hypothesis that the action $G \actson \bX_0$ has infinite orbits.
\end{proof}

The next lemma shows that at least for certain well-behaved structures, we can always construct a compact set $Q \sub G$ that satisfies \eqref{eq:equivT}. Recall that an $\omega$-categorical structure has \df{no algebraicity} if the algebraic closure of every finite substructure $\bA$ is $\bA$ itself. This is equivalent to the condition that the stabilizer $G_{(\bA)}$ has infinite orbits on $\bX \sminus \bA$.
\begin{lemma} \label{l:consrQ}
Let $\bX$ be a homogeneous, relational structure with no algebraicity and let $G = \Aut(\bX)$. Then there exists a compact set $Q \sub G$ such that for every $f \in \ell^1_+(\bX)$,
\[
\sup_{g \in Q} \nm{g \cdot f - f}_1 \geq 1/2.
\]
\end{lemma}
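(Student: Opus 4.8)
The plan is to produce a \emph{finite} set $Q$, in fact a Kazhdan set of the strongest possible kind. I will construct two automorphisms $g_1, g_2 \in G = \Aut(\bX)$ that generate a free group $F_2$ \emph{acting freely} on $\bX$ (no non-trivial reduced word in $g_1, g_2$ fixes a point) and take $Q = \set{g_1, g_2}$. The reason this works is that the Schreier graph of such an action with respect to the generators $g_1, g_2$ is a disjoint union of $4$-regular trees --- each orbit is a copy of the regular $F_2$-set --- and the $4$-regular tree is non-amenable with an explicit isoperimetric constant. Unwinding this yields $\sup_{g \in Q}\nm{g \cdot f - f}_1 \geq 1$ for all $f \in \ell^1_+(\bX)$, which is stronger than the asserted $1/2$.

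\emph{Construction of $g_1, g_2$.} Enumerate $\bX = \set{x_0, x_1, \dots}$ and build $g_1, g_2$ as unions of increasing sequences of finite partial isomorphisms $p_j^{(0)} \sub p_j^{(1)} \sub \cdots$ ($j = 1, 2$). At each stage, under a fair bookkeeping, one addresses either a \emph{totality} requirement (add the least untreated element of $\bX$ to the domain, or the range, of $g_1$ or $g_2$) or a \emph{freeness} requirement for a pair $(w, a)$ with $w$ a non-trivial reduced word and $a \in \bX$. The invariant to keep is that the partial Schreier graph determined by $p_1^{(s)}, p_2^{(s)}$ (vertex set $\bX$, an edge $b \to b'$ with label $j$ whenever $g_j(b) = b'$ has been committed) is a forest. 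To extend a partial isomorphism by one pair one uses homogeneity of $\bX$ (so that the relevant finite partial map does extend) together with \emph{no algebraicity}: the valid targets form an infinite orbit of the pointwise stabiliser of a finite set, so one can always choose a target lying in a connected component of the current partial Schreier graph different from the one being extended, preserving the forest. Treating $(w, a)$ then just means following the trajectory $a, s_1 a, s_2 s_1 a, \dots$ (where $w = s_n \cdots s_1$), extending as needed; the forest invariant forbids a trajectory from ever returning to its starting point, so $w(g_1, g_2) \cdot a \neq a$ is automatic. In the limit $g_1, g_2$ are total automorphisms whose Schreier graph is a $4$-regular forest, so $F_2 = \langle g_1, g_2 \rangle$ acts freely on $\bX$, and in particular every orbit is equivariantly isomorphic to $F_2$ with the left-translation action.

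\emph{The estimate.} Let $f \in \ell^1_+(\bX)$, so $\nm{f}_1 = 1$. Since $g_1, g_2$ permute the $F_2$-orbits, $\nm{g_i \cdot f - f}_1 = \sum_O \nm{g_i \cdot (f|_O) - f|_O}_1$, the sum over $F_2$-orbits $O$. Choosing a base point in $O$ identifies $O$ with $F_2$ equivariantly, so $\nm{g_i \cdot (f|_O) - f|_O}_1 = \nm{\lambda(g_i)\tilde f - \tilde f}_1$ for the corresponding $\tilde f \in \ell^1_+(F_2)$ with $\nm{\tilde f}_1 = \nm{f|_O}_1$. Using the layer-cake identity $\nm{h - k}_1 = \int_0^\infty |\set{h > t} \triangle \set{k > t}|\, dt$ for non-negative $h, k$, and $\set{\lambda(g)h > t} = g \cdot \set{h > t}$, one gets $\nm{\lambda(g_1)\tilde f - \tilde f}_1 + \nm{\lambda(g_2)\tilde f - \tilde f}_1 = \int_0^\infty \big(|g_1 A_t \triangle A_t| + |g_2 A_t \triangle A_t|\big)\, dt$ with $A_t = \set{w \in F_2 : \tilde f(w) > t}$ finite. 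For any finite $A \sub F_2$, the sum $|g_1 A \triangle A| + |g_2 A \triangle A|$ is exactly the number of edges of the Cayley graph of $F_2$ (the $4$-regular tree) leaving $A$, which is at least $2|A|$ because the subgraph induced on $A$ is a forest. Integrating gives $\sum_i \nm{\lambda(g_i)\tilde f - \tilde f}_1 \geq 2 \nm{\tilde f}_1$; summing over orbits, $\nm{g_1 \cdot f - f}_1 + \nm{g_2 \cdot f - f}_1 \geq 2$, so $\sup_{g \in Q}\nm{g \cdot f - f}_1 = \max_i \nm{g_i \cdot f - f}_1 \geq 1 \geq 1/2$.

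\emph{Main obstacle.} The substantive step is the construction of $g_1, g_2$: the back-and-forth must satisfy all freeness requirements while never committing to a configuration forcing $w(g_1, g_2) \cdot a = a$ for some non-trivial $w$. The forest invariant is precisely what makes this self-maintaining, and the hypothesis of no algebraicity enters essentially here, guaranteeing that at each step there are infinitely many valid one-point extensions so that a ``fresh'' one --- keeping the Schreier graph a forest --- is available. Everything after that is elementary, and the argument in fact produces the finite, symmetric Kazhdan set $\set{g_1^{\pm 1}, g_2^{\pm 1}}$ (strong property~(T)) alluded to in the introduction.
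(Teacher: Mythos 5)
Your argument is correct as far as I can check, but it is a genuinely different route from the paper's, and in fact it proves a strictly stronger statement. The paper does not produce a finite $Q$ at all: it builds a compact, totally disconnected $Q$ as the inverse limit of a finitely branching tree of finite partial automorphisms (constructed by back-and-forth using the same extension property you invoke), and then, \emph{adaptively in $f$}, selects a single branch $g \in Q$ so that at stage $n$ one of $f(g\cdot a_n)$, $f(g^{-1}\cdot a_n)$ is at most $2^{-(n+1)}$; summing gives $\nm{g\cdot f - f}_1 \geq 1/2$. This is softer: no single element of $Q$ works for all $f$, and nothing is claimed about free subgroups. You instead build two automorphisms by a fresh-target back-and-forth so that the labelled Schreier multigraph is a forest, conclude that $\langle g_1,g_2\rangle\cong \bF_2$ acts freely, and finish with the isoperimetric inequality for the $4$-regular tree; each step relies only on the extension property the paper itself states (equivalent to no algebraicity for homogeneous structures), the fact that adding an edge to an isolated vertex cannot create a cycle, loop or parallel edge, and the standard fact that non-backtracking closed walks do not exist in forests (do state the forest condition in the multigraph sense --- no loops and no parallel edges --- since that is exactly what makes reduced words give non-backtracking walks). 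Be aware, however, of what your strengthening means in the context of the paper: the existence of a \emph{finite} such $Q$ is precisely the paper's concluding open Question~(2), and in Theorem~\ref{th:Tconcrete} the author constructs free actions of $\bF_2$ only example by example (via orderability of $\bF_2$, random Cayley graphs, etc.). So either you have rediscovered the later resolution of that question (free actions of free groups do exist on countable structures with trivial algebraic closure, which is how the subsequent literature proceeds), or there is a subtle error; I could not find one, but you should present the forest invariant and the freshness of targets (chosen outside the finite set of all previously used points, which the extension property permits) with full care, since the entire weight of the proof rests there. What each approach buys: the paper's compact $Q$ suffices for property (T) (Kazhdan sets need only be compact) and avoids the free-action question entirely; your finite $Q$ gives a better constant ($1$ instead of $1/2$) and, combined with Proposition~\ref{p:KazhSet}, would upgrade Theorem~\ref{th:propT-gen} to strong property (T) with a two-element Kazhdan set.
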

\begin{proof}
We will use a back-and-forth construction. Enumerate $\bX = \set{a_1, a_2, \ldots}$ and set $\bA_n = \set{a_i : i \leq n}$ for $n \geq 0$. We will define inductively finite families $S_1, S_2, \ldots$ of finite partial automorphisms of $\bX$ with the following properties:
\begin{enumerate}
\item $S_1 = \set{\emptyset}$;
\item \label{i:unique-rest} if $\phi \in S_{n+1}$, there exists a unique $\phi' \in S_n$ such that $\phi \supseteq \phi'$;
\item \label{i:dom-ran} for every $\phi \in S_{2n}$, $\bA_n \sub \dom \phi$ and for every $\phi \in S_{2n+1}$, $\bA_n \sub \ran \phi$;
\item \label{i:inter} for every $\phi \in S_{2n} \cup S_{2n+1}$, $\dom \phi \cap \ran \phi \sub \bA_n$;
\item \label{i:cond-odd} for every $\phi \in S_{2n-1}$, there exist $\psi_1, \psi_2, \ldots, \psi_{2^{n+1}} \in S_{2n}$ such that $\psi_i \supseteq \phi$ for every $i$ and $\ran \psi_i \cap \ran \psi_j = \ran \phi$ for all $i \neq j$;
\item \label{i:cond-even} for every $\phi \in S_{2n}$, there exist $\psi_1, \psi_2, \ldots, \psi_{2^{n+1}} \in S_{2n+1}$ such that $\psi_i \supseteq \phi$ for every $i$ and $\dom \psi_i \cap \dom \psi_j = \dom \phi$ for all $i \neq j$.
\end{enumerate}
One can view the sets $S_n$ as the levels of a finitely splitting rooted tree.

Recall that for a homogeneous structure the no algebraicity assumption can be reformulated as the following extension property (see Hodges~\cite{Hodges1993}):
\begin{quotation}
for every pair of finite structures $\bA, \bB$, embeddings $\psi \colon \bA \to \bX$ and $\theta \colon \bA \to \bB$, and finite set $D \sub \bX$, there exists an embedding $\phi \colon \bB \to \bX$ such that $\phi \circ \theta = \psi$ and $\phi(\bB) \cap D \sub \psi(\bA)$.
\end{quotation}

Start the construction with $S_1 = \set{\emptyset}$. Suppose now that $S_{2n-1}$ has been constructed and proceed to build $S_{2n}$. For every $\phi \in S_{2n-1}$, we will construct a set $E_\phi$ of extensions of $\phi$ to put in $S_{2n}$. If $a_n \in \dom \phi$, set $E_\phi = \set{\phi}$. If $a_n \notin \dom \phi$, let $\bB = \dom \phi \cup \set{a_n}$ and construct $E_\phi = \set{\psi_1, \psi_2, \ldots, \psi_{2^{n+1}}}$ inductively as follows. Suppose $\psi_1, \ldots, \psi_k$ have been constructed and apply the extension property to the inclusion map $\dom \phi \to \bB$ and the embedding $\phi \colon \dom \phi \to \bX$ to obtain a partial automorphism $\psi_{k+1}$ with domain $\bB$ that extends $\phi$ and such that
\begin{equation} \label{eq:disj}
\ran \psi_{k+1} \cap \Big(\bigcup_{i \leq k} \ran \psi_i \cup \bB \Big) \sub \ran \phi.
\end{equation}
Finally, set $S_{2n} = \bigcup_{\phi \in S_{2n-1}} E_\phi$. To construct $S_{2n+1}$ apply the same procedure symmetrically (replacing $\phi$ with $\phi^{-1}$). Conditions \eqref{i:dom-ran}, \eqref{i:cond-odd}, \eqref{i:cond-even}, and the existence part of \eqref{i:unique-rest} are satisfied by construction. For the uniqueness part of \eqref{i:unique-rest}, note that any two distinct elements of $S_n$ are incomparable. Finally, suppose that condition \eqref{i:inter} is not verified and let $i$ be the least natural number such that there exists $\phi_i \in S_i$ and  $a \in (\dom \phi_i \cap \ran \phi_i) \sminus \bA_{\lfloor i/2\rfloor}$. We will consider the case when $i = 2n$ is even, the other one being similar. Denote by $\phi_{2n-1}$ the element in $S_{2n-1}$ such that $\phi_{2n-1} \sub \phi_{2n}$. By \eqref{eq:disj}, $\ran \phi_{2n} \cap \dom \phi_{2n} \sub \ran \phi_{2n-1}$, so $a \in \ran \phi_{2n-1}$. On the other hand, by construction, $\dom \phi_{2n} = \dom \phi_{2n-1} \cup \set{a_n}$. Hence $a \in \dom \phi_{2n-1}$, a contradiction with the minimality of $i$.

Now $\set{S_n}_n$ forms naturally an inverse system with the maps
\[
S_{n+1} \to S_n, \quad \phi \mapsto \text{the unique } \phi' \in S_n \text{ such that } \phi' \sub \phi.
\]
Denote by $Q$ the inverse limit, i.e.
\[
Q = \set{g \in G : \forall n \exists \phi \in S_n \ \phi \sub g}.
\]
(Every map in the inverse limit is a full automorphism of $\bX$ because of condition \eqref{i:dom-ran}.) As the sets $S_n$ are finite, $Q$ is compact.

We now check that for any $f \in \ell^1_+(\bX)$, there exists $g \in Q$ such that $\nm{g \cdot f - f}_1 \geq 1/2$. We build inductively a sequence $\phi_1, \phi_2, \ldots$ such that $\phi_i \in S_i$ and $\phi_i \sub \phi_{i+1}$. Let $\phi_1 = \emptyset$. Suppose that $\phi_{2n-1}$ is already chosen. If $a_n \in \dom \phi_{2n-1}$, set $\phi_{2n} = \phi_{2n-1}$. If $a_n \notin \dom \phi_{2n-1}$, by \eqref{i:cond-odd} and the fact that $\nm{f}_1 = 1$, there exists $\phi_{2n} \in S_{2n}$ such that
\begin{equation} \label{eq:phi-l}
f(\phi_{2n}(a_n)) \leq 2^{-(n+1)}.
\end{equation}
Now we proceed to choose $\phi_{2n+1}$. If $a_n \notin \dom \phi_{2n-1}$, let $\phi_{2n+1} \in S_{2n+1}$ be an arbitrary extension of $\phi_{2n}$. If $a_n \in \dom \phi_{2n-1}$, then by \eqref{i:inter}, $a_n \notin \ran \phi_{2n-1}$, so $a_n \notin \ran \phi_{2n}$ either (as in this case, $\phi_{2n} = \phi_{2n-1}$). Now, using \eqref{i:cond-even}, we can choose $\phi_{2n+1} \in S_{2n+1}$ so that
\begin{equation} \label{eq:phi-r}
f(\phi_{2n+1}^{-1}(a_n)) \leq 2^{-(n+1)}.
\end{equation}
In summary, we obtained that for every $n$ at least one of the inequalities \eqref{eq:phi-l} and \eqref{eq:phi-r} holds. Let $g = \bigcup_i \phi_i$. Combining \eqref{eq:phi-l} and \eqref{eq:phi-r}, we have that for every $n$,
\[
\min \big( f(a_n), f(g^{-1} \cdot a_n) \big) \leq 2^{-(n+1)}.
\]
Now a calculation yields:
\[ \begin{split}
\nm{g \cdot f - f}_1 &= \sum_{n=1}^\infty |f(g^{-1} \cdot a_n) - f(a_n)| \\
    &= \sum_{n=1}^\infty \max \big( f(a_n), f(g^{-1} \cdot a_n) \big)
        - \min \big( f(a_n), f(g^{-1} \cdot a_n) \big)\\
    &\geq \nm{f}_1 - \sum_{n=1}^\infty \min \big( f(a_n), f(g^{-1} \cdot a_n) \big) \\
    &\geq 1 - \sum_{n = 1}^\infty 2^{-(n+1)} = 1/2,
\end{split} \]
finishing the proof.
\end{proof}


Combining everything we have so far, we obtain the following.
\begin{theorem} \label{th:propT-gen}
Let $\bX$ be an $\omega$-categorical, relational, homogeneous structure with no algebraicity that admits weak elimination of imaginaries. Then $\Aut(\bX)$ has property (T).
\end{theorem}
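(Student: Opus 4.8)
The plan is to read off the theorem by combining the two main technical results established just above. Proposition~\ref{p:KazhSet} reduces property (T) for $\Aut(\bX)$ to producing a compact set $Q$ that witnesses a uniform non-invariance bound on $\ell^1_+(\bX_0)$, and Lemma~\ref{l:consrQ} supplies precisely such a set, with explicit constant $1/2$, under the assumptions that $\bX$ be homogeneous, relational, and have no algebraicity. The hypotheses of the theorem contain all of these, together with $\omega$-categoricity and weak elimination of imaginaries, so both results apply directly and nothing further is needed.

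In detail, I would first invoke Lemma~\ref{l:consrQ} to obtain a compact $Q \sub \Aut(\bX)$ with $\sup_{g \in Q} \nm{g \cdot f - f}_1 \geq 1/2$ for all $f \in \ell^1_+(\bX)$. Since $\bX_0 = \bX \sminus \set{a \in \bX : G \cdot a = a}$ is a $G$-invariant subset of $\bX$, every $f \in \ell^1_+(\bX_0)$ may be viewed, via extension by zero, as an element of $\ell^1_+(\bX)$, and $\nm{g \cdot f - f}_1$ does not depend on whether it is computed in $\ell^1(\bX_0)$ or in $\ell^1(\bX)$. Therefore
\[
\inf_{f \in \ell^1_+(\bX_0)} \, \sup_{g \in Q} \nm{g \cdot f - f}_1 \geq 1/2 > 0,
\]
which is exactly condition \eqref{eq:equivT} in Proposition~\ref{p:KazhSet}. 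That proposition then yields that $Q$ is a Kazhdan set for $\Aut(\bX)$, and since $Q$ is compact, $\Aut(\bX)$ has property (T).

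The main obstacle has, by this point, already been dealt with and packaged into the preliminary results: the reduction of an arbitrary representation to a sum of induced representations (Theorem~\ref{th:main-th}), the fact that under weak elimination of imaginaries every non-trivial irreducible embeds into some $\ell^2(\bX_0^n)$ (Proposition~\ref{p:subreps}), the passage from the $\ell^2$ to the $\ell^1$ setting together with stability under tensor powers (Lemma~\ref{l:nonam-prod} and Proposition~\ref{p:KazhSet}), and, above all, the back-and-forth construction in Lemma~\ref{l:consrQ}. Were one to attempt the theorem without these, the hard part would be Lemma~\ref{l:consrQ}: one must build, along a finitely-branching back-and-forth tree and using the no-algebraicity extension property, a compact family of automorphisms so abundant that no probability measure on $\bX$ can be almost-invariant under all of its members. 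Given that lemma, the argument above is just bookkeeping.
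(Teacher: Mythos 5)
Your proposal is correct and follows exactly the paper's route: the theorem is deduced by combining Lemma~\ref{l:consrQ} (which provides the compact set $Q$ with the uniform $1/2$ bound) with Proposition~\ref{p:KazhSet}, your extra remark about restricting from $\ell^1_+(\bX)$ to $\ell^1_+(\bX_0)$ via the $G$-invariance of $\bX_0$ being a small but valid piece of bookkeeping that the paper leaves implicit.
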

\begin{proof}
Follows from Proposition~\ref{p:KazhSet} and Lemma~\ref{l:consrQ}.
\end{proof}

As the next theorem shows, in many concrete examples, it is not difficult to find \emph{finite} Kazhdan sets. In fact, I do not know whether, in the setting of Theorem~\ref{th:propT-gen}, it is always possible to do so (cf. Question~\eqref{qi:non-amen}).
\begin{theorem} \label{th:Tconcrete}
All of the following groups have a Kazhdan set with two elements:
\[
S_\infty, \ \Aut(\Q), \ \GL(\infty, \F_q), \ \Homeo(2^\N), \ \Aut(\bR).
\]
\end{theorem}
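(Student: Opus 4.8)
The plan is to apply Proposition~\ref{p:KazhSet}: each of the five structures is $\omega$-categorical and admits weak elimination of imaginaries, so it is enough to produce, in each $G = \Aut(\bX)$, two elements $g_1, g_2$ with
\[
\inf_{f \in \ell^1_+(\bX_0)} \max\big(\nm{g_1 \cdot f - f}_1, \nm{g_2 \cdot f - f}_1\big) > 0
\]
(a two-element subset of $G$ is automatically compact). The strategy is to arrange that $g_1, g_2$ generate a free group $F$ of rank $2$ acting \emph{freely} on $\bX_0$, and to combine this with a single elementary estimate valid for $F$ regardless of where it sits.

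That estimate is: if $F = \langle a, b\rangle$ is free of rank $2$, then $\max\big(\nm{a\cdot f - f}_1, \nm{b \cdot f - f}_1\big) \geq 1/2$ for every $f \in \ell^1_+(F)$. To see it, partition $F \sminus \set{e}$ according to the first letter of the reduced word into $A_a^{+}, A_a^{-}, A_b^{+}, A_b^{-}$; since $a^{-1} A_a^{+} = \set{e} \cup A_a^{+} \cup A_b^{+} \cup A_b^{-}$, looking only at coordinates in $A_a^{+}$ gives $\nm{a\cdot f - f}_1 \geq (a \cdot f)(A_a^{+}) - f(A_a^{+}) = 1 - f(A_a^{-}) - f(A_a^{+})$, and symmetrically $\nm{b\cdot f - f}_1 \geq 1 - f(A_b^{-}) - f(A_b^{+})$; the four quantities $f(A_a^{\pm}), f(A_b^{\pm})$ add up to at most $1$, so one of the two right-hand sides is $\geq 1/2$. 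Now suppose $F$ acts freely on $\bX_0$; then $\bX_0$ is a disjoint union of orbits, each of which, as an $F$-set, is a copy of $F$ acting on itself by left translation. Since $\nm{g\cdot f - f}_1$ splits as a sum over orbits, the estimate transfers verbatim to $\ell^1_+(\bX_0)$ and shows that $Q = \set{g_1, g_2}$ works with constant $1/2$.

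It remains, in each case, to realize a rank-$2$ free group $F$ acting freely on $\bX_0$. For $S_\infty$ take the countable set $\bX$ to be $F$ with the left-translation action. For $\GL(\infty, \F_q)$ take $\bV$ to be the group algebra $\F_q[F]$ as the countably-infinite-dimensional $\F_q$-space, with $F$ acting by left multiplication; a non-zero finitely supported function cannot be fixed by a non-identity element of $F$ (its support would be a finite union of infinite orbits), so the action on $\bX_0 = \bV \sminus \set{0}$ is free. For $\Homeo(2^\N) = \Aut(B)$ let $F$ act on the Cantor set $\partial T$ of ends of its $4$-regular Cayley tree $T$; every non-identity $\gamma$ acts hyperbolically, with north--south dynamics, so if a clopen $A \ni \xi_+(\gamma)$ satisfied $\gamma(A) = A$, then (since $\gamma^n(\eta) \to \xi_+(\gamma)$ for every $\eta \neq \xi_-(\gamma)$ and $A$ is invariant) $A$ would contain every $\eta \neq \xi_-(\gamma)$, hence contain the dense set $\partial T \sminus \set{\xi_-(\gamma)}$, hence equal $\partial T$; thus the only $\gamma$-invariant clopen sets are $\emptyset$ and $\partial T$, i.e.\ $F$ acts freely on the countable atomless Boolean algebra $B$ of clopen subsets of $\partial T$ minus $\set{0, 1} = \bX_0$. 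For $\Aut(\bR)$ choose a random symmetric $S \sub F \sminus \set{e}$; a routine probabilistic argument shows that the Cayley graph of $(F, S)$ almost surely has the extension property characterizing the random graph, hence is isomorphic to $\bR$, and $F$ acts freely on its vertex set by left translation. Finally, for $\Aut(\Q)$, use that $F$ admits a left-invariant linear order that is dense and without endpoints; then $(F, \prec) \cong (\Q, <)$ and left translation realizes $F$ inside $\Aut(\Q)$ acting freely (alternatively: embed $F$ in $\Homeo^+(\R)$ with a free action and restrict to a countable dense $F$-invariant subset, which is order-isomorphic to $\Q$).

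The main obstacle is the $\Aut(\Q)$ case: unlike the other four, it has no transparent model of a free $F_2$-action, and it rests on the existence of a dense left-invariant order on $F_2$ (equivalently, a free action of $F_2$ on the line) --- a standard but genuinely non-trivial fact about orderable groups. The other point requiring care is the $\Homeo(2^\N)$ argument: one must confirm that north--south dynamics excludes \emph{every} non-trivial invariant clopen set, not merely those assembled from cones around the two fixed ends.
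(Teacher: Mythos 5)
Your proposal follows the paper's strategy essentially verbatim: apply Proposition~\ref{p:KazhSet} and, in each of the five groups, exhibit a copy of $\bF_2$ acting freely on $\bX_0$. The paper at this point simply invokes the well-known fact that any free action of $\bF_2$ is non-amenable, whereas you prove it directly with the explicit constant $1/2$ via the first-letter partition and summing over orbits; this is correct and makes the argument a bit more self-contained (the paper relegates explicit Kazhdan constants to a remark citing Bekka). Two of your realizations differ in detail: for $\Homeo(2^\N)$ you act on the ends of the Cayley tree and use north--south dynamics of hyperbolic elements, instead of the paper's shift action $\bF_2 \actson \Clopen(2^{\bF_2})$ with the finite-support argument --- both are correct, and your invariant-clopen-set argument is complete. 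For $\Aut(\Q)$ your main route (a dense left-invariant order on $\bF_2$ without endpoints, then left translation) is exactly the paper's; the paper additionally supplies the proof you only cite, by taking a bi-invariant order and showing any bi-invariant order on $\bF_2$ is automatically dense without endpoints. One caveat: your parenthetical alternative for $\Aut(\Q)$ --- embed $\bF_2$ in $\Homeo^+(\R)$ acting \emph{freely} and restrict to a countable dense invariant set --- cannot work as stated, since by H\"older's theorem a group acting freely on $\R$ by orientation-preserving homeomorphisms is abelian; what does exist is a faithful action with a free dense orbit, which is just the left-order construction again. Since this was offered only as an alternative, it does not affect the validity of your proof.
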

\begin{proof}
In view of Proposition~\ref{p:KazhSet}, to prove that the automorphism group $G$ of an $\omega$-categorical structure $\bX$ with weak elimination of imaginaries admits a finite Kazhdan set, it suffices to find a finitely-generated subgroup $\Gamma \leq G$ such that the action $\Gamma \actson \bX_0$ is \df{non-amenable} (i.e. \eqref{eq:equivT} holds with $Q$ a generating set for $\Gamma$). In fact, for all of the above groups we will find a copy of the free group $\bF_2$ in $G$ that acts freely on $\bX_0$. As it is well-known that any free action of $\bF_2$ is non-amenable, this will complete the proof.

$S_\infty$. Consider the left action of $\bF_2$ on itself. This gives a copy of $\bF_2$ in the group of all permutations of $\bF_2$ that acts freely.

$\Aut(\Q)$. It suffices to find an ordering on the group $\bF_2$ which is left-invariant and isomorphic to $\Q$. Then the left action of $\bF_2$ on itself will produce the required embedding $\bF_2 \hookrightarrow \Aut(\Q)$. It is well known that $\bF_2$ admits a \df{bi-invariant} (invariant under multiplication on both sides) linear ordering (see, for example, \cite{Rolfsen2001u}). We now check that any bi-invariant ordering on $\bF_2$ is dense without endpoints. First, if $x > 1$, then $x^2 > x$ and if $x < 1$, $x^2 < x$, showing that the ordering has no endpoints. To see that it is dense, it suffices, for every $x > 1$, to find $z$ such that $1 < z < x$. By bi-invariance, all conjugates of $x$ are $> 1$. Let now $y$ be an arbitrary element that does not commute with $x$. If $x^y < x$, we are done and if $x^y > x$, by conjugating with $y^{-1}$, we obtain that $x^{y^{-1}} < x$.

$\GL(\infty, \F_q)$. Label a basis of the vector space by the elements of $\bF_2$ and let $\bF_2$ act freely on this basis in the natural way. This action extends to an action on the whole vector space. As $\bF_2$ is torsion-free, no finite, non-empty subset of $\bF_2$ is invariant under any non-identity element of the group, so the support of every non-zero vector is moved by every non-trivial element of $\bF_2$, showing that the action is free (when restricted to the non-zero elements of the vector space).

$\Homeo(2^\N)$. Consider the natural shift action $\bF_2 \actson 2^{\bF_2}$ given by $(g \cdot \omega)(h) = \omega(g^{-1}h)$. Since this action is by homeomorphisms, it gives an action $\bF_2 \actson \Clopen(2^{\bF_2})$. Using a similar argument to the one above, we see that its restriction to $\Clopen(2^{\bF_2}) \sminus \set{\emptyset, 2^{\bF_2}}$ is free.

$\Aut(\bR)$. Consider a \df{random right Cayley graph} of the free group constructed in the following way. Let $S$ be a random set of unordered pairs $\set{g, g^{-1}}$ of elements of $\bF_2 \sminus \set{1}$, i.e. each pair is included or not in $S$ independently with probability $1/2$. The vertices of the graph are the elements of $\bF_2$ and two vertices $x, y \in \bF_2$ are connected by an edge iff $x^{-1}y \in S$. By Cameron~\cite{Cameron2000}, the random Cayley graph of $\bF_2$ is isomorphic to $\bR$ with probability $1$, so in particular, Cayley graphs of $\bF_2$ isomorphic to $\bR$ exist. As $\bF_2$ acts freely by isomorphisms on any of its Cayley graphs, we obtain the desired embedding $\bF_2 \hookrightarrow \Aut(\bR)$.
\end{proof}

\begin{remark*} It is also possible, using the method of Bekka~\cite{Bekka2003}, to find optimal Kazhdan constants for those Kazhdan sets. In fact, the constant is the same as in \cite{Bekka2003}.
\end{remark*}

\begin{remark*}
We note that not all oligomorphic groups have the strong property (T). Indeed, the group in Example~\eqref{i:ex:Cherlin} admits $(\Z / 2\Z)^\N$ as a quotient and the latter does not admit a finite Kazhdan set by \cite{Bekka2003}*{Proposition~5} (this can also easily be seen directly).
\end{remark*}

\begin{remark*}
As was already noted in \cite{Bekka2003}, there is no special connection between property (T) and amenability for non-locally compact groups. Of the groups in Theorem~\ref{th:Tconcrete}, all are amenable except $\Homeo(2^\N)$. (A topological group is called \df{amenable} if every time it acts continuously on a compact space, there is an invariant measure.)
\end{remark*}

\begin{remark*}
It is an open problem whether there exists a discrete subgroup of $\Aut(\Q)$ with property (T). (A discrete group embeds in $\Aut(\Q)$ iff it acts faithfully by orientation-preserving homeomorphisms on the reals iff it is left-orderable; see Morris~\cite{Morris1994}). I am grateful to the referee for pointing this out.
\end{remark*}

We conclude with two open problems.

\smallskip \noindent \textbf{Questions:}
\begin{enumerate}
\item Does every closed oligomorphic subgroup of $S_\infty$ have property (T)? More generally, does every Roelcke precompact Polish group have property (T)?


\item \label{qi:non-amen} Suppose that $\bX$ is an $\omega$-categorical, relational, homogeneous structure with no algebraicity. Is it always the case that the action $\Aut(\bX) \actson \bX$ is non-amenable, i.e. must there always exist a \emph{finite} set $Q \sub \Aut(\bX)$ such that for every $f \in \ell^1_+(\bX)$,
\[
\sup_{g \in Q} \nm{g \cdot f - f}_1 > 0?
\]
\end{enumerate}

\bibliography{mybiblio}
\end{document}